\title[Bogomolov-Gieseker type inequality]{{\bf Bogomolov-Gieseker type inequality and 
counting invariants}}
\date{}
\author{Yukinobu Toda}
\DeclareFontFamily{U}{rsfs}{%
\skewchar\font127}
\DeclareFontShape{U}{rsfs}{m}{n}{%
<-6>rsfs5<6-8.5>rsfs7<8.5->rsfs10}{}
\DeclareSymbolFont{rsfs}{U}{rsfs}{m}{n}
\DeclareRobustCommand*\rsfs{%
\@fontswitch\relax\mathrsfs}
\theoremstyle{plain}
\newtheorem{thm}{Theorem}[section]
\newtheorem{prop}[thm]{Proposition}
\newtheorem{lem}[thm]{Lemma}
\newtheorem{defi}[thm]{Definition}
\newtheorem{rmk}[thm]{Remark}
\newtheorem{cor}[thm]{Corollary}
\newtheorem{prop-defi}[thm]{Proposition-Definition}
\newtheorem{thm-defi}[thm]{Theorem-Definition}
\newtheorem{lem-defi}[thm]{Lemma-Definition}
\newtheorem{conj}[thm]{Conjecture}
\newtheorem{exam}[thm]{Example}
\newdimen\argwidth
\def\db[#1\db]{
 \setbox0=\hbox{$#1$}\argwidth=\wd0
 \setbox0=\hbox{$\left[\box0\right]$}
  \advance\argwidth by -\wd0
 \left[\kern.3\argwidth\box0 \kern.3\argwidth\right]}
\newcommand{\aA}{\mathcal{A}}
\newcommand{\bB}{\mathcal{B}}
\newcommand{\eE}{\mathcal{E}}
\newcommand{\fF}{\mathcal{F}}
\newcommand{\hH}{\mathcal{H}}
\newcommand{\lL}{\mathcal{L}}
\newcommand{\mM}{\mathcal{M}}
\newcommand{\oO}{\mathcal{O}}
\newcommand{\sS}{\mathcal{S}}
\newcommand{\tT}{\mathcal{T}}
\newcommand{\uU}{\mathcal{U}}
\newcommand{\xX}{\mathcal{X}}
\newcommand{\yY}{\mathcal{Y}}
\newcommand{\zZ}{\mathcal{Z}}
\newcommand{\Supp}{\mathop{\rm Supp}\nolimits}
\newcommand{\Hom}{\mathop{\rm Hom}\nolimits}
\newcommand{\dR}{\mathbf{R}}
\newcommand{\Hilb}{\mathop{\rm Hilb}\nolimits}
\newcommand{\Pic}{\mathop{\rm Pic}\nolimits}
\newcommand{\ch}{\mathop{\rm ch}\nolimits}
\newcommand{\td}{\mathop{\rm td}\nolimits}
\newcommand{\Ext}{\mathop{\rm Ext}\nolimits}
\newcommand{\Coh}{\mathop{\rm Coh}\nolimits}
\newcommand{\cneq}{\mathrel{\raise.095ex\hbox{:}\mkern-4.2mu=}}
\newcommand{\eqcn}{\mathrel{=\mkern-4.5mu\raise.095ex\hbox{:}}}
\newcommand{\Aut}{\mathop{\rm Aut}\nolimits}
\newcommand{\DT}{\mathop{\rm DT}\nolimits}
\newcommand{\Imm}{\mathop{\rm Im}\nolimits}
\begin{document}
\maketitle
\begin{abstract}
We study a conjectural relationship 
among Donaldson-Thomas type invariants 
on Calabi-Yau 3-folds
counting 
torsion sheaves supported on 
ample divisors,
ideal sheaves of curves and 
Pandharipande-Thomas's stable pairs. 
The conjecture is a mathematical formulation 
of Denef-Moore's
formula derived in the study of 
Ooguri-Strominger-Vafa's conjecture 
relating black hole entropy and 
topological string.
The main result of this paper is to prove our conjecture 
assuming a conjectural Bogomolov-Gieseker type 
inequality proposed by Bayer, Macri and the author.   
\end{abstract}
\section{Introduction}
For a smooth projective Calabi-Yau 3-fold $X$, 
the Donaldson-Thomas (DT) invariant is 
introduced in~\cite{Thom}
as a holomorphic analogue of Casson invariants 
on real 3-manifolds. It counts 
 (semi)stable coherent sheaves on 
$X$, and its rank one theory 
is conjectured to be equivalent 
to the Gromov-Witten theory on $X$
by Maulik-Nekrasov-Okounkov-Pandharipande (MNOP)~\cite{MNOP}. 
Rather recently, wall-crossing phenomena 
of DT type invariants has drawn much 
attention, and its general theory 
is established by Joyce-Song~\cite{JS}, 
Kontsevich-Soibelman~\cite{K-S}. 
Applying the wall-crossing formula
to rank one DT type invariants, some
geometric applications related to 
MNOP conjecture have been obtained. 
 (cf.~\cite{BrH},~\cite{StTh},~\cite{Tcurve1},~\cite{Tolim2}.)

In this paper, we focus on
DT invariants 
counting 
torsion sheaves supported on ample 
divisors in 
$X$, and discuss their relationship 
to rank one DT type invariants
via wall-crossing phenomena. 
In string theory, counting 
sheaves on sufficiently ample 
divisors is interesting since it
is related to Strominger-Vafa's black hole entropy 
in terms of D-brane microstates~\cite{SV}. 
There are several physics
articles in which such 
sheaves or counting invariants 
are discussed,
(cf.~\cite{DM}, \cite{AM}, \cite{GGHS05}, \cite{GSY06}, \cite{DM07},)
while there has been no pure mathematical 
treatment of this subject. 

The 
work of this paper is 
motivated by Denef-Moore's 
approach~\cite{DM}
toward Ooguri-Strominger-Vafa
(OSV)
conjecture~\cite{OSV} relating black hole entropy
and topological string
on Calabi-Yau 3-folds. 
The idea of Denef-Moore~\cite{DM} is 
to investigate the decay of \rm{D}4 branes 
wrapping ample divisors in $X$ into 
D6-anti-D6 bound states on $X$. 
Through some physical arguments, 
they derive a formula relating 
indices of BPS 
\rm{D}4 branes on $X$ to those of 
D6-anti-D6 bound states on $X$.   
Roughly speaking, their formula is written as
\begin{align}\label{D4-D6}
\zZ_{\rm{D}4} \sim \zZ_{\rm{D}6-\overline{\rm{D}6}}
\end{align}
where the LHS and RHS are the 
D4-brane 
partition function, 
D6-anti-D6 bound state
partition function respectively. 
The $`\sim'$ in (\ref{D4-D6}) means that both sides are 
`approximated' in some sense. 
The purpose of this paper is 
summarized as follows: 
\begin{itemize}
\item
We formulate the relationship (\ref{D4-D6})
as a mathematically precise conjecture
in terms of 
DT type invariants.
\item We prove 
the above conjectural formula 
assuming
a conjectural  
Bogomolov-Gieseker type inequality
for \textit{tilt semistable objects}
in the derived category of coherent sheaves, 
proposed by Bayer, Macri and the author~\cite{BMT}. 
\end{itemize}
More precisely, the mathematical counterpart 
of the LHS of (\ref{D4-D6})
is
the generating series of DT invariants 
counting torsion sheaves supported on ample 
divisors in $X$, the RHS of (\ref{D4-D6})
is a certain generating series of 
the products of rank one DT invariants and 
Pandharipande-Thomas (PT) stable 
pair invariants~\cite{PT}. 
The wall-crossing phenomena of the 
tilt stability in~\cite{BMT}
is relevant to show the relationship (\ref{D4-D6}). 
In the proof, we will see how the 
conjectural Bogomolov-Gieseker type inequality
in~\cite{BMT} is effectively applied.

\subsection{Donaldson-Thomas invariants}
Let $X$ be a smooth projective Calabi-Yau 
3-fold over $\mathbb{C}$, i.e. 
\begin{align*}
\bigwedge^{3} T_{X}^{\vee} \cong \oO_X, \quad
H^1(X, \oO_X)=0. 
\end{align*}
Given an element, 
\begin{align*}
(r, D, \beta, n) 
\in H^0(X) \oplus H^2(X) \oplus H^4(X) \oplus H^6(X)
\end{align*}
and an ample divisor $H$ in $X$, 
we have the associated DT
invariant~\cite{Thom}, \cite{JS}, \cite{K-S}, 
\begin{align}\label{intro:DT}
\DT_H(r, D, \beta, n) \in \mathbb{Q}.
\end{align}
The invariant (\ref{intro:DT}) counts 
$H$-semistable coherent sheaves $E$ 
on $X$ satisfying 
\begin{align*}
\ch(E)
=(r, D, \beta, n). 
\end{align*} 
We are interested in
DT invariants in the following two 
cases: 

(i) $r=0$ and $D=mH$ for $m\in \mathbb{Z}_{>0}$. 
In this case, the invariant
\begin{align}\label{intro:(i)}
\DT_H(0, mH, -\beta, -n) \in \mathbb{Q}
\end{align}
counts 
$H$-semistable torsion 
sheaves supported on 
some ample divisor $P\subset X$.  
In string theory, such sheaves 
correspond to \rm{D}4-branes wrapping a divisor $P$.

(ii) $r=1$ and $D=0$. 
In this case, the invariant 
\begin{align}\label{intro:I}
I_{n, \beta} \cneq \DT_H(1, 0, -\beta, -n) \in \mathbb{Z}
\end{align}
counts
ideal 
sheaves of subschemes $C \subset X$, satisfying 
\begin{align*}
[C]=\beta, \quad \chi(\oO_C)=n. 
\end{align*}
Here $\beta$ and $n$ are interpreted as elements 
of $H_2(X, \mathbb{Z})$, $H_0(X, \mathbb{Z}) \cong \mathbb{Z}$
 respectively by Poincar\'e duality.
In string theory, such sheaves correspond to 
\rm{D}6-branes.  
The invariants (\ref{intro:I})
count curves in $X$, and their generating series
is expected to be coincide with the generating series of 
Gromov-Witten invariants of $X$
after some variable change by MNOP~\cite{MNOP}. 

The
Pandharipande-Thomas~\cite{PT} 
stable pair invariants
also count curves in $X$, which 
are closely related to the invariants (\ref{intro:I}). 
For $\beta \in H_2(X, \mathbb{Z})$ and $n\in \mathbb{Z}$, 
the PT invariant is denoted by 
\begin{align}\label{intro:PT}
P_{n, \beta} \in \mathbb{Z}. 
\end{align}
The objects which contribute to the invariants
(\ref{intro:PT})
are not necessary sheaves but
two term complexes of the form 
\begin{align}\label{intro:twoterm}
\cdots \to 0 \to \oO_X \stackrel{s}{\to} F \to 0 \to \cdots
\end{align} 
where $F$ is a pure one dimensional sheaf
satisfying 
\begin{align*}
[F]=\beta, \quad \chi(F)=n
\end{align*}
and $s$ is surjective in dimension one. 
In~\cite{PT}, the generating series of the invariants (\ref{intro:PT})
is conjectured to be related to that of (\ref{intro:I}) 
via wall-crossing phenomena 
in the derived category. 
This conjecture, called DT/PT correspondence, 
 is proved 
in~\cite{StTh},~\cite{Tcurve1}
at the Euler characteristic level and in~\cite{BrH} for 
the honest DT invariants. 

\subsection{Conjecture}
We formulate a conjecture relating invariants (\ref{intro:(i)}), 
(\ref{intro:I})
and (\ref{intro:PT}) in terms of generating 
series, following the idea of~\cite{DM}.
In what follows, we fix an ample divisor $H$ 
in $X$. 

The generating series of the invariants (\ref{intro:(i)}), 
i.e. D4-brane counting, is defined by  
\begin{align}\label{Z4}
\zZ_{\rm{D}4}^{m}(x, y) \cneq 
\sum_{\beta, n} \DT_H(0, mH, -\beta, -n) x^n y^{\beta}. 
\end{align}
As for the generating series of 
the invariants (\ref{intro:I}), (\ref{intro:PT}), 
we first define the 
following \textit{cut off} generating series: 
\begin{align*}
I^{m, \epsilon}(x, y) &\cneq 
\sum_{(\beta, n) \in C(m, \epsilon)} I_{n, \beta}x^n y^{\beta} \\
P^{m, \epsilon}(x, y) &\cneq 
\sum_{(\beta, n) \in C(m, \epsilon)} P_{n, \beta}x^n y^{\beta}.
\end{align*}
Here $m\in \mathbb{Z}_{>0}$, $\epsilon \in \mathbb{R}_{>0}$, and 
$C(m, \epsilon)$ is defined to be
\begin{align*}
C(m, \epsilon) \cneq \{ (\beta, n) \in H_2(X) \oplus H_0(X) :
\beta \cdot H < \epsilon m^2, \ \lvert n \rvert < \epsilon m^3\}.
\end{align*}
Following~\cite[Equation~(6.94)]{DM}, 
the generating series related to the RHS of (\ref{D4-D6}) 
is defined by 
\begin{align}\notag
\zZ_{\rm{D}6-\overline{\rm{D}6}}^{m, \epsilon}(x, y, z) \cneq 
&\sum_{D_2-D_1=mH}
x^{\frac{1}{6}(D_1^3-D_2^3)}
y^{\frac{1}{2}(D_1^2-D_2^2)}
z^{\frac{H^3}{6}m^3 + \frac{H \cdot c_2(X)}{12}m} \\
&\label{intro:ZD6} \quad I^{m, \epsilon}(xz^{-1}, x^{D_2}y z^{-mH})
P^{m, \epsilon}(x z^{-1}, x^{-D_1}y^{-1} z^{-mH}). 
\end{align}
Here in the sum (\ref{intro:ZD6}),
$D_1$, $D_2$ are divisors in $X$.  
The
relationship (\ref{D4-D6}) 
can be formulated as (\ref{intro:D4=D6}) in the following conjecture: 
\begin{conj}\label{intro:conj}

(i) The invariant $\DT_H(0, mH, -\beta, -n)$
vanishes  
unless 
\begin{align}\notag
-\frac{H^3}{24}m^3 \le 
n+\frac{(\beta \cdot H)^2}{2mH^3}. 
\end{align}

(ii) For any $\xi \ge 1$, there are 
$\mu>0$, $\delta>0$ and a constant 
$m(\xi, \mu)>0$ which 
depends only on $\xi$, $\mu$ such that
for any $m>m(\xi, \mu)$,  
we have the equality of the generating series,
\begin{align}\label{intro:D4=D6}
\zZ_{\rm{D}4}^{m}(x, y)
= \left. \frac{\partial}{\partial z} \zZ_{\rm{D}6-\overline{\rm{D}6}}^{m, \epsilon=\frac{\delta}{m^{\xi}}}(x, y, z) \right|_{z=-1}
\end{align}
modulo terms of $x^n y^{\beta}$
with 
\begin{align}\notag
-\frac{H^3}{24}m^3
 \left(1-\frac{\mu}{m^{\xi}} \right)
\le n +\frac{(\beta \cdot H)^2}{2mH^3}.
\end{align}
\end{conj}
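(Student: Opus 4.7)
The plan is to realize Conjecture (i) as a direct consequence of the conjectural Bogomolov--Gieseker (BG) inequality of \cite{BMT} applied to a D4-brane sheaf, and to prove Conjecture (ii) by transporting such an object across a single wall in the two-parameter tilt stability space of \cite{BMT}, converting it into a D6--anti-D6 bound state. The cutoff region $C(m, \epsilon)$ with $\epsilon = \delta/m^{\xi}$ and the strip factor $(1 - \mu/m^{\xi})$ in the target modular region should be calibrated so that the BG inequality alone suffices to discard every wall-crossing contribution we do not want.

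For part (i), I would first observe that an $H$-semistable pure two-dimensional torsion sheaf $E$ with $\ch(E) = (0, mH, -\beta, -n)$ becomes tilt semistable, in the sense of \cite{BMT}, for a range of parameters $(s,t)$ in the upper half plane. Applying the conjectural BG-type inequality of \cite{BMT} to $E$ then yields a cubic bound on $\ch_3(E) = -n$ in terms of $\ch_{\le 2}(E)$ measured against $H$. Rearranging that bound gives exactly $-H^3 m^3/24 \le n + (\beta \cdot H)^2/(2 m H^3)$ as claimed. A secondary point here is to confirm that the relevant chamber is nonempty and sees every $H$-semistable sheaf with the given Chern character, so that no stability condition walls are crossed before the inequality is applied.

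For part (ii), the core observation is that on a suitable wall of tilt stability a generic D4 object $E$ fits into a distinguished triangle
\[
F_1 \to E \to F_2[1],
\]
where $F_1$ has rank one with $c_1(F_1) = -D_1$, $F_2[1]$ has rank $-1$ with $c_1(F_2) = -D_2$, and $D_2 - D_1 = mH$. After twisting by $\oO(D_1)$ and $\oO(D_2)$ respectively, $F_1$ becomes an ideal sheaf contributing to some $I_{n_1, \beta_1}$ while $F_2$ becomes a PT stable pair complex contributing to some $P_{n_2, \beta_2}$. The monomial prefactors $x^{(D_1^3-D_2^3)/6} y^{(D_1^2-D_2^2)/2} z^{H^3 m^3/6 + H \cdot c_2(X) m/12}$ in (\ref{intro:ZD6}) arise from Riemann--Roch applied to these twists (the $H\cdot c_2(X)/12$ coming from the Todd class of $X$), while the operation $\partial_z|_{z=-1}$ extracts the bound-state count from the formal product of the rank $\pm 1$ partition functions. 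The Joyce--Song / Kontsevich--Soibelman wall-crossing formula applied across this wall then converts the $\DT$ count of $E$ into the claimed bilinear expression.

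The main obstacle will be error control in part (ii): one must show that summands $F_1, F_2$ with Chern characters falling outside the cutoff region $C(m, \epsilon)$ either do not arise as subobjects in the wall-crossing, or else contribute only to $\ch_3$ values outside the target strip. This should reduce to a quantitative refinement of the BG inequality: whenever a summand's $\ch_1 \cdot H^2$ or $|\ch_3|$ exceeds the $\epsilon$-threshold, its BG defect must be at least of order $H^3 m^{3-\xi} \mu / 24$. Propagating this quantitative deficit through the Hall algebra identities used to derive the wall-crossing formula, uniformly in the decomposition, is where the bulk of the technical work will lie, and it is the step that genuinely uses the \emph{strength} (not just the existence) of the conjectural BG bound of \cite{BMT}.
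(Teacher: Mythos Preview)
Your strategy is broadly the paper's, but your picture of part (ii) has a structural error. You describe crossing ``a single wall'' to convert a D4 object into a D6--anti-D6 pair. In the paper's argument one fixes $B = -\frac{\beta\cdot H}{mH^3}H$ (this normalization makes $\ch_2^B(v)\cdot H = 0$, so $\nu_t(v)=0$ for every $t$) and varies $\omega = tH$ along a ray. There are in general \emph{many} walls on this ray; the key points are rather that (a) for $t < \frac{\sqrt{3}}{2}m\sqrt{1-\eta}$ the tilt-semistable moduli space with class $v$ is empty --- this is a second, independent application of the BG conjecture, not merely of the semistability of the original sheaf --- so summing \emph{all} wall-crossing contributions recovers $\DT_H(v)$; and (b) at every wall the Harder--Narasimhan filtration has length exactly two, with factors of rank $-1$ and $+1$. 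Step (b) is where the quantitative strength of the BG inequality is actually spent: combined with the already-proven Bogomolov inequality for tilt-semistable objects, one excludes length-$3$ filtrations and factors with $\lvert r_i\rvert\ge 2$ by showing they would force $\eta$ too large. One must then separately argue that these rank-$\pm 1$ factors cross no further walls of their own before reaching the large-volume chamber, so that they genuinely are twists of ideal sheaves and dualized stable pairs.

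Your account of the error control is also off. The paper does not bound a ``BG defect'' of out-of-range summands and propagate it through Hall-algebra identities. Instead, under the hypothesis $0\le \eta < \min\{3/(2m), 1/2\}$ it proves directly that \emph{every} wall-crossing factor already satisfies $(\beta_i, n_i)\in C(m, \eta H^3/2)$. The cutoff in the D6--anti-D6 series is therefore not an approximation with controlled remainder but an exact enumeration of the factors that occur at all; the ``modulo'' clause in the statement reflects only the restriction $\eta < \mu/m^{\xi}$ on which Chern classes $v$ the argument applies to.
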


\begin{rmk}
If $\Pic(X)$ is generated by $\oO_X(H)$, then 
$\mu$, $\delta$ may be taken as follows: 
if $\xi >1$, then $\mu$ is any
positive real number and if $\xi=1$, 
then $\mu$ is any real number satisfying 
$0<\mu< 3/2$. In both cases, $\delta$
is taken to be $\delta=\mu H^3/2$. 
See Theorem~\ref{thm:goal} for the detail. 
\end{rmk}

The formula (\ref{intro:D4=D6}) 
is a mathematical 
formulation of~\cite[Equation~(6.101)]{DM}, 
which plays an important role in the
study of the OSV conjecture~\cite{OSV}
in~\cite{DM}. 
The OSV conjecture states that, 
\begin{align}\label{OSV:physics}
\zZ_{\rm{BH}} \sim \lvert \zZ_{\rm{top}} \rvert^2
\end{align}
where the LHS is the
black hole partition function
and the RHS is the topological 
string partition function. Although 
a mathematically precise formulation of 
the OSV conjecture is not yet available, 
the formula (\ref{intro:D4=D6})
may give an intuition
of the relationship 
(\ref{OSV:physics}): 
the LHS of (\ref{intro:D4=D6})
counts massive D-branes for $m\gg 0$, 
hence it has to do with black holes.
On the other hand, 
the RHS of (\ref{intro:D4=D6})
involves products of DT type 
curve counting, hence
the square of the topological string, 
via MNOP conjecture~\cite{MNOP}.

\subsection{Main result}
Our main result is to 
prove Conjecture~\ref{intro:conj}, 
assuming two mathematical  
conjectures which are 
not yet proven. One of them is 
the conjectural Bogomolov-Gieseker type inequality
for tilt (semi)stable objects proposed in~\cite{BMT}. 
The other one is the conjectural property on the 
local moduli space of objects in the derived category. 

The former conjecture is stated as follows: let 
us take $B, \omega \in H^2(X, \mathbb{Q})$
so that $\omega$ is an ample $\mathbb{Q}$-divisor. 
Then we can construct the tilt of $\Coh(X)$, 
\begin{align*}
\bB_{B, \omega} \subset D^b \Coh(X)
\end{align*}
which is a certain abelian subcategory in the 
derived category of coherent sheaves. 
(cf.~Subsection~\ref{subsec:tilt}.)
In~\cite{BMT}, we constructed the slope function
$\nu_{B, \omega}$ on $\bB_{B, \omega}$ to be
\begin{align*}
\nu_{B, \omega}(E) \cneq \left\{ 
\begin{array}{cc}
\frac{\Imm Z_{B, \omega}(E)}
{\omega^2 \ch^{B}_1(E)}, & \omega^2 \ch^{B}_1(E) \neq 0 \\
\infty, & \omega^2 \ch^{B}_1(E)=0. 
\end{array}
\right. 
\end{align*}
Here $\ch^{B}(E)$ is the twisted Chern 
character $\ch^{B}(E) \cneq e^{-B}\ch(E)$, 
and $Z_{B, \omega}(E)$ is the central charge 
near the large volume limit in terms of string theory, 
\begin{align*}
Z_{B, \omega}(E) \cneq -\int_{X} e^{-i\omega}\ch^{B}(E). 
\end{align*}
The above slope function determines $\nu_{B, \omega}$-stability 
on $\bB_{B, \omega}$, which was called tilt stability in~\cite{BMT}. 
The following is the main conjecture in~\cite{BMT}. 
\begin{conj}{\bf \cite[Conjecture~1.3.1]{BMT}}
\label{conj:BG:intro}
For any tilt semistable object $E \in \bB_{B, \omega}$
with $\nu_{B, \omega}(E)=0$, the following inequality holds:
\begin{align}\label{intro:ineq}
\ch^{B}_3(E) \le \frac{1}{18}\omega^2 \ch^{B}_1(E).
\end{align}
\end{conj}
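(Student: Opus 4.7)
The plan is to attack Conjecture~\ref{conj:BG:intro} by wall-crossing in the space of tilt stability conditions, reducing the general case to limit regimes where the classical Bogomolov-Gieseker inequality on $X$ suffices. The three ingredients would be reduction to tilt-\emph{stable} objects, variation of $(B,\omega)$ along the wall $\nu_{B,\omega}=0$, and verification at a distinguished boundary of the chamber structure.

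First I would reduce to the case that $E$ is tilt-stable. Since $\bB_{B,\omega}$ is abelian and both $\omega^2 \ch_1^B$ and $\ch_3^B$ are additive on short exact sequences, a Jordan-H\"older filtration of a tilt-semistable $E$ with $\nu_{B,\omega}(E)=0$ splits the problem into pieces with either $\nu_{B,\omega}(F_i)=0$ or $\omega^2\ch_1^B(F_i)=0$; proving the inequality for each kind of piece suffices. For a tilt-stable object with $\omega^2\ch_1^B\neq 0$, I would deform $(B,\omega)$ along the real codimension-one locus $\{\nu_{B,\omega}=0\}$. Tilt-stability is locally constant away from numerical walls, whose local finiteness follows from the classical Bogomolov-type inequality $\Delta(E) = \ch_1(E)^2 - 2\ch_0(E)\ch_2(E) \ge 0$ for tilt-stable objects established in~\cite{BMT}. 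At a generic wall $E$ would break into tilt-stable pieces of strictly smaller $\Delta$, permitting an induction; in a limit such as $\omega\to\infty$ subject to $\nu_{B,\omega}=0$ one expects $E$ to degenerate either to a shift of a torsion-free Gieseker-semistable sheaf or to a sheaf supported on a divisor, and in these cases the desired bound on $\ch_3^B$ should follow by combining the classical Bogomolov-Gieseker inequality on $X$ with the Hodge index theorem on the support.

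The main obstacle, and the reason this inequality remains conjectural, is that $\ch_3$ is genuinely new three-dimensional data not controlled by any of the classical surface-level Bogomolov-type statements: the constant $1/18$ is sharp, and producing it requires a strictly three-dimensional input beyond $\Delta\ge 0$. Even after a successful deformation to a boundary configuration, the `infinite-slope' tilt-stable objects with $\omega^2\ch_1^B=0$ must be analyzed separately, and at a Jordan-H\"older decomposition the sum $\sum\ch_3^B(F_i)$ has to be bounded by $\sum\tfrac{1}{18}\omega^2\ch_1^B(F_i)$ where individual terms on the right can vanish, making the combinatorics of wall-crossing delicate. I expect the hardest step to be the production of a three-dimensional classical inequality strong enough to serve as the base case of the induction; this is precisely the obstacle that has so far confined complete proofs to special settings such as Fano and abelian threefolds, and is the reason the present paper proceeds instead by \emph{assuming} Conjecture~\ref{conj:BG:intro} and extracting its consequences for the D4-D6 correspondence.
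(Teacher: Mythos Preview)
The paper contains no proof of this statement: it is stated as a conjecture (restated as Conjecture~\ref{conj:BMT}) and used as a standing hypothesis throughout, so there is nothing to compare your attempt against. You recognize this yourself in your final paragraph.

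Your outline is a reasonable heuristic for how one might attack the conjecture, and the reduction to tilt-stable objects plus deformation along the wall $\nu_{B,\omega}=0$ is indeed the standard opening move in the cases that have since been settled. But as written it is not a proof, and you correctly identify the essential gap: nothing in your argument supplies a genuinely three-dimensional input that controls $\ch_3^B$ with the sharp constant $1/18$. The induction on $\Delta$ does not terminate in a usable base case, and the large-volume limit does not by itself bound $\ch_3$ of a two-dimensional sheaf on a possibly singular support; the paper itself emphasizes (after Conjecture~\ref{conj:BG:intro}) that the inequality would imply Fujita's freeness conjecture for threefolds, which is why a proof along classical lines is out of reach. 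So your proposal should be read as a strategy sketch with an acknowledged missing ingredient, not as a proof attempt to be compared with the paper.
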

The Bogomolov-Gieseker type inequality for
tilt semistable objects
was first considered in order to
construct Bridgeland's stability conditions~\cite{Brs1} on 
projective 3-folds. For that purpose, it is enough 
to know the weaker inequality,
(cf.~\cite[Conjecture~3.2.7]{BMT},)
\begin{align}\label{weak}
\ch^{B}_3(E) < \frac{1}{2}\omega^2 \ch^{B}_1(E)
\end{align}
so the inequality (\ref{intro:ineq})
is stronger than the one required for the 
construction of Bridgeland stability. 
The stronger version (\ref{intro:ineq}) 
was conjectured from purely mathematical ideas
and observations as we explained in~\cite[Section~2]{BBMT}.  
One of the mathematical results which makes 
the inequality (\ref{intro:ineq}) reasonable is that, 
if we assume (\ref{intro:ineq}), then 
Fujita's conjecture on adjoint line bundles~\cite{Fujita} 
for 3-folds
follows~\cite{BBMT}. On the other hand, 
 this fact also shows that proving
the inequality (\ref{intro:ineq}) is very difficult, 
since a complete proof of 
Fujita's conjecture on 3-folds is 
still beyond the current 
knowledge of birational geometry. 

Another conjecture we assume is 
much more technical.
Let $\mM$ be the moduli stack of 
objects $E \in \bB_{B, \omega}$, 
which can be shown to be an algebraic stack over 
$\mathbb{C}$. 
We expect that $\mM$ is analytic locally written as a 
critical locus of some holomorphic function on 
a complex manifold, up to 
gauge action, 
as in the following conjecture:  
\begin{conj} {\bf \rm{(Conjecture~\ref{conj:CS})}}
\label{conj:CS:intro}
 For
any $[E] \in \mM$, 
let $G$ be a maximal reductive 
subgroup in $\Aut(E)$.
Then there exists a $G$-invariant analytic 
open neighborhood $V$ of $0$ in 
$\Ext^1(E, E)$, 
a $G$-invariant holomorphic function $f\colon V\to \mathbb{C}$
with $f(0)=df|_{0}=0$, and a smooth morphism 
of complex analytic stacks
\begin{align*}
\Phi \colon [\{df=0\}/G] \to \mM
\end{align*}
of relative dimension $\dim \Aut(E)- \dim G$. 
\end{conj}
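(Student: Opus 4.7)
The plan is to produce the analytic Kuranishi chart by combining the cyclic $A_\infty$-structure on $R\Hom(E,E)$ induced by the Calabi-Yau 3-fold Serre duality with an equivariant slice construction for the reductive stabilizer $G$. First, I would fix a dg-enhancement of $D^b\Coh(X)$ and represent the object $E\in\bB_{B,\omega}$ by an appropriate finite complex of locally free sheaves (or, more intrinsically, by a perfect dg-module). This provides a dga $\aA\cneq R\Hom(E,E)$, and Serre duality supplies a symmetric trace pairing
\begin{align*}
\Ext^i(E,E)\otimes \Ext^{3-i}(E,E)\longrightarrow \mathbb{C},
\end{align*}
since $X$ is Calabi-Yau of dimension $3$. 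Homotopy transfer yields a minimal $A_\infty$-structure $\{m_n\}_{n\ge 2}$ on $\Ext^\ast(E,E)$ which is cyclic with respect to this pairing, and the $G$-action on $E$ induces an action on $\aA$ compatible with both the $A_\infty$-structure and the pairing; averaging over $G$ makes all choices $G$-equivariant.

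Second, I would write down the formal superpotential using the cyclic structure:
\begin{align*}
f(x)\cneq \sum_{n\ge 2}\frac{1}{n+1}\langle m_n(x,\ldots,x),x\rangle,\qquad x\in \Ext^1(E,E).
\end{align*}
By construction $df(x)=0$ is exactly the Maurer-Cartan equation for $\aA$, so the formal critical locus of $f$ is canonically identified with the formal deformation space of $E$. The function $f$ is $G$-invariant and holomorphic on $V$ with $f(0)=df|_0=0$. At this point one has the formal version of the conjecture.

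Third, I would pass from the formal model to an analytic one. Following Joyce-Song's gauge-theoretic Kuranishi procedure in the sheaf case, I equip the resolving complex with a $G$-invariant Hermitian metric, form the harmonic representatives of $\Ext^\ast(E,E)$ via Kähler identities, and use the formal adjoints of the differentials to construct the analytic Kuranishi map on a small $G$-invariant open $V\subset \Ext^1(E,E)$. Elliptic regularity upgrades the formal Maurer-Cartan solutions to an analytic germ $\{df=0\}\subset V$, and sending a Maurer-Cartan element to the underlying deformed object defines $\Phi\colon [\{df=0\}/G]\to \mM$; smoothness and the relative dimension $\dim\Aut(E)-\dim G$ follow because the full stabilizer $\Aut(E)$ acts on the Maurer-Cartan locus while only $G$ has been quotiented.

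The hard part will be the equivariance and analytic convergence packaged together. Homotopy transfer only produces $A_\infty$-data up to quasi-isomorphism, so I must simultaneously (i) pick transfer data compatible with the $G$-action, (ii) ensure the resulting Kuranishi map is the differential of a genuinely $G$-invariant holomorphic function rather than merely a formal one, and (iii) verify that objects in the tilted heart $\bB_{B,\omega}$, which are genuine two-term complexes, admit locally free resolutions adapted to this construction. Step (ii) is where the Calabi-Yau hypothesis does the real work: the trace pairing has to be compatible with the harmonic splitting in a $G$-equivariant way, which I would establish by running the Kähler-identities argument on a $G$-equivariant resolution and invoking the formality-type statement that cyclic $A_\infty$-structures on Calabi-Yau dgas admit strictly cyclic minimal models.
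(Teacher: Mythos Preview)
The statement you are trying to prove is stated in the paper as a \emph{conjecture}, not a theorem; the paper offers no proof and explicitly says ``we strongly believe that Conjecture~\ref{conj:CS:intro} is true, and leave its full detail to a future publication.'' The only remarks the paper makes are that the sheaf case $E\in\Coh(X)$ is \cite[Theorem~5.3]{JS}, and that a result of this type was announced by Behrend--Getzler~\cite{BG}. So there is no ``paper's own proof'' to compare your proposal against.

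That said, your outline is the right architecture and is essentially the strategy behind the announced Behrend--Getzler result and its later incarnations: cyclic $A_\infty$ minimal model on $\Ext^\ast(E,E)$ from the Calabi--Yau pairing, the superpotential $f$ whose critical locus is the Maurer--Cartan locus, and a $G$-equivariant Kuranishi slice. Where your sketch is genuinely incomplete is exactly where the paper declines to go: the passage from the formal to the analytic model. The Joyce--Song argument you invoke uses gauge theory (Hermitian--Einstein connections, elliptic operators on bundles) in an essential way, and there is no direct analogue of ``harmonic representatives via K\"ahler identities'' for a two-term complex in $\bB_{B,\omega}$ that is not a sheaf. Your step (iii) asserts that ``elliptic regularity upgrades the formal Maurer--Cartan solutions to an analytic germ,'' but you have not identified an elliptic complex to which this applies once $E$ is a genuine object of the tilted heart; resolving $E$ by locally free sheaves does not by itself produce an elliptic deformation complex with the required $G$-equivariant Hodge decomposition. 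This analytic convergence, together with the compatibility of the cyclic pairing with a $G$-equivariant harmonic splitting, is precisely the content the paper regards as open.
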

The above conjecture 
is a derived category version of~\cite[Theorem~5.3]{JS}
and proved if $E \in \Coh(X)$ in~\cite[Theorem~5.3]{JS}
using an analytic method. 
Also a similar result is already announced by Behrend-Getzler~\cite{BG}. 
The result of Conjecture~\ref{conj:CS:intro}
will be needed in order to apply the wall-crossing formula
of DT type invariants~\cite{JS}, \cite{K-S} in the 
derived category setting. 
We strongly believe that Conjecture~\ref{conj:CS:intro} is true, 
and leave its full detail to a future publication. 

Our main result is the following: 
\begin{thm}{\bf \rm{(Corollary~\ref{cor:conji}, 
Theorem~\ref{thm:goal})}}
\label{intro:main}
Let $X$ be a smooth projective Calabi-Yau 3-fold 
such that $\Pic(X)$ is generated by $\oO_X(H)$ for an 
ample divisor $H$ in $X$. Assume that 
$X$ satisfies Conjecture~\ref{conj:BG:intro} 
and Conjecture~\ref{conj:CS:intro}. 
Then $(X, H)$ satisfies Conjecture~\ref{intro:conj}. 
\end{thm}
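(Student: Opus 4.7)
The plan is to realize both sides of (\ref{intro:D4=D6}) as counting invariants of $\nu_{B,\omega}$-semistable objects in the tilted heart $\bB_{B,\omega}$ for an appropriate family of $(B,\omega)$, and to deduce the identity by applying the Joyce--Song/Kontsevich--Soibelman wall-crossing formula along a carefully chosen one-parameter path of tilt stability conditions joining a \emph{D4-point} to a \emph{D6--anti-D6 chamber}. The two assumed conjectures enter in complementary ways: Conjecture~\ref{conj:CS:intro} allows the JS/KS machinery to be run in the derived category, while Conjecture~\ref{conj:BG:intro} controls the numerical types of the wall-crossing constituents and produces the cut-off.

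First, using $\Pic(X) = \mathbb{Z} \cdot \oO_X(H)$, I would restrict attention to parameters $B = -tH$ and $\omega = sH$ with $s,t \in \mathbb{Q}$. A torsion sheaf $E$ supported on a divisor in $|mH|$ with $\ch(E) = (0, mH, -\beta, -n)$ lies in $\bB_{B,\omega}$ for suitable $(s,t)$, and its tilt slope $\nu_{B,\omega}(E)$ can be written down explicitly. At the \emph{D4-point} at which this slope vanishes, $H$-Gieseker semistability of torsion sheaves on divisors in $|mH|$ coincides with $\nu_{B,\omega}$-semistability, up to the standard Joyce--Song treatment of strictly semistable contributions, so $\zZ_{\rm{D}4}^{m}(x,y)$ is identified with a generating series of tilt invariants. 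Part (i) of Conjecture~\ref{intro:conj}, the vanishing of $\DT_H(0, mH, -\beta, -n)$ below the stated bound, is then a direct application of (\ref{intro:ineq}) to $E$ at the D4-point: the inequality $\ch^{B}_3(E) \le \tfrac{1}{18}\omega^2 \ch^{B}_1(E)$, evaluated at $(s,t)$ giving $\nu_{B,\omega}(E)=0$, translates exactly into $n + (\beta \cdot H)^2/(2mH^3) \ge -H^3 m^3 / 24$.

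Second, I would construct a path in the $(s,t)$-plane from the D4-point to a chamber in which the tilt-semistable objects of this numerical type are precisely the extensions
\begin{align*}
0 \to I_{C_1} \otimes \oO_X(D_1) \to E \to \bigl( I_{C_2} \otimes \oO_X(-D_2) \bigr)[1] \to 0
\end{align*}
with $D_2 - D_1 = mH$, and their analogues with PT stable pair complexes in place of ideal sheaves. Each wall encountered is handled by the JS/KS wall-crossing formula for DT type invariants in $\bB_{B,\omega}$, which in the derived category setting requires the analytic Chern--Simons--local model for $\mM$ provided by Conjecture~\ref{conj:CS:intro}. Matching Chern characters of the constituents under the twists $(-) \otimes \oO_X(D_1)$ and $(-) \otimes \oO_X(-D_2)$ produces the monomial prefactors $x^{(D_1^3 - D_2^3)/6}$, $y^{(D_1^2 - D_2^2)/2}$, $z^{H^3 m^3 / 6 + H \cdot c_2(X) m / 12}$ and the variable substitutions $(x z^{-1}, x^{D_2} y z^{-mH})$, $(x z^{-1}, x^{-D_1} y^{-1} z^{-mH})$ appearing in (\ref{intro:ZD6}). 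The operator $\left.\partial/\partial z\right|_{z=-1}$ extracts the single-D4-brane sector (the exponent of $z$ tracking the difference of D6 and anti-D6 ranks) and encodes the DT/PT sign convention.

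Third, the cut-off $\epsilon = \delta/m^{\xi}$ and the precise error bound $-H^3 m^3/24 \cdot (1 - \mu/m^{\xi}) \le n + (\beta \cdot H)^2/(2mH^3)$ are dictated by (\ref{intro:ineq}). The BG-type inequality, applied to any would-be wall-crossing constituent along the path, confines its Chern character to a region which, expressed in the variables $(\beta, n)$, is exactly $C(m, \delta/m^{\xi})$; contributions from $(\beta, n)$ outside this region would force a BG violation on one of the D6- or anti-D6-summands, by the same numerical computation that gives part (i). Choosing $\delta = \mu H^3/2$ as in the Remark following Conjecture~\ref{intro:conj} optimizes this trade-off, and the admissible range of $\mu$ (arbitrary positive for $\xi > 1$, and $0 < \mu < 3/2$ for $\xi = 1$) is read off from the slopes of the walls in the $(s,t)$-plane.

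The principal obstacle is verifying that along the chosen path only a controlled set of walls is crossed, and that the HN/JH factors at these walls assemble precisely into the product $I^{m,\epsilon}(x z^{-1}, x^{D_2} y z^{-mH}) \cdot P^{m,\epsilon}(x z^{-1}, x^{-D_1} y^{-1} z^{-mH})$ after Joyce--Song bookkeeping. The BG-type inequality is indispensable here: it is what reduces the a priori infinite sum of wall contributions to a finite controlled one within the cut-off and rules out the spurious semistable objects that would otherwise spoil the combinatorial identification. By contrast, part (i) and the monomial prefactors are essentially formal consequences of the setup together with (\ref{intro:ineq}), while Conjecture~\ref{conj:CS:intro} plays the technical but crucial role of making the wall-crossing formula applicable in the derived category.
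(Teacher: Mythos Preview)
Your proposal follows essentially the same strategy as the paper: run a one-parameter family of tilt stabilities, use the BG-type inequality to control the wall-crossing factors, and integrate via Joyce--Song. However, several details differ from the paper's actual execution, and one of them is a genuine conceptual gap.

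First, the paper does not vary both $B$ and $\omega$. For each fixed numerical class $v=(0,mH,-\beta,-n)$ it takes $B=-\frac{\beta\cdot H}{mH^3}H$ once and for all, so that $e^{-B}v=(0,mH,0,\frac{H^3}{24}m^3(1-\eta))$, and then varies only $\omega=tH$ for $t>0$. This normalization makes $\ch_2^B(E)=0$, which is what allows the clean numerical estimates. At $t>\sqrt{3}m/2$ one shows $\nu_t$-semistability coincides with $\mu_{H,2}$-semistability of torsion sheaves, and part~(i) is exactly the inequality $\eta\ge 0$ obtained by applying Conjecture~\ref{conj:BG:intro} at $t=\sqrt{3}m/2$; this part of your sketch is correct.

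The genuine gap is your description of the other endpoint. You say the path lands in a ``D6--anti-D6 chamber'' where the semistable objects of class $v$ are nontrivial extensions of an ideal sheaf by a shifted ideal sheaf. In the paper the mechanism is different and simpler: for $t<\frac{\sqrt{3}}{2}m\sqrt{1-\eta}$ the BG inequality forces $\mM_t^{ss}(v)=\emptyset$. Thus $\delta_H(v)$ is expressed \emph{entirely} as the sum of wall-crossing commutators along the interval. The substance of the proof is then to show that at every wall the Harder--Narasimhan factors have length at most two with ranks $(-1,1)$, and that these rank $\pm 1$ factors themselves cross no further walls in the relevant range, so they are literally $\oO_X(m_2H)\otimes I_C$ and $\oO_X(m_1H)\otimes\mathbb{D}(\oO_X\to F)[1]$. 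This is where the work is, and where the cut-off $(\beta_i,n_i)\in C(m,\eta H^3/2)$ comes from; the constants $\mu,\xi,m(\xi,\mu)$ arise not from wall slopes but from a separate argument ensuring that $\mu_{H,2}$-semistable sheaves of class $v$ are $\mu_{H,2}$-stable (so that $\delta_H(v)=\delta_H^s(v)$).

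Finally, your interpretation of $\left.\partial/\partial z\right|_{z=-1}$ is off: the exponent of $z$ is the Euler pairing $\chi(v_2,v_1)$, and the operator extracts the wall-crossing weight $(-1)^{\chi(v_2,v_1)-1}\chi(v_2,v_1)$ coming from the Lie-algebra integration map $\Upsilon$, not a D6 rank difference.
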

If we do not assume Conjecture~\ref{conj:CS:intro}
and just assume Conjecture~\ref{conj:BG:intro}, 
we still have the Euler characteristic version of 
Theorem~\ref{intro:main} as in the 
works~\cite{StTh},~\cite{Tcurve1},~\cite{Tolim2}.
(cf.~Theorem~\ref{thm:goal:euler}.) 
We also remark that assuming the weaker 
inequality (\ref{weak})
does not imply 
any reasonable result. The value $`18'$ in the denominator 
of the RHS of (\ref{intro:ineq}) is crucial in the argument. 

The result of Theorem~\ref{intro:main}
is a sort of results as in~\cite{BBMT}, i.e. 
assuming the inequality (\ref{intro:ineq}) 
yields a reasonable result predicted 
by some other works, which convinces us 
the validity of  
Conjecture~\ref{conj:BG:intro}.  
Indeed, 
the argument of the proof of Theorem~\ref{intro:main}
is closely related to that of~\cite[Theorem~1.1]{BBMT}: 
the tilt semistable objects
we discuss in this paper contain an object
$E \in \bB_{B, \omega}$ which fits into 
 a distinguished triangle, 
\begin{align*}
\oO_X(-mH)[1] \to E \to I_{Z}
\end{align*}
where $Z \subset X$ is a zero dimensional closed 
subscheme. 
(cf.~Subsection~\ref{subsec:Exam}.)
Such an object played an important role in~\cite{BBMT} in 
proving Fujita's conjecture~\cite{Fujita}
 assuming Conjecture~\ref{conj:BG:intro}. 
Furthermore, as we will explain
in Subsection~\ref{subsec:Exam}, 
a curve which appears in the RHS of (\ref{intro:D4=D6}) 
may be defined by a multiplier ideal 
sheaf of some log canonical $\mathbb{Q}$-divisor in $X$. 
This is an important notion 
in the study of 
Fujita's conjecture
and modern birational geometry~\cite{EL}, \cite{Kaw97}, \cite{Hel97}.  
These observations seem to give a surprising connection between 
two different research fields: 
Fujita's conjecture in birational geometry and 
black hole entropy in string theory. 

In order to obtain an intuition how the invariants
(\ref{intro:(i)}), 
(\ref{intro:I})
and (\ref{intro:PT}) are related 
as in
(\ref{intro:D4=D6}), the 
following geometric observation 
may be helpful: 
if $E$ is 
a torsion sheaf contributing to the invariant (\ref{intro:(i)}), 
and it is supported on a smooth member $P \in \lvert mH \rvert$, 
then it is written as 
\begin{align*}
E \cong i_{\ast}(\lL \otimes I_Z)
\end{align*}
where $i \colon P \hookrightarrow X$ is the inclusion, 
$\lL \in \Pic(P)$ and $Z \subset P$ is a 
zero dimensional closed subscheme. 
The line bundle $\lL$ is written as 
$\oO_P(C_2-C_1)$, where $C_1$, $C_2$
are curves in $P$ which do not have common 
irreducible components. 
If $Z$ is disjoint from $C_1$, $C_2$, there is a 
distinguished triangle, 
\begin{align}\label{decay}
I_{C_1 \cup Z} \to E \to \oO_X(-mH) \otimes 
\mathbb{D}(\oO_X \to \oO_{C_2}(C_1 \cap C_2))[1]. 
\end{align}
Here $\mathbb{D}(\ast)$ is the derived dual 
of the complex $\ast$. 
In the sequence (\ref{decay}), 
the left object contributes to (\ref{intro:I})
and the right object contributes to (\ref{intro:PT}). 
In this way, we can see that the objects 
contributing to the invariants (\ref{intro:(i)}), 
(\ref{intro:I}), (\ref{intro:PT})
are related. 
In terms of string theory, the sequence (\ref{decay})
realizes the decay of 
the D4 brane $E$ into 
D6 brane $I_{C_1 \cup Z}$
and anti D6 brane $\oO_X(-mH) \otimes 
\mathbb{D}(\oO_X \to \oO_{C_2}(C_1 \cap C_2))[1]$, 
i.e. D4 $\to$ D6+$\overline{\rm{D}6}$, 
which plays an important role in Denef-Moore's work~\cite{DM}. 

In general the sheaf $E$ may be supported on 
a singular divisor $P \subset X$, 
 which may be even non-reduced. So the above naive geometric 
argument is not applied in a general case. 
However we can use the wall-crossing argument
as we explain in the next subsection.

\subsection{Outline of the proof of Theorem~\ref{intro:main}}
We give an outline of 
the proof of Theorem~\ref{intro:main} (ii).
For a given element, 
\begin{align*}
v=(0, mH, -\beta, -n) \in H^{\ast}(X, \mathbb{Q}) 
\end{align*}
where $H$ is an ample generator of 
$\Pic(X)$ and $m\in \mathbb{Z}_{>0}$, 
we take $B\in H^2(X, \mathbb{Q})$ so that we have 
\begin{align*}
e^{-B}v= \left( 0, mH, 0, \frac{H^3}{24}m^3(1-\eta) \right).
\end{align*}
Here $\eta$ is given by 
\begin{align}\label{intro:eta}
\eta \cneq \left(n+ \frac{(\beta \cdot H)^2}{2mH^3} 
+\frac{H^3}{24}m^3 \right)
 / \frac{H^3}{24}m^3. 
\end{align}
For each $t\in \mathbb{R}_{>0}$,
we consider tilt stability on $\bB_{B, H}$
with respect to $\nu_{t} \cneq \nu_{B, tH}$. 
For each $t\in \mathbb{R}_{>0}$, we consider 
the moduli stack, 
\begin{align}\label{intro:DTt}
\mM_{t}^{ss}(v)
\end{align}
which parameterizes 
$\nu_{t}$-semistable objects
$E \in \bB_{B, H}$ satisfying 
$\ch(E)=v$. 
There is a wall and chamber structure 
on $\mathbb{R}_{>0}$, 
 the parameter space of $t$, 
such that the moduli stack (\ref{intro:DTt})
is constant on a chamber but jumps at 
walls. 
The behavior of the moduli stack (\ref{intro:DTt})
under the wall-crossing 
is described 
in terms of the stack theoretic Hall algebra
of $\bB_{B, H}$, 
as in~\cite{Joy4}, \cite{JS}, \cite{K-S}. 
We can show that, for $t\gg 0$, we have 
\begin{align}\label{M:RHS}
\mM_{t}^{ss}(v) =\mM_{H}^{ss}(v)
\end{align}
where the RHS is the moduli stack
which defines the invariant (\ref{intro:(i)}). 
On the other hand, if we assume Conjecture~\ref{conj:BG:intro}, 
we can show that  
\begin{align*}
\mM_{t}^{ss}(v)=\emptyset
\end{align*}
when $\eta<1$ and $0<t \ll 1$. 
By the above observations, 
the moduli stack $\mM_{H}^{ss}(v)$
can be 
described in terms of the wall-crossing factors
in the Hall algebra.  
The key point is that,
if we assume that 
Conjecture~\ref{conj:BG:intro}
is true 
and the rational number $\eta$ in 
(\ref{intro:eta}) is suitably small, 
then the objects which contribute to the
wall-crossing factors are
one of the following forms:  
\begin{align}\label{J:factor}
\oO_X(m_1 H) \otimes I_{C}, \quad 
 \oO_X(m_2 H) \otimes 
\mathbb{D}(\oO_X \to F)[1]. 
\end{align}
Here $m_i \in \mathbb{Z}$, 
$C \subset X$ is a subscheme with $\dim C \le 1$
and $\mathbb{D}(\oO_X \to F)$ is the derived dual 
of the two term complex $(\oO_X \to F)$
determined by a PT stable pair (\ref{intro:twoterm}).  
Hence it turns out that the stack $\mM_H^{ss}(v)$
is related to the moduli stacks of 
the objects of the form (\ref{J:factor}) in the Hall algebra.
By the wall-crossing formula of DT type invariants~\cite{JS}, 
\cite{K-S}, 
the above relationship of the moduli stacks 
can be integrated to a relationship 
among DT type invariants
(\ref{intro:(i)}), (\ref{intro:I})
and (\ref{intro:PT}),
if we assume Conjecture~\ref{conj:CS:intro}. 
The formula (\ref{intro:D4=D6})
is the resulting relationship among these
invariants
in terms of generating series. 

The conjectural inequality (\ref{intro:ineq}) is
used to 
investigate the 
behavior of numerical classes under the wall-crossing. 
For instance, if $\eta$ satisfies
$0\le \eta \ll 1$, then the resulting 
wall-crossing
factors (\ref{J:factor})
satisfy 
\begin{align}\label{intro:extreme}
([C], \chi(\oO_C)) \in C(m, \epsilon), \ 
([F], \chi(F)) \in C(m, \epsilon)
\end{align}
for $0<\epsilon \ll 1$. 
The above property 
corresponds to the 
\textit{extreme polar state conjecture}, 
which was a conjecture in~\cite{DM} even in
the physics sense. 
The inequality (\ref{intro:ineq})
is also used to show that, if
$\eta$ further satisfies 
\begin{align*}
0\le \eta <\frac{\mu}{m^{\xi}}, \quad 
\mu \in \mathbb{R}_{>0}, \ 
 \ m\gg 0
\end{align*}
for some $\xi \ge 1$ and $\mu>0$, then 
the wall-crossing factors are 
contributed by the objects of the form (\ref{J:factor}). 
This property corresponds to the
 \textit{core dump exponent conjecture}, 
that is the real number $\xi_{cd}$
in~\cite{DM} satisfies 
$\xi_{cd} =1$. 
The above two physical conjectures (extreme polar state conjecture, 
core dump exponent conjecture) were relevant to approximate 
both sides of (\ref{OSV:physics}).  
The main contribution of this paper is 
to deduce these physical conjectures 
from the single conjectural inequality (\ref{intro:ineq}). 
\subsection{Plan of the paper}
The plan of the paper is as follows. 
In Section~\ref{sec:back}, we give some background 
of stability conditions and DT type invariants. 
In Section~\ref{sec:proof}, we give a proof of Theorem~\ref{intro:main}. 
In Section~\ref{sec:append}, we discuss 
the relevant wall-crossing phenomena, and give
an evidence to Conjecture~\ref{intro:conj} (i). 
\subsection{Acknowledgments}
The author is grateful to Arend Bayer and Kentaro Hori 
for valuable discussions and comments. 
He also thanks the referee for several helpful comments. 
This work is supported by World Premier 
International Research Center Initiative
(WPI initiative), MEXT, Japan. This work is also supported by Grant-in Aid
for Scientific Research grant (22684002), 
and partly (S-19104002),
from the Ministry of Education, Culture,
Sports, Science and Technology, Japan.
\section{Background}\label{sec:back}
In this section, we collect
some notions, results, which we 
will use in the proof of Theorem~\ref{intro:main}. 
In what follows, 
$X$ is a smooth projective Calabi-Yau 3-fold 
over $\mathbb{C}$, i.e. 
\begin{align*}
\bigwedge^{3}T_X^{\vee} \cong \oO_X, \quad 
H^1(X, \oO_X)=0. 
\end{align*}
\subsection{Twisted Gieseker stability}
We recall the classical notion of twisted
stability
on $\Coh(X)$ in the sense of Gieseker, 
which will be used in 
constructing DT invariants. 
For the detail of the non-twisted case, see~\cite{Hu}. 
We take an element, 
\begin{align}\label{Bomega}
B+i\omega \in H^2(X, \mathbb{C})
\end{align}
where $B$, $\omega$ are defined over $\mathbb{Q}$, 
and $\omega$ is ample.
For $E\in \Coh(X)$, 
the twisted Hilbert polynomial is
defined by 
\begin{align*}
\chi(E, n) &\cneq \int_X \ch^{B}(E) e^{n\omega} \td_X \\
&= a_d n^d + a_{d-1} n^{d-1} + \cdots, 
\end{align*}
where $a_i \in \mathbb{Q}$, $a_d \neq 0$
and $d=\dim \Supp(E)$. 
Here $\ch^{B}(E)$ is the twisted Chern character, 
\begin{align}\label{tCh}
\ch^{B}(E) \cneq e^{-B} \ch(E) \in H^{\ast}(X, \mathbb{Q}).
\end{align}
The reduced twisted Hilbert polynomial is 
defined by 
\begin{align*}
\overline{\chi}(E, n) \cneq \chi(E, n)/a_d. 
\end{align*}
\begin{defi}
An object $E\in \Coh(X)$ is $B$-twisted 
$\omega$-(semi)stable if the following conditions hold:
\begin{itemize}
\item $E$ is a pure sheaf, i.e. there is no
subsheaf $0\neq F \subsetneq E$ with 
$\dim \Supp(F)<\dim \Supp(E)$. 
\item For any subsheaf $0\neq F \subsetneq E$, 
we have for $n\gg 0$, 
\begin{align*}
\overline{\chi}(F, n)<(\le) \overline{\chi}(E, n). 
\end{align*}
\end{itemize}
\end{defi}
If $B=0$, then 
the $B$-twisted $\omega$-(semi)stable sheaves
are called $\omega$-(semi)stable sheaves. 

\subsection{Twisted slope stability}
Here we recall the
notion of 
twisted slope
stability 
determined by an element
(\ref{Bomega}), 
which is coarser than the twisted 
Gieseker stability. 
For $d\in \mathbb{Z}_{\ge 0}$, let 
$\Coh_{\le d}(X) \subset \Coh(X)$ be the 
subcategory defined by 
\begin{align*}
\Coh_{\le d}(X) \cneq \{ E\in \Coh(X) : 
\dim \Supp(E) \le d\}.
\end{align*}
The $B$-twisted $\omega$-slope function
\begin{align*}
\mu_{B, \omega, d} \colon 
\Coh_{\le d}(X) \to \mathbb{Q} \cup \{ \infty\}
\end{align*}
is defined as follows: 
if $E \in \Coh_{\le d-1}(X)$, we set 
$\mu_{B, \omega, d}(E)=\infty$. Otherwise
we set 
\begin{align*}
\mu_{B, \omega, d}(E) \cneq 
\frac{\omega^{d-1} \cdot \ch^{B}_{4-d}(E)}{\omega^d \cdot \ch^{B}_{3-d}(E)}.
\end{align*}
The notion of $\mu_{B, \omega, d}$-stability is defined as follows:  
\begin{defi}\label{G-stab}
An object $E \in \Coh_{\le d}(X)$ is 
$\mu_{B, \omega, d}$-(semi)stable if for
any exact sequence 
$0 \to F \to E \to G \to 0$ 
in $\Coh_{\le d}(X)$, we have 
\begin{align*}
\mu_{B, \omega, d}(F)<(\le) \mu_{B, \omega, d}(G). 
\end{align*}
\end{defi}
The above stability condition is called
a $B$-twisted $\omega$-slope stability condition. 
Note that if $B$ is proportional to $\omega$, then 
$\mu_{B, \omega, d}$-stability coincides with 
$\mu_{\omega, d} \cneq \mu_{0, \omega, d}$-stability. 
Also if $d=3$, we just set
$\mu_{B, \omega} \cneq \mu_{B, \omega, 3}$. 
\begin{rmk}\label{rmk:check}
It is easy to check that
the twisted slope stability is 
coarser than the twisted stability. 
Namely
if $E$ is a $d$-dimensional sheaf, 
then $E\in \Coh_{\le d}(X)$ is, omitting
notation $B$-twisted and $\omega$-, 
\begin{align*}
\mbox{\rm{slope stable}}
\Rightarrow \mbox{\rm{stable}}
\Rightarrow \mbox{\rm{semistable}}
\Rightarrow \mbox{\rm{slope semistable}}. 
\end{align*}
\end{rmk}

\subsection{Tilt stability}\label{subsec:tilt}
The notion of tilt stability in~\cite{BMT}
is defined in the abelian 
category obtained as a tilt of $\Coh(X)$. 
For an element (\ref{Bomega}),  
we set the following subcategories of $\Coh(X)$: 
\begin{align*}
\tT_{B, \omega} &\cneq 
\left\langle 
E : \begin{array}{c} E 
\mbox{ is a torsion sheaf or torsion free }  \\
\mu_{B, \omega}
\mbox{-semistable
sheaf with }
\mu_{B, \omega}(E)>0.
\end{array}
\right\rangle \\
\fF_{B, \omega} &\cneq 
\left\langle
E: 
\begin{array}{c}
 E \mbox{ is a torsion free }
\mu_{B, \omega} \mbox{-semistable } \\
\mbox{ sheaf 
 with } \mu_{B, \omega}(E) \le 0
\end{array}
  \right\rangle.
\end{align*}
Here $\langle S \rangle$ is the 
smallest extension closed subcategory 
which contains $S$. 
The pair of subcategoris $(\tT_{B, \omega}, \fF_{B, \omega})$
is a torsion pair of $\Coh(X)$.
(cf.~\cite{HRS}.)
Its tilting is defined by 
\begin{align*}
\bB_{B, \omega} \cneq \langle 
\fF_{B, \omega}[1], \tT_{B, \omega} \rangle
\subset D^b \Coh(X).  
\end{align*}
By a general theory of tilting~\cite{HRS}, 
 $\bB_{B, \omega}$ is the heart of a bounded t-structure 
on $D^b \Coh(X)$, hence an abelian category. 
\begin{rmk}
For any object $E\in \bB_{B, \omega}$, 
we have $\ch^{B}_1(E)\omega^2 \ge 0$. 
In particular if $F$ is a subobject of $E$ in 
$\bB_{B, \omega}$, we have 
$\ch^{B}_1(E)\omega^2 \ge \ch^{B}_1(F)\omega^2$.
\end{rmk}
Let $Z_{B, \omega} \colon K(X) \to \mathbb{C}$
be the group homomorphism defined by 
\begin{align*}
Z_{B, \omega}(E) &\cneq -\int_{X}
\ch^{B}(E) e^{-i\omega} \\
&= \left(-\ch^{B}_3(E)+\frac{1}{2}\omega^2 
\ch^{B}_1(E)  \right) + i \left( 
\omega \ch^{B}_2(E) -\frac{1}{6} \omega^3 
\ch^{B}_0(E) \right). 
\end{align*}
The slope function 
\begin{align*}
\nu_{B, \omega} \colon \bB_{B, \omega} \to 
\mathbb{Q} \cup \{ \infty \}
\end{align*}
is defined as follows: 
if $\omega^2 \ch^{B}_1(E)=0$, then we
set $\nu_{B, \omega}(E)=\infty$. 
Otherwise we set
\begin{align*}
\nu_{B, \omega}(E) \cneq 
\frac{\Imm Z_{B, \omega}(E)}{\omega^2 \ch^{B}_1(E)}.
\end{align*}
Similarly to $\mu_{B, \omega, d}$-stability 
on $\Coh_{\le d}(X)$, the 
above slope function defines 
$\nu_{B, \omega}$-stability on $\bB_{B, \omega}$:
\begin{defi}
An object $E\in \bB_{B, \omega}$ is 
$\nu_{B, \omega}$-(semi)stable if for any exact sequence 
$0 \to F \to E \to G \to 0$ in 
$\bB_{B, \omega}$, we have 
\begin{align*}
\nu_{B, \omega}(F) <(\le) \nu_{B, \omega}(G). 
\end{align*}
\end{defi}
The above stability condition on $\bB_{B, \omega}$
is 
called tilt stability in~\cite{BMT}. 

\subsection{Bogomolov-Gieseker type inequalities}
We will use two kinds
of inequalities of 
Chern characters of tilt semistable 
objects in $\bB_{B, \omega}$. 
One of them is a generalization of 
classical Bogomolov-Gieseker inequality 
to tilt semistable objects, proved in~\cite[Corollary~7.3.2]{BMT}. 
The other one is a conjectural inequality 
proposed in~\cite[Conjecture~1.3.1]{BMT}. 

The first inequality is as follows: 
\begin{thm}{\bf \cite[Corollary~7.3.2]{BMT}}\label{thm:class}
For any $\nu_{B, \omega}$-semistable object $E \in \bB_{B, \omega}$,
we have the inequality,
\begin{align}\label{BG:thm1}
(\omega^2 \ch^{B}_1(E))^2 -2 (\omega^3 \ch^{B}_0(E))
(\omega \ch^{B}_2(E)) \ge 0. 
\end{align}
\end{thm}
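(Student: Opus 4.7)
The plan is to reduce the inequality for $\nu_{B,\omega}$-semistable objects in $\bB_{B,\omega}$ to the classical twisted Bogomolov--Gieseker inequality for $\mu_{B,\omega}$-slope semistable coherent sheaves, by exploiting the torsion pair $(\tT_{B,\omega}, \fF_{B,\omega})$ that defines the tilt, together with a Hodge-index analysis of the discriminant
\begin{align*}
\overline{\Delta}_{\omega}(E) \cneq (\omega^2\,\ch_1^B(E))^2 - 2(\omega^3\,\ch_0^B(E))(\omega\,\ch_2^B(E)).
\end{align*}

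First I would settle the inequality for coherent sheaves. If $F$ is a $\mu_{B,\omega}$-semistable torsion-free sheaf on $X$, then the classical Bogomolov inequality $\ch_1(F)^2-2\ch_0(F)\ch_2(F)\ge 0$ in $H^4(X,\mathbb{Q})$ (which is insensitive to the twist by $B$, being a discriminant) gives $\overline{\Delta}_{\omega}(F)\ge 0$ after intersecting with $\omega$ and applying the Hodge index theorem to the $\omega$-primitive part of $\ch_1^B(F)$. When $\ch_0^B(F)=0$ (for instance when $F$ is torsion), the conclusion $\overline{\Delta}_{\omega}(F)=(\omega^2\,\ch_1^B(F))^2\ge 0$ is automatic.

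For a $\nu_{B,\omega}$-semistable object $E\in\bB_{B,\omega}$, I would then consider the canonical exact sequence
\begin{align*}
0\to \hH^{-1}(E)[1]\to E\to \hH^0(E)\to 0
\end{align*}
in $\bB_{B,\omega}$, with $\hH^{-1}(E)\in\fF_{B,\omega}$ and $\hH^0(E)\in\tT_{B,\omega}$, and refine it via the $\mu_{B,\omega}$-Harder--Narasimhan filtrations of these two sheaves in $\Coh(X)$. By construction of the torsion pair, the resulting subquotients in $\bB_{B,\omega}$ are either $\mu_{B,\omega}$-semistable sheaves in $\tT_{B,\omega}$, or shifts $G[1]$ of $\mu_{B,\omega}$-semistable sheaves $G\in\fF_{B,\omega}$; in either case Step~1 yields $\overline{\Delta}_{\omega}\ge 0$ on each factor, while the $\nu_{B,\omega}$-semistability of $E$ forces all Jordan--H\"older factors to share the common slope $\nu_{B,\omega}(E)$ (with the $\nu_{B,\omega}=\infty$ case treated separately).

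The main obstacle is then the passage from the inequality on the factors to the inequality on $E$ itself. The mechanism is that $\overline{\Delta}_{\omega}$ is the discriminant of a bilinear form of Lorentzian signature on the relevant quotient of the numerical Grothendieck group (via the Hodge index theorem on $\omega^{\perp}$), and the constraint $\nu_{B,\omega}(F_i)=\nu_{B,\omega}(E)$ coming from tilt semistability places the Jordan--H\"older classes in a codimension-one subspace on which this bilinear form is semi-definite in the required direction. Expanding
\begin{align*}
\overline{\Delta}_{\omega}(E)=\sum_i \overline{\Delta}_{\omega}(F_i)+\text{cross terms}
\end{align*}
and controlling the sign of the cross terms via this Hodge-index-type estimate gives the desired inequality. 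The delicate part, which I expect to absorb most of the effort, is a careful diagonalization of the bilinear form on the span of the HN classes, using the equality of $\nu_{B,\omega}$-slopes in an essential way.
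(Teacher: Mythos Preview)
The paper does not prove this theorem; it is quoted verbatim from \cite[Corollary~7.3.2]{BMT} and used as a black box. So there is no in-paper argument to compare against, only the proof in \cite{BMT}, which proceeds by a large-volume-limit plus wall-crossing induction rather than by the direct decomposition you propose.

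Your outline has a genuine gap at the junction of Steps~2 and~3. The filtration you build comes from the $\mu_{B,\omega}$-Harder--Narasimhan filtrations of $\hH^{-1}(E)$ and $\hH^0(E)$, so its subquotients $F_i$ are $\mu_{B,\omega}$-semistable sheaves (or their shifts) and indeed satisfy $\overline{\Delta}_{\omega}(F_i)\ge 0$ by the classical inequality. However, these $F_i$ are \emph{not} Jordan--H\"older factors of $E$ for $\nu_{B,\omega}$, and $\nu_{B,\omega}$-semistability of $E$ does \emph{not} force $\nu_{B,\omega}(F_i)=\nu_{B,\omega}(E)$: it only gives inequalities $\nu_{B,\omega}(E_j)\le \nu_{B,\omega}(E)\le \nu_{B,\omega}(E/E_j)$ on the steps $E_j$ of the filtration, not equalities on the subquotients. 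Without the equal-slope constraint, your ``codimension-one subspace'' picture collapses: the classes of the $F_i$ are not confined to a hyperplane on which the Lorentzian form is semidefinite, and the cross terms in $\overline{\Delta}_{\omega}(E)=\sum_i\overline{\Delta}_{\omega}(F_i)+(\text{cross terms})$ can have either sign. If instead you meant the genuine $\nu_{B,\omega}$-Jordan--H\"older factors of $E$, those do share the slope $\nu_{B,\omega}(E)$, but then Step~1 no longer applies to them: they are $\nu_{B,\omega}$-stable objects of $\bB_{B,\omega}$, for which $\overline{\Delta}_{\omega}\ge 0$ is essentially the statement you are trying to prove, and the argument becomes circular.

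The fix used in \cite{BMT} is to run an induction along the ray $\omega\mapsto t\omega$. For $t\gg 0$ one shows (this is \cite[Lemma~7.2.1, 7.2.2]{BMT}, and is used elsewhere in the present paper) that $\nu_{B,t\omega}$-semistable objects are built from slope-semistable sheaves, so classical Bogomolov--Gieseker gives the base case. As $t$ decreases there are only finitely many walls; at each wall a strictly semistable $E$ has $\nu$-Jordan--H\"older factors $F_i$ with $\omega^2\ch_1^B(F_i)$ strictly smaller than $\omega^2\ch_1^B(E)$, so the inequality holds for them by induction, and \emph{now} your Hodge-index/cross-term computation is exactly what closes the step, because the $F_i$ genuinely lie on the hyperplane $\nu_{B,\omega}=\nu_{B,\omega}(E)$. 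In short, your Step~3 mechanism is correct and is precisely what \cite{BMT} uses, but it must be fed the $\nu$-JH factors at a wall together with an inductive hypothesis, not the $\mu$-HN factors of the cohomology sheaves.
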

Note that if $\Pic(X)$ is generated by 
$\oO_X(H)$ for an ample divisor $H$, the 
inequality (\ref{BG:thm1}) is equivalent to
\begin{align}\label{BG:thm2}
H (\ch^{B}_1(E))^2 -2 H \ch^{B}_0(E)\ch^{B}_2(E) \ge 0. 
\end{align}
The second inequality is given in the
following conjecture: 
\begin{conj}{\bf \cite[Conjecture~1.3.1]{BMT}}\label{conj:BMT}
For any $\nu_{B, \omega}$-semistable object
$E\in \bB_{B, \omega}$ with 
$\nu_{B, \omega}(E)=0$, we have the inequality,
\begin{align}\label{conj:ineq:ch3}
\ch^{B}_3(E) \le \frac{1}{18}\omega^2 \ch^{B}_1(E). 
\end{align}
\end{conj}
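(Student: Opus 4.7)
The plan is to approach this conjecture, which is genuinely deep (as the paper itself emphasizes by noting its equivalence strength with Fujita's conjecture in dimension three), in a multi-stage strategy, and I will flag up front that the main obstacle lies not in the strategy but in the sharp coefficient $1/18$.

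First, I would reduce to the $\nu_{B,\omega}$-\emph{stable} case: given a strictly semistable $E$ with $\nu_{B,\omega}(E)=0$, all its Jordan--H\"older factors $E_i$ in $\bB_{B,\omega}$ are $\nu_{B,\omega}$-stable with $\nu_{B,\omega}(E_i)=0$, and both $\ch^{B}_3$ and $\omega^2 \ch^{B}_1$ are additive on short exact sequences, so the general case follows from the stable case. Next, I would record what the normalization $\nu_{B,\omega}(E)=0$ buys: $\Imm Z_{B,\omega}(E)=0$, i.e.\
\[
\omega \cdot \ch^{B}_2(E) \;=\; \tfrac{1}{6}\,\omega^3 \cdot \ch^{B}_0(E).
\]
Plugging this into Theorem~\ref{thm:class} gives $(\omega^2\ch^{B}_1(E))^2 \ge \tfrac{1}{3}(\omega^3\ch^{B}_0(E))^2$, so the invariants $(\ch^{B}_0,\ch^{B}_1,\ch^{B}_2)$ of $E$ are already tightly constrained; only $\ch^{B}_3$ is free, and it is exactly this invariant that the conjecture controls.

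The natural next step is to perform a \emph{second tilt} of $\bB_{B,\omega}$ using the slope $\nu_{B,\omega}$, producing a heart $\aA_{B,\omega} \subset D^b\Coh(X)$ of a new t-structure. I would then try to build a central charge of the form
\[
Z^{(2)}_{B,\omega}(E) \;=\; -\ch^{B}_3(E) + \tfrac{1}{18}\,\omega^2\ch^{B}_1(E) \;+\; i\,\bigl(\omega^2\ch^{B}_1(E)\bigr)
\]
(up to normalization), and attempt to verify that $Z^{(2)}_{B,\omega}$ maps $\aA_{B,\omega}\setminus\{0\}$ into the strict upper half plane together with the negative real axis. The desired inequality on $\nu_{B,\omega}$-semistable objects with $\nu=0$ is then precisely the statement that such objects (or their shifts) lie in $\aA_{B,\omega}$ with non-positive real central charge; equivalently, that $Z^{(2)}_{B,\omega}$ together with $\aA_{B,\omega}$ defines a Bridgeland stability condition. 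This reframing converts the conjecture into a positivity statement that could, in principle, be attacked by the classical Bogomolov--Gieseker inequality on surfaces via the Mehta--Ramanathan restriction trick, restricting $E$ to a sufficiently general surface in $|mH|$ for $m\gg 0$ and bootstrapping from the two-dimensional inequality.

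The main obstacle is the precise constant $1/18$. As the paper stresses, the weaker variant (\ref{weak}) with constant $1/2$ is enough for constructing Bridgeland stability and is likely accessible by soft arguments (formal manipulations of the tilt and the quadratic inequality above), but dropping the constant by a factor of nine is a sharp statement with no obvious slack. I expect that any successful approach will have to carry out a careful induction on the numerical class, control extensions of the elementary building blocks $\oO_X(-mH)[1]$, $I_Z$, and line bundles twisted by ideals of low-dimensional subschemes, and at some point genuinely invoke the restriction of $E$ to a divisor together with an inequality on the surface that gives the coefficient $1/18$ (as opposed to $1/2$). Without such an ingredient, the numerical gap between the weak and the conjectured inequalities cannot be bridged, which is why in this paper the inequality is assumed rather than proved.
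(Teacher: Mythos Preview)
The statement you are asked to prove is Conjecture~\ref{conj:BMT}, and the paper does \emph{not} prove it: it is the main standing hypothesis assumed throughout Section~\ref{sec:proof}, quoted verbatim from \cite[Conjecture~1.3.1]{BMT}. So there is no ``paper's own proof'' to compare against. You recognize this yourself in the final sentence of your write-up, and that recognition is correct.

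That said, what you have written is not a proof proposal but a strategy sketch together with an honest assessment of the obstruction. The reduction to the stable case and the use of $\Imm Z_{B,\omega}(E)=0$ combined with Theorem~\ref{thm:class} are fine preliminary observations. The double-tilt reformulation you describe is exactly the construction of $\aA_{B,\omega}$ in \cite[Definition~3.2.5]{BMT}, and your point that the conjecture is equivalent to $(Z_{B,\omega},\aA_{B,\omega})$ being a Bridgeland stability condition matches \cite[Conjecture~3.2.6]{BMT}. But none of this constitutes progress toward the inequality itself: you have rephrased the conjecture, not reduced it. The Mehta--Ramanathan restriction idea you float does not, as far as is known, produce the constant $1/18$; the surface Bogomolov--Gieseker inequality is quadratic in Chern classes and gives no direct control on $\ch_3$. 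Your final paragraph correctly identifies this gap as the essential obstacle, and the paper concurs by treating the inequality as an assumption rather than a theorem.
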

\begin{rmk}
The above conjecture is not restricted to 
Calabi-Yau 3-fold case. 
For instance, the inequality (\ref{conj:ineq:ch3})
is proved when $X=\mathbb{P}^3$
and $\omega^3 <3\sqrt{3}$. 
(cf.~\cite[Theorem~8.2.1]{BMT}.)
\end{rmk}
\begin{rmk}
There is the heart of 
a bounded t-structure $\aA_{B, \omega} \subset D^b \Coh(X)$
obtained as a tilting of $\bB_{B, \omega}$.
(cf.~\cite[Definition~3.2.5]{BMT}.)
If we assume Conjecture~\ref{conj:BMT}, 
then the pair 
\begin{align}\label{stab:A}
(Z_{B, \omega}, \aA_{B, \omega})
\end{align}
gives a Bridgeland's stability condition~\cite{Brs1} on 
$D^b \Coh(X)$. (cf.~\cite[Conjecture~3.2.6]{BMT}.)
We note that any tilt semistable object
$E$ with $\nu_{B, \omega}(E)=0$ is always 
semistable 
w.r.t. Bridgeland stability (\ref{stab:A})
with phase one. 
\end{rmk}

\subsection{Donaldson-Thomas invariants}\label{subsec:back:DT}
Let $H$ be an ample divisor 
in a Calabi-Yau 3-fold $X$. 
The DT invariant is an invariant counting
$H$-semistable sheaves
on $X$. 
For
an element 
\begin{align*}
(r, D, \beta, n) \in H^0(X) \oplus H^2(X) \oplus H^4(X)
\oplus H^6(X)
\end{align*}
it is denoted by 
\begin{align}\label{def:DT}
\DT_{H}(r, D, \beta, n) \in \mathbb{Q}. 
\end{align} 
The easier case to define
(\ref{def:DT})
is when any $H$-semistable sheaf
contributing to (\ref{def:DT}) is $H$-stable.  
Let
\begin{align}\label{M:stack}
\mM_{H}^{s}(r, D, \beta, n) \quad
(\mbox{ resp. }
\mM_H^{ss}(r, D, \beta, n) )
\end{align}
 be the moduli stack
of $H$-stable 
(resp.~$H$-semistable)
sheaves $E$ in the sense of Definition~\ref{G-stab},
satisfying 
$\ch(E)=(r, D, \beta, n)$.
The stacks (\ref{M:stack})
are known to be Artin stacks of finite type over $\mathbb{C}$. 
We have the 
$B\mathbb{C}^{\ast}$-bundle structure
\begin{align*}
\mM_{H}^{s}(r, D, \beta, n) \to M_H^{s}(r, D, \beta, n) 
\end{align*}
where $M_H^{s}(r, D, \beta, n)$ is a quasi-projective
 scheme over $\mathbb{C}$. 
When any sheaf 
$[E] \in \mM_H^{ss}(r, D, \beta, n)$
is $H$-stable, 
the scheme $M_H^{s}(r, D, \beta, n)$ is 
a projective $\mathbb{C}$-scheme. 
In this case, the DT invariant (\ref{def:DT}) 
is defined by 
\begin{align}\notag
\DT_{H}(r, D, \beta, n) &\cneq 
\int_{M_H^{s}(r, D, \beta, n)} \nu \ d \chi \\
\label{nu:DT}
&=\sum_{m\in \mathbb{Z}} m \chi(\nu^{-1}(m)). 
\end{align}
Here $\nu$
is the Behrend's constructible function~\cite{Beh}, 
\begin{align*}
\nu \colon M_H^{s}(r, D, \beta, n) \to \mathbb{Z}.
\end{align*}
Note that if $M_H^{s}(r, D, \beta, n)$ is non-singular, 
then $\DT_{H}(r, D, \beta, n)$ is 
the topological 
 Euler characteristic of $M_H^{s}(r, D, \beta, n)$
up to sign.

When there is a strictly $H$-semistable 
sheaf $[E] \in \mM_H^{ss}(r, D, \beta, n)$, the definition 
of (\ref{def:DT})
is much more complicated. In this case, 
(\ref{def:DT}) is defined by integrating the 
Behrend function over the 
`logarithm' of the moduli stack $\mM_H^{ss}(r, D, \beta, n)$
in the stack theoretic Hall algebra~\cite{JS}, \cite{K-S}. 
Since only DT invariants defined
as in (\ref{nu:DT}) appear in this paper,  
we omit the detail of the latter construction.  
 
\subsection{DT invariants counting torsion sheaves}
We are interested in DT invariants counting
sheaves supported on ample divisors in $X$, i.e. 
the invariants, 
\footnote{The sign of 
$\beta$ and $n$ is chosen in order to match the notation of 
rank one DT invariants (\ref{inv:I}) below. }
\begin{align}\label{DT:0m}
\DT_H(0, mH, -\beta, -n) \in \mathbb{Q}
\end{align}
for $m \in \mathbb{Z}_{>0}$. 
The generating series of the invariants (\ref{DT:0m})
is denoted by
\begin{align}\label{ZD4}
\zZ_{\rm{D}4}^{m}(x, y) \cneq 
\sum_{\beta, n} \DT_H(0, mH, -\beta, -n)x^{n} y^{\beta}. 
\end{align}
Although it is not an obvious problem to compute the series 
(\ref{ZD4}), its local 
version is very easy to compute as follows: 
\begin{exam}\label{exam:D4}
Let $P\in \lvert mH \rvert$ be a smooth member.
In the notation of the previous subsection, we have 
the subscheme, 
\begin{align*}
M_H^{s}(0, P, -\beta, -n) \subset M_H^{s}(0, mH, -\beta, -n)
\end{align*}
corresponding to stable sheaves supported on $P$. 
Then we have the local DT invariant, 
\begin{align}\label{loc:DT}
\DT_{H}(0, P, -\beta, -n)
\cneq \int_{M_H^{s}(0, P, -\beta, -n)} \nu \ d\chi
\end{align}
where $\nu$ is the Behrend function on 
$M_H^{s}(0, mH, -\beta, -n)$
restricted to $M_H^{s}(0, P, -\beta, -n)$. 
Note that $\nu$ is not the Behrend 
function on the subscheme
$M_H^{s}(0, P, -\beta, -n)$.
Let us compute the invariant (\ref{loc:DT}). 

We note that, 
although $M_H^{s}(0, mH, -\beta, -n)$ may not be 
projective, the scheme $M_H^{s}(0, P, -\beta, -n)$
is always projective. 
Indeed sheaves corresponding to points 
in $M_H^{s}(0, P, -\beta, -n)$
have the following form, 
\begin{align}\label{fo:form}
E\cong i_{\ast}(\lL \otimes I_{Z})
\end{align}
where $\lL \in \Pic(P)$, $Z \subset P$ is a 
zero dimensional closed subscheme, and 
$I_Z$ is the defining ideal of $Z$. 
The condition $\ch(E)=(0, mH, -\beta, -n)$
is equivalent to 
\begin{align*}
&\frac{H^2}{2}m^2 - i_{\ast}c_1(\lL)= \beta \\
&\lvert Z \rvert -\frac{H^3}{6}m^3 +\frac{Hc_1(\lL)}{2}m
- \frac{1}{2} c_1(\lL)^2 =n. 
\end{align*}
Here we have written $H|_{P}$ just as $H$ for simplicity,
and $i\colon P \hookrightarrow X$ is the inclusion.  
Since $H^1(\oO_P)=0$, we have the isomorphism 
$c_1 \colon \Pic(P) \stackrel{\cong}{\to} \mathrm{NS}(P)$, 
hence we have 
\begin{align}\label{MPb}
M_H^{s}(0, P, -\beta, -n)=
\coprod_{\begin{subarray}{c}
l\in \mathrm{NS}(P), \\
i_{\ast}l=\frac{1}{2}m^2 H^2 -\beta
\end{subarray}}
\Hilb^{\frac{H^3}{6}m^3 -\frac{Hl}{2}m + \frac{1}{2}l^2
+n}(P). 
\end{align}
Therefore $M_H^{s}(0, P, -\beta, -n)$
is projective, and the definition (\ref{loc:DT})
is an analogy of (\ref{nu:DT}) in the local case.

It is easy to see that the moduli space 
$M_H^{s}(0, mH, -\beta, -n)$ is smooth along 
$M_H^{s}(0, P, -\beta, -n)$ with dimension 
\begin{align*}
\dim \mathbb{P}(H^0(X, \oO_X(mH)))+ \dim \Hilb^{i}(P)
\end{align*}
for some $i \in \mathbb{Z}_{\ge 0}$. Therefore 
we have 
\begin{align}\label{nu:restrict}
\nu|_{M_H^{s}(0, P, -\beta, -n)}
=(-1)^{\frac{H^3}{6}m^3 + \frac{H c_2(X)}{12}m -1}.
\end{align}
By (\ref{MPb}) and (\ref{nu:restrict}), the generating series of 
local DT invariants (\ref{loc:DT}) is calculated as 
\begin{align*}
&\sum_{\beta, n} \DT(0, P, -\beta, -n)x^n y^{\beta} \\
&=(-1)^{\frac{H^3}{6}m^3 + \frac{H c_2(X)}{12}m -1}
\sum_{\begin{subarray}{c}
l\in \mathrm{NS}(P) \\
N\ge 0
\end{subarray}}
\chi(\Hilb^{N}(P))x^{N-\frac{H^3}{6}m^3 + \frac{Hl}{2}m
-\frac{1}{2}l^2}
y^{\frac{H^2}{2}m^2 -i_{\ast}l} \\
&=(-1)^{\frac{H^3}{6}m^3 + \frac{H c_2(X)}{12}m -1}
\prod_{N\ge 1}(1-x^N)^{-H^3 m^3 -Hc_2(X)m} \\
& \qquad \qquad \qquad \qquad \qquad \qquad \qquad \quad
\cdot\sum_{l \in \mathrm{NS}(P)}
x^{-\frac{H^3}{6}m^3+\frac{Hl}{2}m-\frac{1}{2}l^2}
y^{\frac{H^2}{2}m^2 -i_{\ast}l}.
\end{align*}
Here we have used the G$\ddot{\textit{o}}$ttsche's formula~\cite{Got}, 
\begin{align}\label{Gottsche}
\sum_{N\ge 0} \chi(\Hilb^{N}(P))x^N=\prod_{N\ge 1}
(1-x^N)^{-\chi(P)}. 
\end{align}
\begin{rmk}
Although it is easy to compute the local DT invariants 
when $P\in \lvert mH \rvert$ is non-singular
as in Example~\ref{exam:D4}, it is not obvious 
to compute the local invariants
when $P$ has singularities. For instance, 
the G$\ddot{\rm{o}}$ttsche
type formula (\ref{Gottsche}) is not known for singular surfaces. 
\end{rmk}
\end{exam}

\subsection{DT and PT invariants counting curves}\label{subsec:DTPT}
As we discussed in the introduction, our purpose is 
to relate the series (\ref{ZD4}) with the generating series
of two kinds
of DT type curve counting invariants in $X$. 
One of them is rank one DT invariant, and the 
other one is PT stable pair invariant~\cite{PT}. 
Both of these invariants depend on 
$\beta \in H_2(X, \mathbb{Z})$ and $n\in \mathbb{Z}$. 
Note that $\beta$ and $n$ are also interpreted 
as elements of $H^4(X)$ and $H^6(X)$ respectively 
by the Poincar\'e duality. 

The former invariant is defined by, 
\begin{align}\label{inv:I}
I_{n, \beta} \cneq \DT_{H}(1, 0, -\beta, -n). 
\end{align}
Note that any sheaf contributing to $I_{n, \beta}$
is of the form $I_{C}$ where $C \subset X$
is a closed subscheme with $\dim C\le 1$, 
$[C]=\beta$ and $\chi(\oO_C)=n$. 
(cf.~\cite{MNOP}.)
In particular the invariant (\ref{inv:I}) does not 
depend on a choice of $H$. 

The latter invariant roughly counts pairs of 
a curve and a divisor on it. This notion is formulated 
as stable pairs: by definition, a stable pair
is a pair 
\begin{align}\label{stable:pair}
(\oO_X \stackrel{s}{\to} F),
\end{align}
where $F$ is a pure one dimensional coherent sheaf on $X$,
and $s$ is surjective in dimension one. 
A typical example is given by 
$(\oO_X \stackrel{s}{\to} \oO_C(D))$, 
where $C \subset X$ is a smooth curve, 
$D \subset C$ is an effective divisor
and $s$ is a natural morphism. 
The moduli space of stable pairs (\ref{stable:pair})
satisfying $[F]=\beta$ and $\chi(F)=n$ is denoted by 
\begin{align*}
P_n(X, \beta). 
\end{align*}
The above moduli space is shown to be 
a projective scheme over $\mathbb{C}$. 
The PT invariant is defined by 
\begin{align}\label{inv:P}
P_{n, \beta} \cneq \int_{P_n(X, \beta)} \nu d\chi, 
\end{align}
where $\nu$ is the Behrend function on 
$P_n(X, \beta)$. Note that both of the invariants (\ref{inv:I}), (\ref{inv:P})
are integer valued. 

We will use
the following cut off generating series: 
\begin{align}\label{cut:I}
I^{m, \epsilon}(x, y) &\cneq 
\sum_{(\beta, n) \in C(m, \epsilon)} I_{n, \beta}x^n y^{\beta}, \\
\label{cut:II}
P^{m, \epsilon}(x, y) &\cneq 
\sum_{(\beta, n) \in C(m, \epsilon)} P_{n, \beta}x^n y^{\beta}.
\end{align}
Here $m\in \mathbb{Z}_{>0}$, $\epsilon \in \mathbb{R}_{>0}$, and 
$C(m, \epsilon)$ is defined to be
\begin{align}\label{Cme}
C(m, \epsilon) \cneq \{ (\beta, n) \in H_2(X) \oplus H_0(X) :
\beta \cdot H < \epsilon m^2, \ \lvert n \rvert < \epsilon m^3\}.
\end{align}
The relationship between 
$I_{n, \beta}$ and $P_{n, \beta}$ is 
conjectured in~\cite{PT}, 
proved 
in~\cite{StTh},~\cite{Tcurve1}
at the Euler characteristic level and in~\cite{BrH} for 
the honest DT invariants. 
This is formulated in terms of non-cut off generating 
series. If we define 
$I(x, y)$, $P(x, y)$ by formally putting 
$\epsilon=1$, $m=\infty$ 
to the series (\ref{cut:I}), (\ref{cut:II}) respectively, then 
we have, by~\cite{Tcurve1}, \cite{StTh}, \cite{BrH}, 
\begin{align}\label{DT/PT}
I(x, y)/M(-x)^{\chi(X)}=P(x, y). 
\end{align}
Here $M(x)$ is the MacMahon function 
\begin{align*}
M(x)=\prod_{n\ge 0}(1-x^n)^{-n}. 
\end{align*}

\section{Proof of Theorem~\ref{intro:main}}\label{sec:proof}
In this section, we give a proof of Theorem~\ref{intro:main}. 
In what follows, $X$ is a smooth projective 
Calabi-Yau 3-fold such that $\Pic(X)$ is 
generated by $\oO_X(H)$ for an ample divisor $H$ 
in $X$. 
\subsection{One parameter family of tilt stability}
We construct a one parameter family of tilt 
stability conditions 
depending on a choice of 
a numerical class, 
\begin{align}\label{v:class}
v=(0, mH, -\beta, -n) \in H^{\ast}(X, \mathbb{Q}). 
\end{align}
for $m\in \mathbb{Z}_{>0}$. 
In this section, we always fix 
$v$ as in (\ref{v:class}). 
We also fix $B \in H^2(X, \mathbb{Q})$ to be
\begin{align}\label{B:choice}
B=-\frac{\beta \cdot H}{mH^3}H. 
\end{align}
By a simple calculation, we have  
\begin{align}\label{calcu}
e^{-B}v=\left(0, mH, 0, \frac{H^3}{24}m^3(1-\eta) \right)
\end{align}
where $\eta$ is given by 
\begin{align}\label{def:eta}
\eta= \left(n+ \frac{(H\cdot \beta)^2}{2mH^3} + 
\frac{H^3}{24}m^3 \right)/\frac{H^3}{24}m^3.
\end{align}
In what follows, 
the twisted Chern character
$\ch^{B}(\ast)=e^{-B}\ch(\ast)$ 
is taken 
with respect to the $B$ chosen in 
(\ref{B:choice}).  
For $t\in \mathbb{R}_{>0}$, we set
\begin{align*}
\nu_{t} \cneq \nu_{B, tH} \colon 
\bB_{B, H} \setminus \{0\} \to \mathbb{R} \cup \{ \infty\}. 
\end{align*}
Note that $\bB_{B, tH}=\bB_{B, H}$ for any $t\in \mathbb{R}_{>0}$, 
so the above slope function is well-defined. 
Also any object $E\in \bB_{B, H}$ with 
$\ch(E)=v$, or equivalently 
$\ch^{B}(E)$ is equal to the RHS of (\ref{calcu}), 
satisfies $\nu_t(E)=0$. 

\subsection{Wall and chamber structure}\label{subsec:wall}
As proved in~\cite[Corollary~3.3.3]{BMT},
there is a wall and chamber structure 
on the set of tilt stability. 
In the setting of the previous subsection, 
there is a discrete subset of walls,  
\begin{align*}
\sS \subset \mathbb{R}_{>0},
\end{align*}
such that the set of 
$\nu_t$-semistable objects $E$
with $\ch(E)=v$
is constant 
when $t$ lies in a connected component of 
$\mathbb{R}_{>0} \setminus \sS$, 
but jumps at walls. 
We first show that there is no wall
when $t$ is bigger than $\sqrt{3}m/2$. 
\begin{lem}\label{wall:S}
We have $\sS \subset \{ t\in \mathbb{R}_{>0} : t\le \sqrt{3}m/2\}$. 
\end{lem}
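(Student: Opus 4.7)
The plan is to extract the numerical wall equation from the condition $\nu_{t_0}(F)=0$ for a destabilizing subobject $F$, and then bound $t_0$ by applying the classical Bogomolov--Gieseker inequality (Theorem~\ref{thm:class}) symmetrically to $F$ and its quotient $G$. Fix $E\in\bB_{B,H}$ with $\ch(E)=v$; by (\ref{calcu}), $\nu_t(E)=0$ for every $t>0$. A wall at $t=t_0$ comes from an exact sequence $0\to F\to E\to G\to 0$ in $\bB_{B,H}$ with $F$ and $G$ both $\nu_{t_0}$-semistable of slope zero, such that the order between $\nu_t(F)$ and $\nu_t(E)=0$ flips as $t$ crosses $t_0$.

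Write $r=\ch^B_0(F)\in\mathbb{Z}$ and $\ch^B_1(F)=aH$ for some $a\in\mathbb{Q}$ (the latter is legitimate since $H^1(\oO_X)=0$ and $\Pic(X)=\mathbb{Z}H$ force $c_1(F)\in\mathbb{Z}H$); the subobject remark for $\bB_{B,H}$ gives $0\le a\le m$, and hence $\ch^B_1(G)=(m-a)H$. A genuine sign flip of $\nu_t(F)$ through $0$ at $t_0$ forces $r\ne 0$ and $a\in(0,m)$, since otherwise $\nu_t(F)$ is of constant sign in $t$ or one of $\nu_t(F),\nu_t(G)$ equals $\infty$ for every $t$; in particular $|r|\ge 1$. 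The equation $\nu_{t_0}(F)=0$ then rearranges to
\begin{align*}
t_0^2=\frac{6\,H\,\ch^B_2(F)}{r\,H^3}.
\end{align*}
Applying Theorem~\ref{thm:class} in the form (\ref{BG:thm2}) to both of the $\nu_{t_0}$-semistable objects $F$ and $G$, using $\ch^B_0(G)=-r$ and $H\,\ch^B_2(G)=-H\,\ch^B_2(F)$, yields
\begin{align*}
a^2 H^3\ge 2r\cdot H\,\ch^B_2(F)\quad\text{and}\quad (m-a)^2 H^3\ge 2r\cdot H\,\ch^B_2(F).
\end{align*}

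Substituting the smaller of these into the wall equation gives $t_0^2\le 3\min(a^2,(m-a)^2)/r^2$, and the elementary inequality $\min(a^2,(m-a)^2)\le m^2/4$ on $[0,m]$ together with $r^2\ge 1$ produces $t_0\le\sqrt{3}\,m/2$. The step that requires care is the joint use of $F$ and $G$: applying BG only to $F$ yields the weaker bound $t_0^2\le 3a^2/r^2\le 3m^2$, a factor of two too weak. The complementary estimate from $G$, combined with the trivial observation that at least one of $a$ and $m-a$ is at most $m/2$, is precisely what sharpens the constant from $\sqrt{3}$ to $\sqrt{3}/2$. The case $r<0$ reduces to $r>0$ by the symmetry $r\leftrightarrow -r$ and $H\,\ch^B_2(F)\leftrightarrow -H\,\ch^B_2(F)$, so no separate treatment is needed.
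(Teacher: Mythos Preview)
Your proof is correct and follows essentially the same route as the paper's: set up the destabilizing sequence at a wall, extract the equation $t_0^2 = 6\,H\ch^B_2(F)/(rH^3)$, apply the Bogomolov--Gieseker inequality (Theorem~\ref{thm:class}) to both the sub and the quotient, and use $\min(a,m-a)\le m/2$ together with $r^2\ge 1$ to conclude. The only cosmetic difference is that the paper pins down the signs of $r_1,r_2$ from the direction of the slope flip before applying Theorem~\ref{thm:class} to each $E_i$ separately, whereas you work directly with $r^2$ and the relation $H\ch^B_2(G)=-H\ch^B_2(F)$ coming from $\ch^B_2(E)=0$; the two arguments are equivalent.
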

\begin{proof}
Let us take $t_0 \in \sS$. Then there is a $\nu_{t_0}$-semistable 
object $E\in \bB_{B, H}$ with $\ch(E)=v$
 and an exact sequence in 
$\bB_{B, H}$
\begin{align*}
0 \to E_1 \to E \to E_2 \to 0 
\end{align*}
such that $\nu_{t_0}(E_1)=\nu_{t_0}(E_2)=0$ and 
\begin{align}\label{nu:ineq}
\nu_{t_0 +\epsilon}(E_1)< \nu_{t_0 +\epsilon}(E_2), \ 
\nu_{t_0 -\epsilon}(E_1)> \nu_{t_0 -\epsilon}(E_2)
\end{align}
for $0< \epsilon \ll 1$.
We write $\ch^{B}(E_i)=(r_i, D_i, \beta_i, n_i)$ 
for $i=1, 2$. Then the condition 
$\nu_{t_0}(E_i)=0$ is equivalent to 
\begin{align*}
-\frac{H^3}{6}t_0^2 r_i + \beta_i H=0. 
\end{align*}
Combined with (\ref{nu:ineq}), we obtain 
$r_1>0$ and $r_2<0$, hence $r_i^2 \ge 1$. 
 On the other hand, 
since $E_1$ and $E_2$ are $\nu_{t_0}$-semistable, 
we have the 
inequality 
$D_i^2 H \ge 2r_i \beta_i H$
by Theorem~\ref{thm:class}. 
Also since $D_1 +D_2=mH$, 
either $i=1$ or $2$ satisfies 
$D_i^2 H \le m^2 H^3/4$. 
For such $i$, we obtain 
\begin{align*}
\frac{H^3}{4}m^2 \ge 
D_i^2 H \ge 2r_i \beta_i H = \frac{H^3}{3}r_i^2 t_0^2 \ge \frac{H^3}{3}t_0^2
\end{align*}
for $i=1$ or $2$. 
The above inequalities imply
 $t_0 \le \sqrt{3}m/2$ as claimed. 
\end{proof}
By the above lemma, the region 
$\{ t\in \mathbb{R}_{>0} : t> \sqrt{3}m/2\}$
is contained in a chamber of
$\mathbb{R}_{>0} \setminus \sS$. 
Next we see that the tilt semistable objects in
this chamber coincide with slope semistable sheaves
in $\Coh_{\le 2}(X)$. 
\begin{prop}\label{prop:larget}
For $t>\sqrt{3}m/2$, an object
$E \in \bB_{B, H}$ 
with $\ch(E)=v$
is $\nu_{t}$-semistable 
if and only if $E$ is 
an $\mu_{H, 2}$-semistable sheaf
in $\Coh_{\le 2}(X)$. 
\end{prop}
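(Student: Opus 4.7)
The plan is to invoke Lemma~\ref{wall:S}, by which the set of $\nu_t$-semistable objects of class $v$ is constant on the chamber $t>\sqrt{3}m/2$, so it suffices to establish the equivalence in the limit $t\to\infty$. Expanding
\[
\nu_t(F) \;=\; \frac{H\cdot\ch^{B}_2(F)}{t\,H^2\ch^{B}_1(F)} \;-\; \frac{t\,H^3\,\ch^{B}_0(F)}{6\,H^2\ch^{B}_1(F)},
\]
$\nu_t$-comparisons are governed, to leading order in $t$, by $\ch^{B}_0$, and at subleading order by $H\ch^{B}_2/H^2\ch^{B}_1$; since $B$ is a rational multiple of $H$, this subleading slope coincides with the $\mu_{H,2}$-slope on $2$-dimensional sheaves.

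\textbf{The direction $(\Rightarrow)$.} First I would show that $E$ is a sheaf. The triangle $H^{-1}(E)[1]\to E\to H^0(E)$ realizes $H^{-1}(E)[1]$ as a subobject of $E$ in $\bB_{B,H}$ with $\ch^{B}_0<0$, so if $H^{-1}(E)\neq 0$ then $\nu_t(H^{-1}(E)[1])\to +\infty$, contradicting $\nu_t(E)=0$. With $\ch_0(E)=0$ and $\ch_1(E)=mH$, the sheaf $E$ is then $2$-dimensional torsion; purity follows because any $1$-dimensional subsheaf $T\subset E$ has $\ch^{B}_1(T)=0$ and hence $\nu_t(T)=\infty$. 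For any $2$-dimensional subsheaf $F\subset E$, both $F$ and $E$ have $\ch^{B}_0=0$, so the slope identity above makes $\nu_t(F)\le\nu_t(E)$ equivalent to $\mu_{H,2}(F)\le\mu_{H,2}(E)$.

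\textbf{The direction $(\Leftarrow)$ and main obstacle.} For the converse, let $E$ be pure $2$-dimensional and $\mu_{H,2}$-semistable, so $E\in\tT_{B,H}\subset\bB_{B,H}$. Given a subobject $A\hookrightarrow E$ in $\bB_{B,H}$ with cokernel $C$, the long exact cohomology sequence gives $H^{-1}(A)=0$ together with $0\to H^{-1}(C)\to A\to A'\to 0$, where $A'\subset E$. If $H^{-1}(C)=0$, then $A=A'$ is a $2$-dimensional subsheaf of $E$ and the slope identity combined with $\mu_{H,2}$-semistability of $E$ gives $\nu_t(A)\le\nu_t(E)$. If $H^{-1}(C)\neq 0$ the main subtlety arises: while $\ch^{B}_0(A)>0$ makes the leading term $-tH^3\ch^{B}_0(A)/(6H^2\ch^{B}_1(A))$ heuristically drive $\nu_t(A)\to -\infty$, one must first ensure strict positivity $H^2\ch^{B}_1(A)>0$. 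I would resolve this by observing that $H^{-1}(C)$, being torsion-free, embeds into $A/T(A)$, so $A/T(A)\neq 0$; as $A\in\tT_{B,H}$, its torsion-free quotient has all HN slopes strictly positive, yielding $H^2\ch^{B}_1(A/T(A))>0$ and hence $H^2\ch^{B}_1(A)>0$. This secures $\nu_t(A)<\nu_t(E)$ for $t$ large, and Lemma~\ref{wall:S} then propagates the inequality to all $t>\sqrt{3}m/2$.
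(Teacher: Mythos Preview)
Your forward direction $(\Rightarrow)$ is fine and essentially matches the paper's argument; once you know $E$ is $\nu_t$-semistable for all $t>\sqrt{3}m/2$, the asymptotics of $\nu_t(\hH^{-1}(E)[1])$ force $E$ to be a sheaf, and then $\nu_t$ restricted to $\Coh_{\le 2}(X)$ agrees with $\mu_{B,tH,2}$ (hence with $\mu_{H,2}$).

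The converse $(\Leftarrow)$ has a genuine gap. You correctly show that any subobject $A\subset E$ in $\bB_{B,H}$ with $\hH^{-1}(C)\neq 0$ is a sheaf of positive rank with $H^2\ch^{B}_1(A)>0$, so for that \emph{fixed} $A$ one has $\nu_t(A)\to -\infty$ and hence $\nu_t(A)<0$ once $t>T_A$. The problem is uniformity: to conclude that $E$ is $\nu_t$-semistable for some (hence all, by Lemma~\ref{wall:S}) $t>\sqrt{3}m/2$, you need a single $t$ that works for every subobject simultaneously, and the threshold $T_A$ a priori depends on $A$ through $H\ch^{B}_2(A)$, which you have not bounded. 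Your appeal to Lemma~\ref{wall:S} does not fix this: that lemma says only that the set of $\nu_t$-semistable objects of class $v$ is constant on $(\sqrt{3}m/2,\infty)$; it does not assert that, for a fixed $E$ of class $v$ and a fixed subobject $A$, the sign of $\nu_t(A)-\nu_t(E)$ is constant there. Indeed the numerical locus $\{t:\nu_t(A)=0\}$ depends on $\ch^{B}_2(A)$ and is not controlled by Lemma~\ref{wall:S}.

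The paper closes exactly this gap by a different mechanism. Assuming $E$ is not $\nu_t$-semistable for some (hence all) $t>\sqrt{3}m/2$, one picks $t>\sqrt{3}m$ and takes the first Harder--Narasimhan subobject $E_1$, which is $\nu_t$-semistable with $\nu_t(E_1)>0$. Now Theorem~\ref{thm:class} (the Bogomolov--Gieseker inequality for tilt-semistable objects) applied to $E_1$ gives
\[
D_1^2 H \;\ge\; 2r_1\,\beta_1 H \;\ge\; \tfrac{H^3}{3}\,r_1^2 t^2,
\]
while $D_1^2 H\le m^2 H^3$; this forces $r_1=0$ once $t>\sqrt{3}m$, reducing to the sheaf-theoretic case you already handled. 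In other words, the missing uniform bound on $H\ch^{B}_2(A)$ is supplied by the tilt Bogomolov--Gieseker inequality applied to the \emph{semistable} destabiliser, and this is the essential input your argument lacks.
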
 
\begin{proof}
First suppose that $E\in \bB_{B, H}$
with $\ch(E)=v$ is 
$\nu_t$-semistable for some $t>\sqrt{3}m/2$. 
By Lemma~\ref{wall:S}, this is equivalent to the 
fact that 
$E$ is $\nu_t$-semistable for any $t>\sqrt{3}m/2$.  
We have the exact sequence in 
$\bB_{B, H}$, 
\begin{align*}
0 \to \hH^{-1}(E)[1] \to E \to \hH^0(E) \to 0. 
\end{align*}
Suppose that $\hH^{-1}(E) \neq 0$. 
Then we have 
\begin{align*}
\nu_t(\hH^{-1}(E)[1])>0
\end{align*}
for $t\gg 0$, which contradicts to the fact
that $E$ is 
$\nu_t$-semistable for any $t>\sqrt{3}m/2$. 
Hence $\hH^{-1}(E)=0$
and $E$ is 
a torsion sheaf, i.e. $E\in \Coh_{\le 2}(X)$. 
Noting that $\nu_{t}=\mu_{B, tH, 2}$ on $\Coh_{\le 2}(X)$
and $B$, $tH$ are proportional to $H$, 
we conclude that $E$ is a $\mu_{H, 2}$-semistable sheaf in 
$\Coh_{\le 2}(X)$. 

Conversely, take a $\mu_{H, 2}$-semistable sheaf $E \in \Coh_{\le 2}(X)$
with $\ch(E)=v$.  
Note that $E \in \bB_{B, H}$. Suppose that $E$ is not $\nu_t$-semistable 
for some $t > \sqrt{3}m/2$.  
By Lemma~\ref{wall:S}, this is equivalent to the fact that 
$E$ is not $\nu_t$-semistable for any $t> \sqrt{3}m/2$, 
hence we may assume that $t>\sqrt{3}m$. 
There is an exact sequence in $\bB_{B, H}$, 
\begin{align}\label{E1E2}
0 \to E_1 \to E \to E_2 \to 0, 
\end{align}
such that $E_1$ is $\nu_t$-semistable 
and $\nu_t(E_1)>\nu_t(E_2)$. 
Let us write $\ch^{B}(E_1)=(r_1, D_1, \beta_1, n_1)$. 
Since $E\in \Coh_{\le 2}(X)$, we have 
$E_1 \in \Coh(X)$, hence $r_1 \ge 0$. 
By $\nu_t(E_1) \ge 0$ and Theorem~\ref{thm:class}, 
we obtain the inequalities, 
\begin{align*}
D_1^2 H \ge 2r_1 \beta_1 H \ge \frac{H^3}{3}t^2 r_1^2. 
\end{align*}
Suppose that $r_1>0$. 
Since $\ch^{B}_1(E)^2 H \ge D_1^2 H$, we obtain the 
inequality
$m^2 H^3 \ge H^3 t^2/3$, which contradicts to 
$t> \sqrt{3}m$. 
Therefore we have 
$r_1=0$, and the sequence (\ref{E1E2})
is an exact sequence in $\Coh_{\le 2}(X)$. 
However the $\mu_{H, 2}$-stability of 
$E$ implies 
$\nu_t(E_1) \le \nu_t(E_2)$, a contradiction. 
\end{proof}
As a corollary, we can show that
Conjecture~\ref{intro:conj} (i) is true 
under the assumption that Conjecture~\ref{conj:BMT}
is true: 
\begin{cor}\label{cor:conji}
Suppose that Conjecture~\ref{conj:BMT}
is true. Then if there is an $\mu_{H, 2}$-semistable 
sheaf $E \in \Coh_{\le 2}(X)$ with $\ch(E)=(0, mH, -\beta, -n)$, 
then we have 
\begin{align}\label{ineq:mH}
-\frac{H^3}{24}m^3 \le n+ \frac{(H\cdot \beta)^2}{2mH^3}.
\end{align}
In particular if $\DT_{H}(0, mH, -\beta, -n) \neq 0$, 
then the inequality (\ref{ineq:mH}) is satisfied.  
\end{cor}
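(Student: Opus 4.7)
The plan is to apply the conjectural inequality~(\ref{conj:ineq:ch3}) to $E$ at tilt parameters $t$ slightly above the threshold $t_\ast = \sqrt{3}\,m/2$ supplied by Lemma~\ref{wall:S} and Proposition~\ref{prop:larget}, and then to let $t \searrow t_\ast$. The numerical coincidence that makes this work is that $\tfrac{1}{18}\omega^2 \ch_1^B(E)$ evaluated at $\omega = t_\ast H$ equals exactly $\tfrac{H^3}{24}m^3$, which is precisely the value of $\ch_3^B(E)$ corresponding to $\eta = 0$ in~(\ref{calcu}). The absence of any slack between these two values is what makes the corollary work.

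Concretely, suppose $E \in \Coh_{\le 2}(X)$ is $\mu_{H,2}$-semistable with $\ch(E) = v = (0, mH, -\beta, -n)$. Since $E$ is torsion it lies in $\tT_{B, H} \subset \bB_{B, H}$ for $B$ as in~(\ref{B:choice}), and by~(\ref{calcu}) one has
\begin{align*}
\ch^B(E) = \left(\, 0,\; mH,\; 0,\; \tfrac{H^3}{24} m^3 (1-\eta)\, \right).
\end{align*}
In particular $\mathrm{Im}\,Z_{B, tH}(E) = tH \cdot \ch_2^B(E) = 0$ for every $t > 0$ whereas $\omega^2 \ch_1^B(E) = m H^3 t^2 \neq 0$, so $\nu_t(E) = 0$ for every $t > 0$. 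By Proposition~\ref{prop:larget}, $E$ is $\nu_t$-semistable for every $t > \sqrt{3}\,m/2$, and Conjecture~\ref{conj:BMT} then gives
\begin{align*}
\tfrac{H^3}{24} m^3(1-\eta) \;=\; \ch_3^B(E) \;\le\; \tfrac{1}{18}(tH)^2 (mH) \;=\; \tfrac{H^3 m}{18}\, t^2
\end{align*}
for every such $t$. Letting $t \searrow \sqrt{3}\,m/2$ the right hand side tends to $\tfrac{H^3 m^3}{24}$, whence $1-\eta \le 1$, i.e.\ $\eta \ge 0$; unraveling~(\ref{def:eta}) yields exactly~(\ref{ineq:mH}).

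For the final assertion, if $\DT_H(0, mH, -\beta, -n) \ne 0$ then by construction the moduli stack $\mM_H^{ss}(0, mH, -\beta, -n)$ of slope semistable sheaves is non-empty, providing an $E$ to which the first part applies (cf.~Remark~\ref{rmk:check} if one instead interprets $H$-semistability in the Gieseker sense). The argument therefore reduces to a single sharp numerical comparison; the main obstacle is not the reasoning itself but the hypothesis, namely that the delicate factor $\tfrac{1}{18}$ in~(\ref{conj:ineq:ch3}) matches the wall threshold $\sqrt{3}\,m/2$ coming from the classical inequality~(\ref{BG:thm1}) of Theorem~\ref{thm:class} on the nose---the weaker constant $\tfrac{1}{2}$ of~(\ref{weak}) would give a vacuous bound, which is the whole reason Conjecture~\ref{conj:BMT} in its strong form is needed here.
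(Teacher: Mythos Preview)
Your proof is correct and follows essentially the same approach as the paper: apply Conjecture~\ref{conj:BMT} to $E$ (which is $\nu_t$-semistable with $\nu_t(E)=0$ by Proposition~\ref{prop:larget}) near the threshold $t=\sqrt{3}m/2$, and read off $\eta\ge 0$. The only cosmetic difference is that the paper applies the inequality directly at $t=\sqrt{3}m/2$ (relying implicitly on the fact that semistability persists in the limit since $\bB_{B,tH}=\bB_{B,H}$ is fixed and $\nu_t$ varies continuously), whereas you apply it for $t>\sqrt{3}m/2$ and let $t\searrow\sqrt{3}m/2$; these are equivalent.
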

\begin{proof}
Let $E$ be an $\mu_{H, 2}$-semistable 
object with $\ch(E)=(0, mH, -\beta, -n)$. 
By Proposition~\ref{prop:larget},
$E$ is $\nu_{\sqrt{3}m/2}$-semistable 
with $\nu_{\sqrt{3}m/2}(E)=0$. 
If we assume Conjecture~\ref{conj:BMT}, then 
we have 
\begin{align*}
\frac{H^3}{24}m^3(1-\eta) \le \frac{1}{18}\cdot mH \cdot \left( \frac{\sqrt{3}}{2}m \right)^2 = \frac{H^3}{24}m^3. 
\end{align*}
Here $\eta$ is given by (\ref{def:eta}). 
The above inequality implies $\eta \ge 0$, 
which is equivalent to the inequality (\ref{ineq:mH}). 

If $\DT_H(0, mH, -\beta, -n) \neq 0$, then there is 
an $H$-semistable sheaf $E \in \Coh(X)$ with 
$\ch(E)=(0, mH, -\beta, -n)$. 
By Remark~\ref{rmk:check}, $E$ is also an 
$\mu_{H, 2}$-semistable sheaf in $\Coh_{\le 2}(X)$, 
hence the inequality (\ref{ineq:mH}) holds. 
\end{proof}
\begin{rmk}
The inequality (\ref{ineq:mH}) is easy to prove 
if there is an $\mu_{H, 2}$-semistable sheaf 
$E$ with $\ch(E)=(0, mH, -\beta, -n)$
supported on a smooth member $P \in \lvert mH \rvert$. 
Indeed, such a sheaf is written as (\ref{fo:form}), and 
the inequality (\ref{ineq:mH}) can be easily checked 
using the Hodge index theorem. However when the support of 
$E$ is singular, we are not 
able to prove (\ref{ineq:mH}) without assuming 
Conjecture~\ref{conj:BMT}. 
As for the issue of the thickening of the support
of $E$, see Theorem~\ref{thm:append}. 
\end{rmk}
Later we will also use the following corollary. 
We use the notation in the previous subsection.  
\begin{cor}\label{cor:mustable}
Take $(\xi, \mu) \in \mathbb{R}^2$
so that $\xi>1$, $\mu>0$ or $\xi=1$, $0<\mu<3/2$. 
Suppose that Conjecture~\ref{conj:BMT} is true. 
Then there is $m(\xi, \mu)>0$, which depends only on 
$\xi$, $\mu$ such that if $m>m(\xi, \mu)$ and 
$0\le \eta <\mu/m^{\xi}$, then any $\mu_{H, 2}$-semistable 
sheaf $E \in \Coh_{\le 2}(X)$ with $\ch(E)=v$ is 
$\mu_{H, 2}$-stable. 
\end{cor}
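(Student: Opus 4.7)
My plan is to argue by contradiction: assume there is a $\mu_{H,2}$-semistable sheaf $E\in\Coh_{\le 2}(X)$ with $\ch(E)=v$ that is strictly semistable. First I will pass to a Jordan--H\"older filtration of $E$ in $\Coh_{\le 2}(X)$ with $\mu_{H,2}$-stable factors $E_1, \ldots, E_k$, $k\ge 2$. Since $E$ is pure of dimension two and $\Pic(X)=\mathbb{Z}[H]$, each $E_i$ is pure two-dimensional with $\ch(E_i)=(0, m_i H, -\beta_i, -n_i)$ for positive integers $m_i$ summing to $m$; the common $\mu_{H,2}$-slope condition will yield the key identity $H\cdot \beta_i = (m_i/m)\,H\cdot \beta$.

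The main obstacle is the technical step of promoting each factor $E_i$ to a $\nu_t$-semistable object in the tilted heart $\bB_{B,H}$, where the twisting class $B$ of (\ref{B:choice}) was chosen for $v$ rather than for $\ch(E_i)$. The arguments of Lemma~\ref{wall:S} and Proposition~\ref{prop:larget} use only the classical Bogomolov--Gieseker inequality of Theorem~\ref{thm:class} together with the fact that the test objects are torsion sheaves, so they should carry over verbatim with $m$ replaced by $m_i$ in the wall bound, yielding $\nu_t$-semistability of $E_i$ for every $t > \sqrt{3}m_i/2$. A direct computation of $\ch^B_2(E_i)$ using the common-slope identity will show $H\cdot\ch^B_2(E_i)=0$, so $\nu_t(E_i)=0$ for every $t>0$, and then Conjecture~\ref{conj:BMT} applied in the limit $t\to (\sqrt{3}m_i/2)^+$ will give
\begin{align*}
\ch^B_3(E_i) \le \frac{1}{18}\cdot \frac{3m_i^2}{4}\cdot m_i H^3 = \frac{H^3 m_i^3}{24}.
\end{align*}

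Next I will expand $\ch^B_3(E_i) = -n_i - m_i(\beta\cdot H)^2/(2m^2 H^3)$ from the definition $\ch^B=e^{-B}\ch$ and sum over $i$; rewriting the left-hand side in terms of $\eta$ via (\ref{def:eta}) will produce the core inequality $m^3(1-\eta) \le \sum_{i=1}^k m_i^3$. To finish, I will invoke the elementary partition bound that, among partitions $m=m_1+\dots+m_k$ with $k\ge 2$ and each $m_i\ge 1$, the sum of cubes is maximized by $(1,m-1)$, yielding $\sum m_i^3 \le (m-1)^3+1 = m^3 - 3m^2 + 3m$; hence $\eta \ge 3(m-1)/m^2$. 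Under either $\xi>1$, $\mu>0$ or $\xi=1$, $0<\mu<3/2$, a direct calculation shows $\mu/m^\xi < 3(m-1)/m^2$ for all $m$ larger than an explicit threshold $m(\xi,\mu)$ depending only on $\xi$ and $\mu$, contradicting the assumption $\eta < \mu/m^\xi$ and completing the proof.
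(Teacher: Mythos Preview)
Your proof is correct and follows essentially the same route as the paper's. The only cosmetic difference is that the paper takes a single destabilizing short exact sequence ($k=2$) and invokes Corollary~\ref{cor:conji} directly to obtain $\ch^B_3(E_i)\le H^3 m_i^3/24$ for each factor (equivalently $\eta_i\ge 0$), whereas you pass to a full Jordan--H\"older filtration and re-derive that bound by re-running Proposition~\ref{prop:larget}; either way one lands on the core inequality $m^3(1-\eta)\le\sum_i m_i^3$ and the same partition estimate $\sum_i m_i^3 \le (m-1)^3+1$.
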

\begin{proof}
For an $\mu_{H, 2}$-semistable sheaf $E$ with $\ch(E)=v$, 
suppose that $E$ is not $\mu_{H, 2}$-stable. Then 
there is an exact sequence in $\Coh_{\le 2}(X)$
\begin{align*}
0 \to E_1 \to E \to E_2 \to 0
\end{align*}
such that $\mu_{H, 2}(E_1)=\mu_{H, 2}(E_2)$. 
This is equivalent to 
$\mu_{B, H, 2}(E_1)=\mu_{B, H, 2}(E_2)=0$, where 
$B$ is given by (\ref{B:choice}).
Hence we can write 
\begin{align*}
\ch^{B}(E_i)=\left(0, m_i H, 0, \frac{H^3}{24}m_i^3(1-\eta_i)   \right)
\end{align*} 
for some $m_i \in \mathbb{Z}_{\ge 1}$, $\eta_i \in \mathbb{Q}$
satisfying $m_1 +m_2=m$ and 
\begin{align}\label{mm1m2}
\frac{H^3}{24}m^3(1-\eta)
=\frac{H^3}{24}m_1^3(1-\eta_1)
+\frac{H^3}{24}m_2^3(1-\eta_2). 
\end{align}
Also note that $\eta_i \ge 0$ by Corollary~\ref{cor:conji}. 
On the other hand, we have 
\begin{align}\notag
&m^3(1-\eta)-m_1^3(1-\eta_1)-m_2^3(1-\eta_2) \\
\label{ineq:m1}
&\ge m^3-m^3 \eta -m(m^2-3m_1m_2) \\
\label{ineq:m2}
& \ge 3m^2 -3m -m^3 \eta \\
\label{ineq:m3}
& > 3m^2 -\mu m^{3-\xi} -3m.  
\end{align}
Here we have used $\eta_i \ge 0$ in (\ref{ineq:m1})
and 
\begin{align*}
m_1^3 + m_2^3 = m(m^2 -3m_1 m_2)
\end{align*}
by $m_1 + m_2=m$. 
The inequality (\ref{ineq:m2})
follows from
$m_1 m_2 \ge m-1$ 
since $m_1, m_2 \ge 1$, 
and (\ref{ineq:m3}) follows from 
$\eta<\mu/m^{\xi}$. 
By our choice of $(\xi, \mu)$, the leading coefficient 
of the RHS of (\ref{ineq:m3}) is positive. 
Hence there is $m(\xi, \mu)>0$, which depends only on 
$\mu$ and $\xi$ such that  $m>m(\xi, \mu)$ implies 
(\ref{ineq:m3}) is positive. This contradicts (\ref{mm1m2}), 
hence $E$ is $\mu_{H, 2}$-stable.   
\end{proof}
Finally in this subsection, we see that 
there are no tilt semistable objects 
when $\eta<1$ and $t$ is small. 
\begin{lem}\label{lem:tsmall}
Suppose that Conjecture~\ref{conj:BMT} is true, 
and $\eta$ satisfies $\eta<1$. 
Then
there is no $\nu_t$-semistable object
$E\in \bB_{B, H}$ with $\ch(E)=v$, if 
\begin{align}\label{t:small}
t<\frac{\sqrt{3}}{2}m \sqrt{1-\eta}. 
\end{align}
\end{lem}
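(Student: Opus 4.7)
The plan is a direct application of Conjecture~\ref{conj:BMT} with $\omega = tH$, using the numerical data of $v$ computed in (\ref{calcu}). Suppose for contradiction that there exists a $\nu_t$-semistable $E \in \bB_{B,H}$ with $\ch(E)=v$. From (\ref{calcu}) the twisted Chern character is
\[
\ch^{B}(E) = \left(0,\; mH,\; 0,\; \tfrac{H^3}{24}m^3(1-\eta)\right).
\]
In particular $\ch^{B}_0(E)=0$ and $\ch^{B}_2(E)=0$, so the imaginary part $\Imm Z_{B,tH}(E) = tH\cdot \ch^{B}_2(E) - \tfrac{1}{6}t^3 H^3 \ch^{B}_0(E)$ vanishes, while $(tH)^2 \ch^{B}_1(E) = mt^2 H^3 > 0$. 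Hence $\nu_{t}(E)=0$ and the slope is finite, so $E$ is a genuine tilt semistable object of slope zero with respect to the polarization $(B, tH)$.

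Now I would invoke Conjecture~\ref{conj:BMT} with $\omega = tH$ (this is permissible since $\bB_{B,tH}=\bB_{B,H}$ and $E$ is $\nu_{B,tH}$-semistable of slope zero). The conjectural inequality reads
\[
\ch^{B}_3(E) \le \frac{1}{18}\,(tH)^2\, \ch^{B}_1(E).
\]
Substituting the values above, the left-hand side equals $\tfrac{H^3}{24}m^3(1-\eta)$ and the right-hand side equals $\tfrac{1}{18}\cdot t^2 H^2 \cdot mH = \tfrac{t^2 m H^3}{18}$. Dividing out $H^3$ and rearranging yields $t^2 \ge \tfrac{3}{4}m^2(1-\eta)$, equivalently
\[
t \ge \tfrac{\sqrt{3}}{2}\, m\, \sqrt{1-\eta}.
\]
(Here we use $\eta<1$ so that $\sqrt{1-\eta}$ is a real positive number.) This directly contradicts the hypothesis (\ref{t:small}), proving the lemma.

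There is no real obstacle to this argument: the statement is essentially a numerical repackaging of Conjecture~\ref{conj:BMT}, with the choice of twist $B$ in (\ref{B:choice}) specifically arranged to place $E$ at slope zero so that the BMT inequality applies. The only points requiring care are the verification that $\omega^2 \ch^{B}_1(E) \ne 0$ (so the slope is finite rather than $\infty$, and Conjecture~\ref{conj:BMT} genuinely applies) and that the category $\bB_{B,tH}$ coincides with $\bB_{B,H}$ for all $t>0$, both of which are immediate from the setup in Subsection~\ref{subsec:tilt}.
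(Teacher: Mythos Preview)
Your proof is correct and is essentially the same as the paper's own argument: both assume such an $E$ exists, apply Conjecture~\ref{conj:BMT} with $\omega=tH$ to obtain $\frac{H^3}{24}m^3(1-\eta)\le \frac{H^3}{18}mt^2$, and observe that this contradicts (\ref{t:small}). Your additional verification that $\nu_t(E)=0$ with finite slope makes the application of the conjecture explicit, but the approach is identical.
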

\begin{proof}
Let $E \in \bB_{B, H}$ be an $\nu_t$-semistable object with 
$\ch(E)=v$. If Conjecture~\ref{conj:BMT} is true, then 
\begin{align*}
\frac{H^3}{24}m^3(1-\eta) \le \frac{H^3}{18}mt^2. 
\end{align*}
The above inequality is violated under the condition
(\ref{t:small}).
\end{proof}

\subsection{Moduli stacks of tilt semistable objects}
Similarly to the moduli theory of semistable sheaves, 
we can consider moduli stacks of tilt semistable objects
in $\bB_{B, H}$. 
Let $\mM$ be the stack of all the objects
in $\bB_{B, H}$.
An argument similar to~\cite[Lemma~4.7]{Tst3}
shows that $\mM$ is an Artin 
stack locally of finite type over $\mathbb{C}$. 
Moreover the stack $\mM$ is an open substack 
of the moduli stack of certain
 objects in $D^b \Coh(X)$
constructed by Inaba~\cite{Inaba}
or Lieblich~\cite{LIE}. 
For each $w \in H^{\ast}(X, \mathbb{Q})$, 
there is an abstract substack
\begin{align}\label{M:abstract}
\mM_t^{ss}(w) \subset \mM
\end{align}
which is the moduli stack of $\nu_t$-semistable 
objects $E\in \bB_{B, H}$ with $\ch(E)=w$. 
We need to prove that $\mM_t^{ss}(w)$
is a `good' moduli space. 
We have the following proposition:
\begin{prop}\label{prop:moduli}
The stack $\mM_t^{ss}(w)$ is an open substack of 
$\mM$, and it is an Artin 
stack of finite type over $\mathbb{C}$. 
\end{prop}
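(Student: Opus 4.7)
The plan is to establish Proposition~\ref{prop:moduli} in two main steps: boundedness of the family of $\nu_t$-semistable objects with fixed Chern character $w$, and openness of the $\nu_t$-semistability condition in flat families. Since the ambient stack $\mM$ is already known to be an Artin stack locally of finite type over $\mathbb{C}$ by the cited results of Inaba and Lieblich, these two properties together give the result: openness in $\mM$ yields the Artin stack structure, and boundedness upgrades this to finite type.

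For boundedness, I would proceed as follows. For $E \in \bB_{B,H}$ with $\ch(E) = w$, there is the cohomological exact sequence
\begin{align*}
0 \to \hH^{-1}(E)[1] \to E \to \hH^0(E) \to 0
\end{align*}
in $\bB_{B,H}$, with $\hH^{-1}(E) \in \fF_{B,H}$ and $\hH^0(E) \in \tT_{B,H}$. Using the sub-additivity $H^2 \cdot \ch_1^B(F) \le H^2 \cdot \ch_1^B(E)$ for subobjects $F \subset E$ in $\bB_{B,H}$ (as in the Remark preceding the definition of $Z_{B,\omega}$), together with $\nu_t(E) = 0$, one obtains an upper bound on $H^2 \cdot \ch_1^B(\hH^0(E))$ and correspondingly on $|H^2 \cdot \ch_1^B(\hH^{-1}(E))|$. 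This bounds the ranks and $\mu_{B,H}$-slopes of all the $\mu_{B,H}$-Harder-Narasimhan factors of $\hH^i(E)$. Applying the classical Bogomolov-Gieseker inequality of Theorem~\ref{thm:class} to each such HN factor bounds $\ch_2^B$, after which the constraint $\ch^B(E) = e^{-B}w$ together with $\nu_t$-semistability pins down $\ch_3^B$. Standard boundedness of $\mu_{B,H}$-semistable torsion-free sheaves with bounded discrete invariants (Grothendieck, Maruyama) then yields boundedness of the whole family.

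For openness, the boundedness implies that only finitely many Chern characters $w'$ can appear as those of a destabilizing subobject $E' \subset E$ in $\bB_{B,H}$ of some $\nu_t$-semistable $E$ with $\ch(E) = w$. For each such $w'$, one forms the relative stack over $\mM$ of short exact sequences $0 \to E' \to E \to E'' \to 0$ in $\bB_{B,H}$; this stack is of finite type by the boundedness applied to both $E'$ and $E''$. The locus where $\nu_t(E') > \nu_t(E)$ cuts out a constructible substack, whose image in $\mM$ is the non-semistable locus for the destabilizing class $w'$. Checking closedness under specialization via the valuative criterion (using that Harder-Narasimhan filtrations with respect to $\nu_t$ extend flatly across generic points, in the style of the standard arguments for Bridgeland-type stabilities) upgrades this image to a closed substack. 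Taking the union over the finite set of possible destabilizing classes $w'$ gives that the non-semistable locus is closed in $\mM$, so $\mM_t^{ss}(w)$ is open.

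The main obstacle will be the boundedness step: while the strategy parallels the classical argument for Gieseker or slope semistable sheaves, executing it in the tilted heart $\bB_{B,H}$ requires one to simultaneously control both cohomology sheaves $\hH^{-1}(E)$ and $\hH^0(E)$, whose $\mu_{B,H}$-HN filtrations can interact in subtle ways. The crucial tool that prevents pathologies is the non-negativity $H^2 \cdot \ch_1^B(F) \ge 0$ for every $F \in \bB_{B,H}$, which forces the combinatorial contributions of HN factors on both sides to couple with the equality $\nu_t(E)=0$ through a sum of non-negative quantities; together with the classical inequality (\ref{BG:thm2}) this rigidifies the numerics enough for the standard boundedness results to apply.
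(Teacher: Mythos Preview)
Your overall architecture---reduce openness and finite type to boundedness of the family of $\nu_t$-semistable objects with fixed $\ch$, exactly as in the K3 case~\cite{Tst3}---matches the paper. The openness discussion is adequate at the level of a sketch; the paper likewise just cites~\cite[Theorem~3.20]{Tst3} for this reduction.

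The genuine gap is in your boundedness argument, specifically the sentence ``the constraint $\ch^B(E)=e^{-B}w$ together with $\nu_t$-semistability pins down $\ch_3^B$.'' The tilt slope $\nu_t$ is built from $\ch_0^B,\ch_1^B,\ch_2^B$ only; $\ch_3^B$ does not enter at all. So tilt semistability gives you no direct handle on the third Chern character of the individual cohomology sheaves. You know the \emph{total} $\ch_3(E)$ from $w$, but this does not prevent $\ch_3(\hH^{-1}(E))$ and $\ch_3(\hH^0(E))$ from being simultaneously unbounded in opposite directions. Without a bound on $\ch_3$ of the pieces, neither Maruyama's nor Langer's boundedness theorems apply to them. (A minor related point: you write ``together with $\nu_t(E)=0$,'' but $w$ is arbitrary, so $\nu_t(w)$ need not vanish.)

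A second issue is that $\hH^0(E)\in\tT_{B,H}$ can have torsion in any dimension, so ``boundedness of $\mu_{B,H}$-semistable torsion-free sheaves'' does not cover it.

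The paper fills both gaps by refining the filtration: it decomposes $\hH^0(E)$ as $F\subset T\subset \hH^0(E)$ with $F\in\Coh_{\le 1}(X)$, $T$ the torsion part, and then bounds $\ch_3$ of each graded piece separately. For $\hH^{-1}(E)$ one first observes it is \emph{reflexive} (from $\Hom(\Coh_{\le 1}(X),\hH^{-1}(E)[1])=0$, which uses $\nu_t(w)<\infty$), and for reflexive sheaves bounding $\ch_{\le 2}$ suffices via Langer. For $\hH^0(E)/T$ and $T/F$ one passes to the double dual (resp.\ double $\eE xt^1$), which is reflexive hence bounded, and then bounds the length of the cokernel via a Quot-scheme argument. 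The hardest piece is $F\in\Coh_{\le 1}(X)$: here the paper proves a separate lemma (Lemma~\ref{lem:ch3}) using the derived dual $\mathbb{D}(-)$ and the condition $\Hom(\Coh_{\le 1}(X),E)=0$ to bound $\ch_3(F)$ from above. None of these steps is visible in your sketch, and the $\ch_3$ control is the crux of the argument.
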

\begin{proof}
A similar result for K3 surfaces
 is already obtained in~\cite{Tst3}, 
and we apply a similar argument. 
A proof similar to~\cite[Theorem~3.20]{Tst3}
shows that the problem can be reduced to 
showing the boundedness of $\nu_t$-semistable
objects $E\in \bB_{B, H}$ with $\ch(E)=w$. 
Furthermore if $\nu_t(w)=\infty$, then 
$E$ is 
contained in the following category, 
(cf.~\cite[Remark~3.2.2]{BMT},) 
\begin{align*}
\langle \uU[1], \Coh_{\le 1}(X) : 
\uU \mbox{ is } \mu_{B, H} \mbox{-stable with }
\mu_{B, H}(E)=0 \rangle. 
\end{align*}
In this case, the boundedness follows from
an argument similar to~\cite[Proposition~3.13]{Tolim}.
Below, we assume $\nu_t(w)<\infty$.  

For an $\nu_t$-semistable object
$E\in \bB_{B, H}$ with $\ch(E)=w$, 
we 
consider the filtration
\begin{align*}
F \subset T \subset \hH^0(E)
\end{align*}
where $T$ is the torsion part of $\hH^0(E)$
and $F$ is the maximal subsheaf of $T$ 
which is contained in $\Coh_{\le 1}(X)$. 
It is enough to show the boundedness of 
$\hH^{-1}(E)$, $F$, $T/F$ and $\hH^0(E)/T$. 

First we check the boundedness of $\hH^{-1}(E)$. 
Since $\nu_t(E)<\infty$, 
we have $\Hom(\Coh_{\le 1}(X), \hH^{-1}(E)[1])=0$. 
This immediately implies that $\hH^{-1}(E)$ is a reflexive 
sheaf. Also an argument similar to~\cite[Proposition~4.11]{Tst3}
shows that $\ch_i(\hH^{-1}(E))$ for $i=0, 1, 2$
have only a finite number of possibilities. 
Then applying~\cite[Theorem~4.4]{Langer}, 
the boundedness of $\hH^{-1}(E)$ follows.

Next we check the boundedness of $F$, $T/F$ and 
$\hH^0(E)/T$. 
Again, an argument similar to~\cite[Proposition~4.11]{Tst3}
shows that, for $i=0, 1, 2$, 
 $\ch_i(F)$, $\ch_i(T/F)$ and 
$\ch_i(\hH^0(E)/T)$ have only a finite number of possibilities. 
In order to apply~\cite[Theorem~4.4]{Langer}, 
we need to check that $\ch_3(F)$, $\ch_3(T/F)$
and $\ch_3(\hH^0(E)/T)$ have also a finite number of 
possibilities. 
Since the sum of them are equal to 
$\ch_3(\hH^{-1}(E)) + \int_{X} w$, it is enough to show that 
they are bounded above. 

As for the upper bound on $\ch_3(\hH^0(E)/T)$, we consider the exact sequence 
\begin{align*}
0 \to \hH^0(E)/T \to \left( \hH^0(E)/T \right)^{\vee \vee} \to F' \to 0
\end{align*}
where $F' \in \Coh_{\le 1}(X)$. 
Since the middle sheaf is reflexive, we can apply~\cite[Theorem~3.4]{Langer2}
and~\cite[Theorem~4.4]{Langer} to show that 
the middle sheaf is contained in a bounded family. 
Then by the boundedness of the Quot scheme, 
$\ch_3(F')$ can be shown to be bounded below, 
using an argument of~\cite[Lemma~3.10]{Tolim}. 
Therefore $\ch_3(\hH^0(E)/T)$ is bounded above. 
The upper bound on $\ch_3(T/F)$ is similarly obtained 
by taking $\eE xt_{X}^{1}(-, \oO_X)$
twice instead of the double dual, and applying~\cite[Theorem~4.4]{Langer}
again. 

As for the upper bound on $\ch_3(F)$, note that 
$E \in \bB_{B, H}$ has a subobject $E' \in \bB_{B, H}$ 
which fits into an exact sequence in $\bB_{B, H}$ 
\begin{align*}
0 \to \hH^{-1}(E)[1] \to E' \to F \to 0. 
\end{align*}
Since $\Hom(\Coh_{\le 1}(X), E)=0$, 
we have $\Hom(\Coh_{\le 1}(X), E')=0$. 
Then the upper bound on $\ch_3(F)$
is obtained by Lemma~\ref{lem:ch3} below. 
\end{proof}

\begin{rmk}
Although we assume that $X$ is a smooth projective 
Calabi-Yau 3-fold in this section, 
the result of Proposition~\ref{prop:moduli}
holds for any smooth projective 3-fold. 
\end{rmk}

We have used the following lemma:
\begin{lem}\label{lem:ch3}
For a fixed reflexive sheaf
 $U$ on $X$, consider the set of 
sheaves $F \in \Coh_{\le 1}(X)$
with fixed $\ch_2(F)$  
which fit into a distinguished triangle
\begin{align}\label{UVF}
U[1] \to V \to F
\end{align}
such that $\Hom(\Coh_{\le 1}(X), V)=0$. 
Then $\ch_3(F)$ is bounded above. 
\end{lem}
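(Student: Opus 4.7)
The plan is to extract an injectivity constraint on $\Hom$-spaces from the vanishing hypothesis and then bound $\ch_3(F) = \chi(F)$ separately on the zero- and one-dimensional strata of $F$. Applying $\Hom(G,-)$ to the triangle $U[1] \to V \to F \to U[2]$ for arbitrary $G \in \Coh_{\le 1}(X)$, the resulting long exact sequence combined with $\Hom(G,V) = 0$ produces the key injection
\begin{align*}
\Hom(G,F) \hookrightarrow \Ext^2(G,U).
\end{align*}
I would then write $F$ as an extension $0 \to F_0 \to F \to F_1 \to 0$ with $F_0$ the maximal zero-dimensional subsheaf and $F_1$ pure one-dimensional; since $\ch_3(F) = \chi(F_0) + \chi(F_1)$, it suffices to bound each summand above.

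For the zero-dimensional part, reflexivity of $U$ on the smooth threefold $X$ forces the non-locally-free locus $T$ of $U$ to be a finite set of points. At any $x \notin T$, local freeness of $U$ at $x$ gives $\Ext^2(\oO_x,U) = 0$, so the injection $\Hom(\oO_x,F) \hookrightarrow \Ext^2(\oO_x,U) = 0$ shows that $F$ has no embedded zero-dimensional component at $x$. Hence $F_0$ is supported on the finite set $T$, and $\length(F_0)$ should be bounded by an iterative argument using the injection along a socle filtration of $F_0$, ultimately by a constant depending only on $U$.

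For the one-dimensional part, the scheme-theoretic support $C = \Supp(F_1)$ is a one-dimensional subscheme with fixed homology class $[C] = \ch_2(F)$, so $C$ ranges over a bounded family (finitely many components of the Hilbert scheme of curves, each of finite type). I would apply the injection to the test sheaf $G = \oO_C(-nH)$ for $n \gg 0$. Serre duality on the Calabi--Yau threefold rewrites the target as $\Ext^1(U,\oO_C(-nH))^{\vee}$, and both sides are linear polynomials in $n$ for $n$ large, by Riemann--Roch on $C$. Comparing leading terms forces the generic rank of $F_1$ along $C$ to be at most $\rk U$, and comparing subleading terms then bounds $\chi(F_1)$ above by a quantity that is uniform as $C$ varies in the bounded family of supports.

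The main obstacle will be the zero-dimensional step: the most direct attempt of taking $G = F_0$ and using $\End(F_0) \hookrightarrow \Hom(F_0,F) \hookrightarrow \Ext^2(F_0,U)$ yields only the tautology $\length(F_0) \le C_U \cdot \length(F_0)$, since both $\dim \End(F_0)$ and $\dim \Ext^2(F_0,U)$ scale linearly in $\length(F_0)$ with comparable leading coefficients. A more refined argument, probing $F_0$ with the individual skyscraper test sheaves $\oO_x$ at $x \in T$ and inductively controlling the socle of $F_0$ layer by layer while tracking its interaction with the inclusion into $F$, should be needed to convert the family of injections into a genuine uniform length bound.
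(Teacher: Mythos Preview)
Your approach differs substantially from the paper's. The paper applies the derived dual $\mathbb{D}(-) = \dR\hH om_X(-, \oO_X)$ to the whole triangle, obtaining a long exact sequence involving $U^\vee$, the pure one-dimensional sheaf $F^\vee := \eE xt^2_X(F, \oO_X)$, the fixed zero-dimensional sheaf $Q := \eE xt^1_X(U, \oO_X)$, and $Q' := \eE xt^3_X(F, \oO_X)$. The vanishing $\Hom(\Coh_{\le 1}(X), V) = 0$ then translates under duality into two surjectivity statements: $Q \twoheadrightarrow Q'$, which bounds the zero-dimensional contribution in one stroke since $Q$ depends only on $U$, and $U^\vee \to F^\vee$ surjective in dimension one, which reduces the one-dimensional bound to a Quot-scheme argument for the fixed sheaf $U^\vee$. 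Your injection $\Hom(G, F) \hookrightarrow \Ext^2(G, U)$ is the pre-dual shadow of this picture, but probing with individual test objects $G$ loses the structural surjectivity that does the real work.

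There are two genuine gaps. The first is the one you flag: iterating along a socle filtration of $F_0$ does not work, because after passing to $F/\mathrm{socle}(F_0)$ there is no new object $V'$ with $\Hom(\Coh_{\le 1}(X), V') = 0$ to which your injection applies; the hypothesis lives on $V$, not on $F$, and does not descend to quotients of $F$. The paper's surjection $Q \twoheadrightarrow Q'$ (note that $\length(Q') = \length(F_0)$) sidesteps this entirely. The second gap you do not flag: your claim that the supports $C = \Supp(F_1)$ form a bounded family is false. Fixing $\ch_2(F)$ controls only the degree $[C]\cdot H$ (and $[C]$ itself is not even $\ch_2(F)$ when $F_1$ has generic rank $>1$ on a component); the arithmetic genus $\chi(\oO_C)$ is unconstrained, so these curves sweep out infinitely many components of the Hilbert scheme. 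Consequently the threshold ``$n \gg 0$'' in your Riemann--Roch comparison cannot be chosen uniformly in $C$, and the resulting bound on $\chi(F_1)$ is not uniform either. The paper avoids this because it bounds one-dimensional quotients of the \emph{single fixed} sheaf $U^\vee$ with fixed $\ch_2$, which is a bounded Quot problem.
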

\begin{proof}
Since $U$ is a reflexive sheaf, 
we have $\eE xt_X^0(U, \oO_X)=U^{\vee}$, 
$Q \cneq \eE xt_X^1(U, \oO_X)$ is a zero dimensional sheaf
and $\eE xt_X^i(U, \oO_X)=0$ for $i\neq 0, 1$. 
Also $F^{\vee} \cneq \eE xt_X^2(F, \oO_X)$
is a pure one dimensional sheaf, 
$Q' \cneq \eE xt_X^3(F, \oO_X)$ is a zero dimensional 
sheaf and $\eE xt_X^i(F, \oO_X)=0$ for 
$i\neq 2, 3$. 
Applying $\mathbb{D}(-) \cneq \dR \hH om_X(-, \oO_X)$
to the triangle (\ref{UVF}), we obtain
$\hH^i(\mathbb{D}(V))=0$ for 
$i\neq 1, 2, 3$ and
the long exact sequence of sheaves, 
\begin{align*}
0 \to \hH^1(\mathbb{D}(V)) \to 
U^{\vee} &\to F^{\vee} \to \hH^2(\mathbb{D}(V)) \\
&\to Q \to Q' \to \hH^3(\mathbb{D}(V)) \to 0.
\end{align*}
Also the condition $\Hom(\Coh_{\le 1}(X), V)=0$ 
implies that
\begin{align}\label{cond:dual}
\Hom(\mathbb{D}(V), A[-2])=0, \ 
\Hom(\mathbb{D}(V), \oO_x[-3])=0
\end{align}
for any pure one dimensional sheaf $A$
and a closed point $x\in X$. 
Therefore $\hH^3(\mathbb{D}(V))=0$, 
$Q \to Q'$ is surjective
and $U^{\vee} \to F^{\vee}$ is surjective 
in dimension one. 
This implies that $\ch_3(Q')$ is bounded, 
and
an argument similar to~\cite[Lemma~3.10]{Tolim}
shows that $\ch_3(F^{\vee})$ is bounded below. 
Hence $\ch_3(\mathbb{D}(F))$ is bounded below, 
and $\ch_3(F)$ is bounded above. 
\end{proof}
By Remark~\ref{rmk:check},
Proposition~\ref{prop:larget}, Corollary~\ref{cor:mustable}
and Lemma~\ref{lem:tsmall}, the following lemma
obviously follows: 
\begin{lem}\label{lem:obvious}
Suppose that Conjecture~\ref{conj:BMT} is true, 
and take $\xi$, $\mu$ and $m(\xi, \mu)$
as in Corollary~\ref{cor:mustable}. 
If $m>m(\xi, \mu)$ and $\eta<\mu/m^{\xi}$,
we have the following: 

(i) For $t>\sqrt{3}m/2$, we have 
\begin{align*}
\mM_{t}^{ss}(0, mH, -\beta, -n)
= \mM_{H}^{s}(0, mH, -\beta, -n). 
\end{align*}

(ii) For $t< \sqrt{3}m/2 \cdot \sqrt{1-\eta}$, 
we have 
\begin{align*}
\mM_t^{ss}(0, mH, -\beta, -n) = \emptyset.  
\end{align*}
\end{lem}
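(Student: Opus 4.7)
The plan is to assemble this from the four cited ingredients; essentially no new argument is needed, but one must keep track of which implication goes which way and verify that the stated numerical hypotheses are strong enough.

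For part (i), I would first invoke Proposition~\ref{prop:larget}: since $t > \sqrt{3}m/2$, the condition of $\nu_t$-semistability on an object $E \in \bB_{B,H}$ with $\ch(E) = v$ is exactly equivalent to $E$ being a $\mu_{H,2}$-semistable sheaf in $\Coh_{\le 2}(X)$ with $\ch(E) = v$. This already gives $\mM_t^{ss}(0,mH,-\beta,-n)$ as the moduli stack of $\mu_{H,2}$-semistable sheaves of the prescribed Chern character. Next I would apply Corollary~\ref{cor:mustable}: if $\mM_t^{ss}(v)$ is nonempty, then Corollary~\ref{cor:conji} forces $\eta \ge 0$, so the hypothesis $\eta < \mu/m^\xi$ from the present lemma combined with $m > m(\xi,\mu)$ puts us in the range covered by Corollary~\ref{cor:mustable}, and every $\mu_{H,2}$-semistable sheaf of class $v$ is in fact $\mu_{H,2}$-stable. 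Finally, Remark~\ref{rmk:check} gives the chain slope-stable $\Rightarrow$ stable $\Rightarrow$ semistable $\Rightarrow$ slope-semistable; in our situation the outermost and innermost terms coincide (both equal $\mu_{H,2}$-stability), so all four notions agree on objects of class $v$, and in particular $\mu_{H,2}$-stability agrees with $H$-stability in the sense of Gieseker. This yields $\mM_t^{ss}(0,mH,-\beta,-n) = \mM_H^s(0,mH,-\beta,-n)$.

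For part (ii), the plan is to reduce to Lemma~\ref{lem:tsmall}. That lemma requires $\eta < 1$; under the hypotheses $m > m(\xi,\mu)$ and $\eta < \mu/m^\xi$, with $(\xi,\mu)$ as in Corollary~\ref{cor:mustable} (so in particular $\xi \ge 1$ and $\mu$ a fixed positive real), enlarging the constant $m(\xi,\mu)$ if necessary so that $m(\xi,\mu)^{\xi} > \mu$ automatically forces $\eta < \mu/m^\xi < 1$. Lemma~\ref{lem:tsmall} then applies verbatim: any $\nu_t$-semistable object $E \in \bB_{B,H}$ with $\ch(E) = v$ would violate Conjecture~\ref{conj:BMT} whenever $t < \sqrt{3}m/2 \cdot \sqrt{1-\eta}$, so $\mM_t^{ss}(0,mH,-\beta,-n) = \emptyset$ in this range of $t$.

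There is no real obstacle here; the entire content of the lemma is to package the earlier results in the shape that will be used in the wall-crossing argument. The only bookkeeping point worth being careful about is that the constant $m(\xi,\mu)$ appearing in the statement must be taken to be the maximum of the constant from Corollary~\ref{cor:mustable} and the threshold needed to ensure $\mu/m^\xi < 1$; both adjustments depend only on $\xi$ and $\mu$, so this is harmless.
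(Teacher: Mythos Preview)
Your proposal is correct and matches the paper's approach exactly: the paper's entire proof is the single sentence ``By Remark~\ref{rmk:check}, Proposition~\ref{prop:larget}, Corollary~\ref{cor:mustable} and Lemma~\ref{lem:tsmall}, the following lemma obviously follows,'' and you have simply spelled out how these four ingredients fit together. Your observation about possibly enlarging $m(\xi,\mu)$ to ensure $\mu/m^{\xi}<1$ is also handled the same way in the paper (the adjustment is made explicitly a bit later, just before the formula~(\ref{form:final})).
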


\subsection{Wall-crossing in the Hall algebra}
As we discussed in Subsection~\ref{subsec:wall}, there 
is the set of walls $\sS \subset \mathbb{R}_{>0}$
such that the moduli stack (\ref{M:abstract})
may jump if we cross the wall. 
We investigate the behavior 
of the moduli stack (\ref{M:abstract})
under the change of $t$ in terms of 
Hall algebra. 

Recall that the Hall algebra
$H(\bB_{B, H})$ of $\bB_{B, H}$ 
is spanned over $\mathbb{Q}$ by the isomorphism classes of 
symbols, 
\begin{align*}
[\rho \colon \xX \to \mM], 
\end{align*}
where $\xX$ is an Artin stack of finite type over $\mathbb{C}$
with affine geometric stabilizers and 
$\rho$ is a 1-morphism. 
The relations are generated by
\begin{align*}
[\rho \colon \xX \to \mM] \sim
[\rho|_{\yY} \colon \yY \to \mM] 
+ [\rho|_{\uU} \colon \uU \to \mM] 
\end{align*} 
where $\yY \subset \xX$ is a closed substack 
and $\uU \cneq \xX \setminus \yY$.
 
There is an associative $\ast$-product
on the $\mathbb{Q}$-vector space $H(\bB_{B, H})$
based on Ringel-Hall algebras. 
Let $\eE x$  be the stack of 
short exact sequences in $\bB_{B, H}$, 
\begin{align*}
0 \to E_1 \to E_2 \to E_3 \to 0.
\end{align*} 
There are 1-morphisms, 
\begin{align*}
p_i \colon \eE x  \to \mM, \quad 
i=1, 2, 3
\end{align*}
sending $E_{\bullet}$ 
to $E_i$. 
The $\ast$-product
is defined by  
\begin{align*}
\ast \cneq p_{2\ast}(p_1, p_3)^{\ast}
\colon H(\bB_{B, H})^{\otimes 2} \to H(\bB_{B, H}). 
\end{align*}
For the detail, see~\cite{Joy2}.

For each $t\in \mathbb{R}_{>0}$
and $w \in H^{\ast}(X, \mathbb{Q})$, 
the stack (\ref{M:abstract})
determines an element by Proposition~\ref{prop:moduli}, 
\begin{align*}
\delta_{t}(w) \cneq 
[\mM_{t}^{ss}(w) \subset
\mM] \in H(\bB_{B, H}). 
\end{align*}
We take $v\in H^{\ast}(X, \mathbb{Q})$
as in (\ref{v:class}). 
Then for each $t_0\in \mathbb{R}_{>0}$, 
the existence of Harder-Narasimhan 
filtrations with respect to the tilt stability 
yields, 
\begin{align}\label{delta:sum}
\delta_{t_0}(v)=\sum_{l\ge 1}
\sum_{\begin{subarray}{c}
v_1 + \cdots + v_l=v, \\
\nu_{t_0}(v_1)= \cdots =\nu_{t_0}(v_l)=0, \\
\nu_{t_{+}}(v_1)> \cdots > \nu_{t_{+}}(v_l)
\end{subarray}}
\delta_{t_{+}}(v_1)\ast \cdots \ast \delta_{t_{+}}(v_l). 
\end{align}
Here $t_{+}=t_0+\varepsilon$
for $0<\varepsilon \ll 1$. 
The above formula is a starting point 
to deduce the wall-crossing formula in~\cite{Joy4}, \cite{JS} and
\cite{K-S}. 

Below we investigate the RHS of (\ref{delta:sum})
assuming that Conjecture~\ref{conj:BMT} is true, 
and $\eta$ is sufficiently small, 
where $\eta$ is given by (\ref{def:eta}). 
We use the following condition: 
\begin{align}\label{cond:eta}
0 \le \eta < \mathrm{min} \left\{ \frac{3}{2m}, \frac{1}{2} \right\}.
\end{align}
Note that, by Corollary~\ref{cor:conji}, 
 the left inequality 
automatically follows if we assume Conjecture~\ref{conj:BMT}.
We have the following proposition: 
\begin{prop}\label{RHS:delta}
Suppose that Conjecture~\ref{conj:BMT}
is true. 
Assume that $\eta$ satisfies (\ref{cond:eta}). 
 We have the following:

(i) A non-zero term of the RHS of (\ref{delta:sum})
satisfies $l=1$ or $l=2$. 

(ii) For a non-zero term in the RHS of (\ref{delta:sum})
with $l=2$, 
the numerical classes 
$v_1$, $v_2$ satisfy $e^{-B}v_i=
(r_i, D_i, \gamma_i, s_i) \in 
H^{\ast}(X, \mathbb{Q})$
with $r_1=-1$ and $r_2=1$.  

(iii) In the notation of (ii), 
we write $D_i=d_i H$
for $d_i \in \mathbb{Q}$. Then
we have 
\begin{align}\label{eval:d}
&\left\lvert d_i -\frac{m}{2} \right\rvert < \frac{1}{3}m\eta \\
\label{eval:g}
\frac{H^3}{8}m^2 \left(1-\frac{2}{3}\eta \right)^2
&< (-1)^i \gamma_i H 
< \frac{H^3}{8}m^2 \left(1+ \frac{2}{3}\eta \right)^2. 
\end{align}
\end{prop}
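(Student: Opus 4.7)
My plan is to combine the classical Bogomolov-Gieseker inequality (Theorem~\ref{thm:class}) with the conjectural strengthening (Conjecture~\ref{conj:BMT}), applied to each Harder-Narasimhan factor on the right-hand side of (\ref{delta:sum}). I first set notation: for each factor $E_i$, write $e^{-B}v_i = (r_i, d_i H, \gamma_i, s_i)$. Since $\Pic(X) = \mathbb{Z}H$ and $E_i \in \bB_{B,H}$, the rank $r_i$ is an integer and $d_i \in \mathbb{Q}_{>0}$ (the condition $\nu_{t_0}(v_i) = 0 \ne \infty$ forces strict positivity). The wall condition $\Imm Z_{B, t_0 H}(E_i) = 0$ gives the relation
\begin{align*}
H\gamma_i = \frac{t_0^2 H^3}{6}\, r_i,
\end{align*}
and additivity from $\sum v_i = v$ yields $\sum r_i = 0$, $\sum d_i = m$, $\sum \gamma_i = 0$, and $\sum s_i = \frac{H^3}{24} m^3 (1-\eta)$. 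A direct differentiation using the wall relation produces $(d\nu_t/dt)|_{t_0}(E_i) = -r_i/(3 d_i)$, so the strict HN ordering at $t_{+}$ translates into $r_1/d_1 < \cdots < r_l/d_l$; in particular at most one factor has $r_i = 0$.

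Next I feed each $E_i$ into both Bogomolov-Gieseker inequalities. Theorem~\ref{thm:class} in the form (\ref{BG:thm2}), combined with the wall relation, gives $d_i \ge |r_i|\, t_0 /\sqrt{3}$. Summing the conjectural bound $s_i \le \frac{t_0^2 H^3 d_i}{18}$ from (\ref{conj:ineq:ch3}) over all factors produces $\frac{H^3}{24} m^3 (1-\eta) \le \frac{t_0^2 H^3 m}{18}$, i.e.\ $t_0 \ge \frac{\sqrt{3}\, m\sqrt{1-\eta}}{2}$. Multiplying the two bounds gives $d_i \ge |r_i|\, m\sqrt{1-\eta}/2$ whenever $r_i \ne 0$; with $\eta < 1/2$, so that $\sqrt{1-\eta} > 1/\sqrt{2}$, the identity $\sum d_i = m$ forces $\sum_{r_i \ne 0} |r_i| \le 2\sqrt{2}$. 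Integrality of the $r_i$ then restricts at most two of them to be non-zero, and when two are present $\sum r_i = 0$ pins them down as $-1$ and $+1$.

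It remains to exclude $l = 3$ with $r_2 = 0$ (hence $r_1 = -1$, $r_3 = 1$). Here $E_2$ has $\ch_0 = 0$ and $\ch_1 \in \mathbb{Z}H$, so $d_2$ is a non-negative integer; on the other hand $d_1 + d_3 \ge m\sqrt{1-\eta}$ together with $1 - \sqrt{1-\eta} \le \eta$ gives $d_2 \le m\eta$, and the hypothesis $\eta < 3/(2m)$ forces $d_2 \in \{0,1\}$. The value $d_2 = 0$ would make $\omega^2 \ch^B_1(E_2) = 0$ and so $\nu_{t_0}(E_2) = \infty$, contradicting $\nu_{t_0}(E_2) = 0$. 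The value $d_2 = 1$ yields $d_1 + d_3 = m-1 \ge m\sqrt{1-\eta}$, hence $\eta \ge (2m-1)/m^2$, incompatible with $\eta < 3/(2m)$ once $m \ge 2$. This establishes (i) and (ii), and (iii) follows by elementary calculation: $d_1 + d_2 = m$ together with $d_i \ge m\sqrt{1-\eta}/2$ gives $|d_i - m/2| \le \frac{m\eta}{2(1+\sqrt{1-\eta})} < m\eta/3$ for $\eta < 3/4$, while the two-sided bound on $(-1)^i \gamma_i H = t_0^2 H^3/6$ comes from the sandwich $\frac{\sqrt{3}m\sqrt{1-\eta}}{2} \le t_0 \le \frac{\sqrt{3}m}{2}$ (Lemma~\ref{wall:S} supplies the upper bound) together with the elementary estimates $(1 - 2\eta/3)^2 \le 1 - \eta$ and $1 \le (1 + 2\eta/3)^2$.

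The main obstacle is the lower bound on $t_0$: only the precise coefficient $1/18$ in the conjectural inequality (\ref{conj:ineq:ch3}) makes $\sum_{r_i \ne 0} |r_i| \le 2\sqrt{2}$ work, whereas the weaker bound (\ref{weak}) with coefficient $1/2$ would give merely $t_0 \ge m\sqrt{(1-\eta)/2}$, far too weak to control the combinatorics of the HN factors. The exclusion of $l = 3$ hinges equally sensitively on the integrality of $d_2$ interacting with the tight hypothesis $\eta < 3/(2m)$; both the optimal constant in Conjecture~\ref{conj:BMT} and this specific threshold on $\eta$ are thus essential ingredients in the argument.
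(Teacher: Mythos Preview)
Your argument is correct and rests on the same two inputs as the paper: the classical Bogomolov--Gieseker inequality of Theorem~\ref{thm:class} and the conjectural bound of Conjecture~\ref{conj:BMT}, combined with the wall relation $H\gamma_i = \tfrac{t_0^2 H^3}{6}r_i$ and the lower bound $t_0 \ge \tfrac{\sqrt{3}}{2}m\sqrt{1-\eta}$ (which is exactly Lemma~\ref{lem:tsmall}). The organization differs in two places. First, the paper does not apply Theorem~\ref{thm:class} to each Harder--Narasimhan factor separately; instead it aggregates the factors into two blocks $S = E_i$ and $T = E/E_i$ at the index where the rank changes sign, observes that one of them satisfies $D_\ast^2 H \le m^2 H^3/4$, and from this single inequality concludes $|r_S| = |r_T| = 1$. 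Your direct summation $\sum d_i \ge \sum |r_i|\,m\sqrt{1-\eta}/2$ reaches the same endpoint more uniformly and avoids the aggregation step. Second, for the exclusion of $l=3$ the paper obtains the sharper bound $|d_2| < \tfrac{2}{3}m\eta < 1$ in one stroke, whereas you use the coarser $d_2 \le m\eta < 3/2$ and then eliminate $d_2 = 1$ by a separate estimate; both work, though your sharper bound $d_2 \le m\eta/(1+\sqrt{1-\eta})$ would in fact have given $d_2 < 1$ directly and spared the case split. Finally, for the upper bound in (\ref{eval:g}) you invoke $t_0 \le \sqrt{3}m/2$ from Lemma~\ref{wall:S} and read off $(-1)^i\gamma_i H = t_0^2 H^3/6 \le m^2 H^3/8$, while the paper instead uses $\gamma_2 H \le \tfrac{1}{2}d_2^2 H^3$ from Theorem~\ref{thm:class} together with (\ref{eval:d}); your route is slightly cleaner here. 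One small point you leave implicit: to apply Conjecture~\ref{conj:BMT} at $t_0$ to each factor, you need the factors to be $\nu_{t_0}$-semistable (not just $\nu_{t_+}$-semistable); this follows since they are subquotients of the $\nu_{t_0}$-semistable object $E$ with the same $\nu_{t_0}$-slope, but it is worth saying.
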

\begin{proof}
It is enough to consider the terms with $l\ge 2$. 
Let $E\in \bB_{B, H}$ be an $\nu_{t_0}$-semistable 
object with $\ch(E)=v$
 which is not $\nu_{t_{+}}$-semistable. 
Then there is a filtration in $\bB_{B, H}$
\begin{align*}
E_1 \subset E_2 \subset \cdots \subset E_l =E
\end{align*}
such that each subquotient
$F_i \cneq E_i/E_{i-1}$ is $\nu_{t_{+}}$-semistable 
with $\nu_{t_0}(F_i)=0$ and 
\begin{align}\label{ineq:F}
\nu_{t_{+}}(F_1)> \nu_{t_{+}}(F_2)> \cdots > \nu_{t_{+}}(F_l). 
\end{align}
Let us write $\ch^{B}(F_i)=(r_i, D_i, \gamma_i, s_i) 
\in H^{\ast}(X, \mathbb{Q})$. 
Then the condition 
$\nu_{t_0}(F_i)=0$ together with 
the inequalities (\ref{ineq:F})
imply
\begin{align*}
\frac{r_1}{D_1 H^2} < \frac{r_2}{D_2 H^2}< \cdots 
< \frac{r_l}{D_l H^2}. 
\end{align*}
Since $r_1 + \cdots + r_l=0$, there is 
$1\le i \le l-1$ such that 
$r_j<0$ for $j\le i$, $r_{i+1} \ge 0$ and $r_j >0$ for 
$j\ge i+2$. 
We set $S \cneq E_{i}$, $T \cneq E/E_{i}$, 
and write 
$\ch^{B}(S)=(r_S, D_S, \gamma_S, s_S)$, 
$\ch^{B}(T)=(r_T, D_T, \gamma_T, s_T)$. 
Since $S$, $T$
are $\nu_{t_0}$-semistable, 
for $\ast \in \{S, T\}$, we have 
the inequality by Theorem~\ref{thm:class}, 
\begin{align*}
D_{\ast}^2 H \ge 2 r_{\ast} \gamma_{\ast} H. 
\end{align*}
Also from $\nu_{t_0}(\ast)=0$, we have 
\begin{align*}
-\frac{H^3}{6}r_{\ast}t_0^2 + \gamma_{\ast}H=0. 
\end{align*}
Since we assume Conjecture~\ref{conj:BMT}, Lemma~\ref{lem:tsmall}
implies
\begin{align}\label{t2}
t_0^2 \ge \frac{3}{4}m^2(1-\eta). 
\end{align}
Since $D_S +D_T =mH$, 
either $\ast=S$ or $T$ satisfies
$m^2 H^3/4 \ge D_{\ast}^2 H$. 
Hence for such $\ast$, we have
\begin{align}\label{ineq:ast}
\frac{H^3}{4}m^2 \ge 
D_{\ast}^2 H \ge
2r_{\ast} \cdot \frac{H^3}{6}r_{\ast}t_0^2
\ge \frac{H^3}{4}r_{\ast}^2 m^2(1-\eta). 
\end{align}
Therefore if $\lvert r_{\ast} \rvert \ge 2$,
the above inequalities imply
$\eta \ge 3/4$, which contradicts to
the assumption (\ref{cond:eta}).  
Since $r_S +r_{T}=0$, we conclude that
$r_{S}=-1$ and $r_{T}=1$. 
It follows that there are two possibilities:
$l=2$, $r_1=-1$, $r_2=1$
and $l=3$, $r_1=-1$, $r_2=0$, $r_3=1$. 

Let us exclude the latter case. Suppose that $l=3$, 
and we write $D_i=d_i H$ for $1\le i\le 3$
with $d_i \in \mathbb{Q}$. 
Note that $d_1 + d_2 + d_3=m$, 
$d_i>0$
and $d_2 \in \mathbb{Z}_{\ge 1}$
 since $r_2=0$. 
Hence there is $i \in \{1, 3\}$ such that 
$d_i \le m/2$. 
For such $i$,  
since $\lvert r_i \rvert =1$, 
we have the inequalities similar to 
(\ref{ineq:ast}),
\begin{align*}
\frac{H^3}{4}m^2 \ge d_i^{2} H^3 \ge \frac{H^3}{4}m^2(1-\eta), 
\end{align*}
which imply 
\begin{align*}
\frac{m}{2} \left( 1-\frac{2}{3}\eta \right)
< \frac{m}{2} \sqrt{1-\eta}
\le d_i \le \frac{m}{2}. 
\end{align*} 
Here the left inequality follows from 
(\ref{cond:eta}). 
Therefore we have 
\begin{align}\label{ineq:ast13}
\left \lvert d_i -\frac{m}{2} \right\rvert
< \frac{1}{3}m\eta, \ i=1, 3, 
\end{align}
and $\lvert d_2 \rvert <2m\eta/3$.
Combined with the assumption (\ref{cond:eta}), 
we have $\lvert d_2 \rvert <1$, 
which contradicts to $d_2 \in \mathbb{Z}_{\ge 1}$. 
Therefore the case $l=3$ is excluded,
and (i), (ii) are proved. 
When $l=2$, the argument showing
the inequality (\ref{ineq:ast13}) also shows (\ref{eval:d}). 

Finally we prove (\ref{eval:g}). 
For simplicity, we only show the case $i=2$. 
By $\nu_{t_0}(v_2)=0$, (\ref{cond:eta}) and (\ref{t2}), 
we have  
\begin{align*}
\gamma_2 H = \frac{H^3}{6}t_0^2 \ge \frac{H^3}{8}m^2(1-\eta)
> \frac{H^3}{8}m^2 \left( 1-\frac{2}{3}\eta \right)^2. 
\end{align*}
On the other hand, by (\ref{eval:d})
and using Theorem~\ref{thm:class}, 
we have 
\begin{align*}
\gamma_2 H \le \frac{1}{2}d_2^2 H^3 <
\frac{H^3}{8}m^2 \left( 1+\frac{2}{3}\eta \right)^2. 
\end{align*}
Hence the inequalities (\ref{eval:g}) hold. 
\end{proof}

\subsection{Rank $\pm 1$ tilt semistable objects}
As we observed in Proposition~\ref{RHS:delta}, 
only rank $\pm 1$ tilt semistable
 objects are involved in 
non-trivial wall-crossing factors of 
(\ref{delta:sum}). In this subsection, we see that
these objects are isomorphic to
ideal sheaves of curves and derived dual 
of stable pairs, up to tensoring 
line bundles and shift. 

We first characterize the derived dual 
of stable pairs. 
Below for a stable pair $(\oO_X \to F)$, 
we regard it as a two term complex
with $\oO_X$ located in degree zero.
Also the derived dualizing functor $\mathbb{D}$
is defined by 
\begin{align*}
\mathbb{D}(-) \cneq \dR \hH om_{X}(-, \oO_X). 
\end{align*} 
\begin{lem}\label{PT:dual}
For an object $E \in D^b \Coh(X)$, 
there are a stable pair $(\oO_X \to F)$, 
$\lL \in \Pic(X)$ and an isomorphism 
\begin{align}\label{E:pair}
E \cong \lL \otimes \mathbb{D}(\oO_X \to F)[1]
\end{align}
if and only if $\hH^{-1}(E) \in \Pic(X)$, 
$\hH^{0}(E) \in \Coh_{\le 1}(X)$, 
$\hH^i(E)=0$ for $i\neq -1, 0$ and 
$\Hom(\Coh_{\le 1}(X), E)=0$. 
\end{lem}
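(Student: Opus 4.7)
The plan is to use the duality involution $\mathbb{D}^2 = \mathrm{id}$ on $D^b \Coh(X)$ (valid since $X$ is smooth) to pass between the object $E$ and a candidate two-term complex $I^\bullet = (\oO_X \stackrel{s}{\to} F)$. Twisting by the appropriate power of $\lL$ reduces both directions to the case $\lL = \oO_X$, so I will assume this throughout.

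For the ``only if'' direction, I would start with the tautological triangle $F[-1] \to I^\bullet \to \oO_X$ associated with $I^\bullet = (\oO_X \stackrel{s}{\to} F)$ (with $\oO_X$ placed in degree $0$). Purity of $F$ on the smooth threefold $X$ forces Cohen--Macaulayness, so $\mathbb{D}(F) = F^\vee[-2]$ with $F^\vee := \eE xt^2(F, \oO_X)$ again pure one-dimensional. Dualizing and shifting by $[1]$ produces
\begin{align*}
F^\vee[-1] \to \oO_X[1] \to \mathbb{D}(I^\bullet)[1] \to F^\vee,
\end{align*}
from whose long exact sequence I can read off $\hH^{-1} = \oO_X$, $\hH^0 = F^\vee \in \Coh_{\le 1}(X)$, and vanishing of higher cohomology. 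The required Hom-vanishing will follow from the duality translation established in the last paragraph.

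For the ``if'' direction, the canonical truncation triangle $\oO_X[1] \to E \to \hH^0(E)$ does most of the structural work. A first observation is that $\hH^0(E)$ is pure one-dimensional: any nonzero $0$-dimensional subsheaf $T \subset \hH^0(E)$ would give $\Hom(T, E) \cong \Hom(T, \hH^0(E)) \neq 0$ because $\Ext^i(T, \oO_X) = 0$ for $i < 3$ on the threefold $X$, contradicting the hypothesis. Setting $F := \eE xt^2(\hH^0(E), \oO_X)$, pure one-dimensional by Cohen--Macaulay biduality, and applying $\mathbb{D}$ followed by $[1]$ to the truncation triangle yields
\begin{align*}
F[-1] \to \mathbb{D}(E)[1] \to \oO_X \stackrel{s}{\to} F,
\end{align*}
which identifies $\mathbb{D}(E)[1]$ with the two-term complex $(\oO_X \stackrel{s}{\to} F)$, the section $s$ being determined by the extension class of $E$ in $\Ext^2(\hH^0(E), \oO_X)$.

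The conceptual heart, and the step I expect to be the main obstacle, is to translate $\Hom(\Coh_{\le 1}(X), E) = 0$ into the surjectivity of $s$ in dimension one, i.e.\ $\mathrm{coker}(s) \in \Coh_{\le 0}(X)$. The plan is to apply $\Hom(-, G^\vee[-1])$ to the triangle $F[-1] \to I^\bullet \to \oO_X$ and identify, for $G \in \Coh_{\le 1}(X)$ pure one-dimensional,
\begin{align*}
\Hom(G, E) \cong \Hom(I^\bullet, G^\vee[-1]) = \ker \bigl( s^{\ast} : \Hom(F, G^\vee) \to \Gamma(G^\vee) \bigr),
\end{align*}
where the first isomorphism uses $\mathbb{D}^2 = \mathrm{id}$ together with the pure one-dimensionality of $G^\vee$, and the vanishing of $\Hom(\oO_X, G^\vee[-1])$. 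Under this identification, the Hom-vanishing becomes the statement that every morphism $F \to G^\vee$ vanishing on $\im(s)$ is itself zero, equivalently that $\mathrm{coker}(s)$ admits no pure one-dimensional quotient, i.e.\ $\mathrm{coker}(s) \in \Coh_{\le 0}(X)$. The case of $0$-dimensional $G$ is automatic from $\Ext^i(\oO_x, \oO_X) = 0$ for $i < 3$. Making the sign and shift bookkeeping precise in this duality translation, and checking that the kernel identification above really runs in both directions, will be the main technical work.
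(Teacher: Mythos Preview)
Your argument is correct and somewhat different from the paper's. Both directions begin the same way, by dualizing the canonical triangle, but the ``if'' direction diverges. The paper does not first establish purity of $\hH^0(E)$; instead it dualizes the truncation triangle, invokes the computation of Lemma~\ref{lem:ch3} (which allows a nonzero $\eE xt^3$ term) to conclude that $E' = \mathbb{D}(E\otimes\lL^{-1})[-1]$ has $\hH^0(E')\cong I_C$ and $\hH^1(E')$ zero-dimensional, and then cites \cite[Lemma~3.11]{Tcurve1} for the identification of such objects with stable pairs. Your route---proving $\hH^0(E)$ is pure one-dimensional from the Hom-vanishing and $\Ext^{<3}(\oO_x,\oO_X)=0$, then using Cohen--Macaulay biduality to land directly on the two-term complex $(\oO_X\stackrel{s}{\to}F)$---is more self-contained and avoids the external citation. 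Likewise, for the Hom-vanishing $\Leftrightarrow$ $\mathrm{coker}(s)\in\Coh_{\le 0}$ equivalence, the paper appeals to condition~(\ref{cond:dual}) established in Lemma~\ref{lem:ch3}, whereas you compute the duality translation $\Hom(G,E)\cong\ker\bigl(s^\ast\colon\Hom(F,G^\vee)\to\Gamma(G^\vee)\bigr)$ directly. The paper's version is shorter given its existing infrastructure; yours is cleaner as a stand-alone proof, and the purity step eliminates the need to track the $\eE xt^3$ contribution.
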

\begin{proof}
First we check the `only if' part. Suppose
that $E$ is written as (\ref{E:pair}). 
Applying $\otimes \lL \circ [1] \circ\mathbb{D}$
to the distinguished triangle
\begin{align*}
F[-1] \to (\oO_X \to F) \to \oO_X
\end{align*}
we obtain the distinguished triangle, 
\begin{align*}
\lL[1] \to E \to \eE xt^2_X(F, \oO_X) \otimes \lL.
\end{align*}
The conditions $\hH^{-1}(E) \in \Pic(X)$, 
$\hH^0(E) \in \Coh_{\le 1}(X)$ and $\hH^i(E)=0$
for $i\neq -1, 0$
follow from the above triangle. The
rest condition $\Hom(\Coh_{\le 1}(X), E)=0$
also follows using (\ref{cond:dual}), replacing $V$ by $E$. 

Next we check the `if' part. 
We set $\lL =\hH^{-1}(E) \in \Pic(X)$,
and $E'=\mathbb{D}(E \otimes \lL^{-1}[-1])$. 
We apply the functor $\mathbb{D}
\circ [-1]\circ \otimes \lL^{-1}$ to the 
distinguished triangle, 
\begin{align*}
\hH^{-1}(E)[1] \to E \to \hH^0(E)
\end{align*}
and apply the same argument in the proof of Lemma~\ref{lem:ch3}.
Then we see that $\hH^1(E')$ is zero dimensional
and $\hH^0(E') \cong I_C$ for some 
subscheme $C \subset X$ with $\dim C\le 1$. 
By~\cite[Lemma~3.11]{Tcurve1}, 
the object $E'$ is isomorphic to a two 
term complex determined by a stable pair. 
\end{proof}
Next we investigate the 
set of $\nu_t$-semistable 
objects for $t\gg 0$ with 
rank $\pm 1$. 
\begin{lem}\label{lem:rankone}
(i) For $w=(1, m'H, \gamma', s') \in H^{\ast}(X, \mathbb{Q})$, 
there is $t_{\rm{DT}}>0$
which depends on $w$
 such that 
an object $E\in \bB_{B, H}$
with $\ch(E)=w$ is 
$\nu_t$-semistable for $t>t_{\rm{DT}}$
 if and only if 
\begin{align}\label{DT:large}
E \cong \oO_X(m'H) \otimes I_{C}
\end{align}
for some $C\subset X$ with $\dim C \le 1$. 

(ii) For $w=(-1, m'H, \gamma', s') \in H^{\ast}(X, \mathbb{Q})$, 
there is $t_{\rm{PT}}>0$
which depends on $w$ such that 
an object $E\in \bB_{B, H}$
with $\ch(E)=w$ is $\nu_t$-semistable 
for $t>t_{\rm{PT}}$ if and only if 
\begin{align}\label{PT:large}
E \cong \oO_X(m'H) \otimes \mathbb{D}(\oO_X \to F)[1]
\end{align}
for some stable pair $(\oO_X \to F)$ on $X$. 
\end{lem}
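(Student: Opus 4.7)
The plan is to combine the asymptotic behavior of the slope function $\nu_t$ as $t\to\infty$ with the structural characterization in Lemma~\ref{PT:dual}. As a preliminary, for any $F\in\bB_{B,H}$ with $\omega^2\ch^B_1(F)>0$, the explicit formula
\[
\nu_t(F) = \frac{tH\ch^B_2(F) - \tfrac{t^3 H^3}{6}\ch^B_0(F)}{t^2 H^2 \ch^B_1(F)}
\]
shows that $\nu_t(F)\to -\infty$ if $\ch^B_0(F)>0$, $\nu_t(F)\to +\infty$ if $\ch^B_0(F)<0$, and $\nu_t(F)$ has a finite limit if $\ch^B_0(F)=0$; moreover, if $\omega^2\ch^B_1(F)=0$ then $\nu_t(F)=\infty$ for all $t$. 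This dichotomy drives the entire argument.

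For the only-if direction of (i), let $E\in\bB_{B,H}$ be $\nu_t$-semistable with $\ch(E)=w$ of rank $1$. From the canonical triangle $\hH^{-1}(E)[1]\to E\to\hH^0(E)$ in $\bB_{B,H}$, a nonzero $\hH^{-1}(E)$ would yield a subobject of negative rank with $\nu_t(\hH^{-1}(E)[1])>\nu_t(E)$ for large $t$, contradicting semistability; hence $\hH^{-1}(E)=0$ and $E$ is a coherent sheaf in $\tT_{B,H}$. The same asymptotic rules out nonzero torsion in $E$: a two-dimensional torsion part $T$ would give $\nu_t(T)\to 0$ while $\nu_t(E)\to -\infty$, and a subsheaf in $\Coh_{\le 1}(X)$ would give $\nu_t=\infty$. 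Hence $E$ is torsion-free of rank $1$, so $E\cong L\otimes I_C$ for a line bundle $L$ and closed subscheme $C\subset X$ with $\dim C\le 1$; matching first Chern classes with $\Pic(X)=\mathbb{Z}\cdot\oO_X(H)$ forces $L=\oO_X(m'H)$.

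For the only-if direction of (ii), let $E\in\bB_{B,H}$ be $\nu_t$-semistable with $\ch(E)=w$ of rank $-1$. The dual asymptotic ($\nu_t(E)\to +\infty$) forces any two-dimensional torsion part of the quotient $\hH^0(E)$ to destabilize, so $\hH^0(E)\in\Coh_{\le 1}(X)$ and $\hH^{-1}(E)$ is torsion-free of rank $1$, say $\hH^{-1}(E)=L\otimes I_{C'}$. The crucial step is to exclude $C'\neq\emptyset$: rotating the $[1]$-shift of the sheaf sequence $0\to L\otimes I_{C'}\to L\to L|_{C'}\to 0$ produces a triangle
\[
L|_{C'} \to L\otimes I_{C'}[1] \to L[1]
\]
all of whose terms lie in $\bB_{B,H}$ (since $\mu_{B,H}(L)=\mu_{B,H}(L\otimes I_{C'})\le 0$ puts $L,L\otimes I_{C'}\in\fF_{B,H}$, while $L|_{C'}\in\Coh_{\le 1}(X)\subset\tT_{B,H}$). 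It is therefore a short exact sequence in $\bB_{B,H}$, and composition with $\hH^{-1}(E)[1]\hookrightarrow E$ realizes $L|_{C'}$ as a subobject of $E$ in $\bB_{B,H}$ with $\nu_t(L|_{C'})=\infty>\nu_t(E)$, a contradiction. Hence $\hH^{-1}(E)\in\Pic(X)$; combined with $\Hom(\Coh_{\le 1}(X),E)=0$ (from $\nu_t(E)<\infty$), Lemma~\ref{PT:dual} gives $E\cong L\otimes \mathbb{D}(\oO_X\to F)[1]$ for some stable pair $(\oO_X\to F)$, and $L=\oO_X(m'H)$ by Chern-class comparison.

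For the converse ``if'' directions, a potential destabilizing sequence $0\to A\to E\to B\to 0$ in $\bB_{B,H}$ determines a wall $t_0>0$ at which $\nu_{t_0}(A)=\nu_{t_0}(E)$. Theorem~\ref{thm:class} applied to the $\nu_{t_0}$-semistable object $A$, together with the constraint $0\le\omega^2\ch^B_1(A)\le\omega^2\ch^B_1(E)$, confines $\ch^B(A)$ to finitely many possibilities and bounds $t_0$ by a constant $t_{\rm{DT}}(w)$ (respectively $t_{\rm{PT}}(w)$) via the slope equation, in the spirit of the proof of Lemma~\ref{wall:S}. Above this bound, the discrete wall structure from~\cite[Corollary~3.3.3]{BMT} ensures no further walls, yielding the desired semistability. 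The main obstacle is the bookkeeping in this wall-avoidance argument, particularly handling the degenerate case $\omega^2\ch^B_1(E)=0$ in which $\nu_t(E)=\infty$ and the cohomological analysis above must be supplemented using the classification of objects with infinite slope in~\cite[Remark~3.2.2]{BMT}.
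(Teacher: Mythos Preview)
Your treatment of the ``only if'' directions is essentially correct and, in fact, more self-contained than the paper's: the paper simply invokes \cite[Lemma~7.2.1]{BMT} to conclude that for large $t$ a $\nu_t$-semistable $E$ has $\hH^{-1}(E)$ torsion-free $\mu_H$-semistable, $\hH^0(E)\in\Coh_{\le 1}(X)$, and $\Hom(\Coh_{\le 1}(X),E)=0$, and then observes that $\hH^{-1}(E)$ is reflexive (hence a line bundle). Your direct asymptotic argument and the explicit subobject $L|_{C'}\hookrightarrow E$ give the same conclusion by a different route. One small sloppiness: in part (ii) you jump from ``two-dimensional torsion part of $\hH^0(E)$ destabilizes'' to ``$\hH^0(E)\in\Coh_{\le 1}(X)$'' without first ruling out positive-rank $\hH^0(E)$; you need the intermediate step that a torsion-free quotient of $\hH^0(E)$ (which lies in $\tT_{B,H}$ and has positive rank) would have $\nu_t\to-\infty$, contradicting $\nu_t(E)\to+\infty$.

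The ``if'' directions, however, have a genuine gap. Your argument presupposes that a destabilizing subobject $A$ at some $t$ yields a wall $t_0$ with $\nu_{t_0}(A)=\nu_{t_0}(E)$, but there is no reason this equation has a solution: a priori $\nu_t(A)>\nu_t(E)$ could hold for \emph{all} $t>0$, so no wall is forced. Moreover, Theorem~\ref{thm:class} alone does not confine $\ch^B(A)$ to finitely many possibilities, because it does not bound $\ch_0^B(A)$; with $\Pic(X)=\mathbb{Z}H$ the constraint $0\le H^2\ch_1^B(A)\le H^2\ch_1^B(E)$ still allows infinitely many values of $\ch_1^B(A)$ once the rank varies. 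The paper's approach avoids this by exploiting the specific cohomological structure of $E$: taking the long exact sequence of the destabilizing triangle and using that $\hH^{-1}(E)$ is a line bundle and $\hH^0(E)\in\Coh_{\le 1}(X)$, one first bounds the rank of $E_2$ (hence of $E_1$) to a small explicit list, and then in each case either derives an immediate contradiction or compares the growth rates of $\nu_t(E_1)$ (at most linear in $t$ when $r_1=0$) against $\nu_t(E)$ (cubic with positive leading term when $r=-1$) to force $t$ bounded. The missing idea in your sketch is precisely this use of the sheaf-cohomology of $E$ to pin down the possible ranks before invoking any numerical inequality.
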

\begin{proof}
The proofs of (i) and (ii) are similar, so 
we only prove (ii). 
By~\cite[Lemma~7.2.1]{BMT}, there is 
$t_{\rm{PT}}>0$ such that if $t>t_{\rm{PT}}$,
any $\nu_t$-semistable object $E\in \bB_{B, H}$
is such that $\hH^{-1}(E)$ is a torsion free 
$\mu_H$-semistable sheaf, $\hH^0(E) \in \Coh_{\le 1}(X)$
and $\Hom(\Coh_{\le 1}(X), E)=0$. 
The last condition also implies that $\hH^{-1}(E)$ is 
reflexive. Therefore 
$\hH^{-1}(E)$ is a rank one reflexive sheaf, which 
is a line bundle on $X$. 
Then $E$ is written as (\ref{PT:large}) by Lemma~\ref{PT:dual}. 

Conversely, suppose that $E$ is written as (\ref{PT:large})
and $E$ is not $\nu_t$-semistable. 
Since $\ch^{B}_0(E)=-1$, we may assume that $\nu_t(E)>0$. 
There is an exact sequence in $\bB_{B, H}$
\begin{align}\label{long:E}
0 \to E_1 \to E \to E_2 \to 0
\end{align}
such that $\nu_t(E_1) > \nu_t(E_2)$
and $E_1$ is $\nu_t$-semistable. 
We write $\ch^{B}(E_i)=(r_i, D_i, \gamma_i, n_i)$
for $i=1, 2$. 
By Lemma~\ref{PT:dual}, 
the long exact sequence of cohomologies
yields that $\hH^0(E_2) \in \Coh_{\le 1}(X)$, 
hence $r_2 \le 0$.  
Suppose that $r_1>0$. Then by Theorem~\ref{thm:class}
and $\nu_t(E_1)> 0$, we obtain 
\begin{align*}
\ch^{B}_1(E)^2 H \ge 
D_1^2 H \ge 2r_1 \gamma_1 H > \frac{H^3}{3}r_1^2 t^2
\ge \frac{H^3}{3} t^2. 
\end{align*}
The above inequalities are violated if 
$t>\sqrt{3}m$. 
Therefore we may assume that $r_1 \le 0$. 
Since $r_1 +r_2=-1$, there are two 
possibilities: 
$(r_1, r_2)=(-1, 0)$ and $(0, -1)$. 
In the first case, we have $E_2 \in \Coh_{\le 1}(X)$
which implies $\nu_t(E_2)=\infty$. 
This contradicts to $\nu_t(E_1)>\nu_t(E_2)$. 
In the latter case, 
since we have 
\begin{align*}
\nu_t(E_1) = \frac{t(\gamma_1 H)}{D_1 H^2} \ge \nu_t(E)
\end{align*}
and $\nu_t(E)$ is a cubic polynomial in $t$ with 
positive leading term, 
it is enough to give an upper bound on $\gamma_1 H$. 
This is equivalent to giving a lower bound 
of $\gamma_2 H$. 
The long exact sequence of cohomologies associated with 
(\ref{long:E}) implies 
that $\hH^{-1}(E_2) \cong \oO_X(m''H)$
for $m'' >m'$, and since $E_2 \in \bB_{B, H}$, 
the integer $m''$ is also bounded above. 
This implies that $\ch^{B}_2(\hH^{-1}(E_2)[1]) H$ is 
bounded. Also since $\ch^{B}_2(\hH^0(E_2)) H \ge 0$, 
we obtain a lower bound on $\gamma_2 H$. 
\end{proof}
Using the result of the previous lemma, 
we show that the objects contributing to 
non-trivial terms of the RHS of 
(\ref{delta:sum}) are of the form (\ref{DT:large}) or (\ref{PT:large}). 
The following proposition corresponds to 
core dump exponent conjecture in~\cite{DM}:
\begin{prop}\label{prop:core}
Assume that Conjecture~\ref{conj:BMT} is true
and $\eta$ satisfies
(\ref{cond:eta}). 
Let $\delta_{t_{+}}(v_1)\ast \delta_{t_{+}}(v_2)$
be a non-zero term of the RHS of (\ref{delta:sum}). 
Then we have 

(i) $E \in \bB_{B, H}$ is $\nu_{t_{+}}$-semistable 
with $\ch(E)=v_2$ if and only if 
$E$ is written as (\ref{DT:large}). 

(ii) $E \in \bB_{B, H}$ is $\nu_{t_{+}}$-semistable
with $\ch(E)=v_1$ if and only if 
$E$ is written as (\ref{PT:large}). 
\end{prop}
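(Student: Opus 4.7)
The plan is to reduce Proposition~\ref{prop:core} to Lemma~\ref{lem:rankone} by showing that tilt semistability at $t_+$ for the rank $\pm 1$ classes $v_1, v_2$ coincides with tilt semistability for $t \gg 0$. Concretely, I will argue that there are no walls for class $v_i$ ($i=1,2$) in the open region $t > t_0$, so the set of $\nu_t$-semistable objects with $\ch = v_i$ is constant for $t \in (t_0, \infty)$.

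First I would handle case (i). Let $E \in \bB_{B,H}$ be $\nu_{t_+}$-semistable with $\ch(E) = v_2$, and suppose for contradiction that $E$ fails to be $\nu_{t_1}$-semistable for some $t_1 > t_+$. This gives a destabilizing short exact sequence
\[
0 \to E_1 \to E \to E_2 \to 0
\]
in $\bB_{B,H}$ with $\nu_{t_1}(E_1) = \nu_{t_1}(E_2) = \nu_{t_1}(v_2)$. Write $\ch^{B}(E_j) = (a_j, d_j' H, \gamma_j', n_j')$; then $a_1 + a_2 = 1$ and $d_1' + d_2' = d_2$ with $d_j' \ge 0$. The case $d_j' = 0$ gives $\nu_{t_1}(E_j) = \infty$, contradicting semistability at $t_+$, so both $d_j' > 0$. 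Applying the Bogomolov--Gieseker inequality of Theorem~\ref{thm:class} to each $E_j$, combining with the wall equation $\nu_{t_1}(E_j) = \nu_{t_1}(v_2)$, and using the divisor bound $d_2 < (m/2)(1 + 2\eta/3)$ from Proposition~\ref{RHS:delta}(iii) together with $t_1^2 > t_0^2 \ge 3 m^2 (1 - \eta)/4$ from Lemma~\ref{lem:tsmall}, the same numerical manipulation as in Lemma~\ref{wall:S} forces $|a_j| \le 1$ under the smallness hypothesis (\ref{cond:eta}). This leaves $(a_1, a_2) \in \{(0,1), (1,0)\}$; in both subcases a direct analysis of the $t$-dependence of the slopes (the $a_j = 0$ component has $\nu_t(E_j) = H\gamma_j'/(t d_j' H^3)$, whose intersection with $\nu_t(v_2)$ determines a unique candidate $t_1^2$, which once the bounds on $d_2$ and $\gamma_2$ are inserted must fall below $t_0^2$) rules out such a wall above $t_0$.

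Granted the no-walls statement, $E$ is $\nu_t$-semistable for every $t \ge t_+$, in particular for $t > t_{\rm{DT}}$, so Lemma~\ref{lem:rankone}(i) yields $E \cong \oO_X(m'H) \otimes I_C$ for some subscheme $C \subset X$ of dimension $\le 1$. Case (ii) proceeds by the same strategy, replacing Lemma~\ref{lem:rankone}(i) by (ii) and invoking Lemma~\ref{PT:dual} to identify the form (\ref{PT:large}). The converses follow by the same no-walls principle: by Lemma~\ref{lem:rankone}, objects of the forms (\ref{DT:large}) and (\ref{PT:large}) with $\ch = v_i$ are $\nu_t$-semistable for $t \gg 0$, and hence remain $\nu_{t_+}$-semistable.

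I expect the main obstacle to be the quantitative ruling out of destabilizers in the middle paragraph. Structurally the argument mirrors Lemma~\ref{wall:S} and Proposition~\ref{RHS:delta}(iii), but the split is now of $v_i$ rather than $v$, so the available Bogomolov--Gieseker inequality is asymmetric and one must simultaneously exploit both the divisor bound (\ref{eval:d}) and the $t_0^2$ bound from Lemma~\ref{lem:tsmall}. The precise form of the smallness condition $\eta < \min\{3/(2m),\, 1/2\}$, rather than merely $\eta \ll 1$, appears to be essential to rule out rank-$2$ destabilizers and to keep the candidate wall $t_1^2$ below $t_0^2$.
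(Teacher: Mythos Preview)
Your overall strategy---reduce to Lemma~\ref{lem:rankone} by proving there are no walls for $v_i$ in the region $t > t_0$---matches the paper exactly, and your elimination of destabilizers with both components of nonzero rank via Theorem~\ref{thm:class} is essentially the paper's argument (the paper first uses the direction of destabilization at the wall to force $r_S \le 0$, $r_T \ge 1$, but your symmetric version reaches the same endpoint).

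The genuine gap is in the rank-zero subcase. Your assertion that the candidate wall must satisfy $t_1^2 < t_0^2$ is false. From $\nu_{t_0}(v_2) = 0$ one has $H\gamma_2 = t_0^2 H^3/6$, and solving $\nu_{t_1}(E_j) = \nu_{t_1}(v_2)$ for the rank-zero factor $E_j$ gives
\[
t_1^2 \;=\; t_0^2 \;-\; \frac{6\,(H\gamma_j')\,d_2}{d_j'\,H^3}.
\]
But $\nu_{t_1}(E_j) = \nu_{t_1}(v_2) < 0$ for $t_1 > t_0$ forces $H\gamma_j' < 0$, and hence $t_1^2 > t_0^2$: the numerical wall really does lie above $t_0$, so no amount of inserting the bounds on $d_2$ and $\gamma_2$ will push it below. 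The paper's argument is different and hinges on integrality. Since the rank-zero factor has $\ch_0^{B} = 0$, its divisor class is untwisted and $d_j' \in \mathbb{Z}_{\ge 1}$. Then $H\gamma_j' \le 0$ together with (\ref{eval:g}) and Theorem~\ref{thm:class} applied to the rank-one factor squeeze that factor's $d$-coordinate into the interval $\bigl(\tfrac{m}{2}(1-\tfrac{2}{3}\eta),\,\tfrac{m}{2}(1+\tfrac{2}{3}\eta)\bigr)$; combined with (\ref{eval:d}) this yields $\lvert d_j' \rvert < \tfrac{2}{3}m\eta$, and the hypothesis $\eta < 3/(2m)$ in (\ref{cond:eta}) then gives $d_j' < 1$, contradicting $d_j' \in \mathbb{Z}_{\ge 1}$. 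This integrality step is the missing idea, and it is precisely here that the specific constant $3/(2m)$ in (\ref{cond:eta}) is used.
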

\begin{proof}
The proofs of (i) and (ii) are similar, so 
we only prove (i). 
By Proposition~\ref{RHS:delta}, 
we can write $e^{-B}v_2=(1, d_2 H, \gamma_2, s_2)$. 
Hence by 
Lemma~\ref{lem:rankone}, it is enough 
to show that there is no wall 
on $\{ t\in \mathbb{R}_{>0} : t\ge t_0 \}$
with respect to the numerical class $v_2$. 
Suppose the contrary. 
Then there is an $\nu_{t_0}$-semistable 
object $E\in \bB_{B, H}$ with $\ch(E)=v_2$ and 
$t_1\ge t_0$ such that $E$ is $\nu_{t}$-semistable 
for $t\in [t_0, t_1]$ but 
not semistable for $t=t_1 +\varepsilon$
with $0<\varepsilon \ll 1$. 
Note that, since $\nu_{t_0}(E)=0$
and $t_1 \ge t_0$, we have 
$\nu_{t_1}(E) \le 0$. 
There is an exact sequence in $\bB_{B, H}$
\begin{align*}
0 \to S \to E \to T \to 0
\end{align*}
such that
\begin{align}\label{nuAB}
\nu_{t_1}(S)=\nu_{t_1}(T) \le 0, \ 
\nu_{t_1 +\varepsilon}(S)> \nu_{t_1 +\varepsilon}(T),
\end{align}
for $0<\varepsilon \ll 1$. 
Let us write $\ch^{B}(\ast)=(r_{\ast}, d_{\ast}H, \gamma_{\ast}, s_{\ast})$
for $\ast \in \{ S, T \}$. 
By (\ref{nuAB}), we have 
$r_S/d_S<r_T/d_T$,
hence $r_S \le 0$ and $r_T \ge 1$
since $r_S +r_T=1$. 
Also by (\ref{nuAB}), we have 
\begin{align*}
\frac{H^3}{6}\left(\frac{r_T}{d_T}-\frac{r_S}{d_S}  \right)
t_1^2
=\frac{\gamma_T H}{d_T}-\frac{\gamma_S H}{d_S}.
\end{align*}
Suppose that $r_S \le -1$, hence $r_{T} \ge 2$. 
Since $S$ and $T$ are $\nu_{t_1}$-semistable, 
Theorem~\ref{thm:class} together 
with the above equality imply
\begin{align*}
\frac{H^3}{6}\left(\frac{r_T}{d_T}-\frac{r_S}{d_S}  \right)
t_1^2 \le \frac{d_T H^3}{2r_T} - \frac{d_S H^3}{2r_S}, 
\end{align*}
which is equivalent to 
\begin{align}\label{t12}
t_1^2 \le \frac{3d_S d_T}{-r_S r_T}. 
\end{align}
Since $d_S, d_T>0$ and $d_S +d_T=d_2$, 
we have $d_S d_T \le d_2^2/4$. 
Therefore by Lemma~\ref{lem:tsmall}, 
(\ref{eval:d}), (\ref{t12})
and the assumption $r_S \le -1$, $r_T \ge 2$, 
we have 
\begin{align*}
\frac{3}{4}m^2(1-\eta) \le t_0^2
\le t_1^2 \le \frac{3}{32} m^2
\left(1+ \frac{2}{3}\eta  \right)^2,
\end{align*}
The above inequalities contradict to
(\ref{cond:eta}). 
Hence we have $r_S=0$ and $r_T=1$. 

Since $r_S=0$, we have $d_S \in \mathbb{Z}_{\ge 1}$, 
and (\ref{nuAB}) implies $\gamma_S H \le 0$. 
Since $\gamma_2=\gamma_S +\gamma_T$
and $d_2=d_S +d_T$, we have 
\begin{align*}
\frac{H^3}{8}m^2 \left( 1-\frac{2}{3}\eta \right)^2
< \gamma_2 H \le \gamma_T H \le \frac{H^3}{2}d_T^2
\le \frac{H^3}{2}d_2^2 <
\frac{H^3}{8}m^2 \left( 1+\frac{2}{3}\eta \right)^2.
\end{align*}
Here the first inequality follows from 
(\ref{eval:g}), the third inequality follows from 
Theorem~\ref{thm:class} and the last inequality 
follows from (\ref{eval:d}). 
By the above inequalities, we have
\begin{align*}
\frac{m}{2} \left(1-\frac{2}{3}\eta  \right)
< d_T < \frac{m}{2} \left(1+\frac{2}{3}\eta  \right).
\end{align*}
Combined with (\ref{eval:d}),
we obtain that $\lvert d_S \rvert < 2m\eta/3$. 
By the assumption (\ref{cond:eta}), 
we have $\lvert d_S \rvert <1$, which 
contradicts to $d_S \in \mathbb{Z}_{\ge 1}$.  
\end{proof}

\subsection{Numerical classes}
Let $\delta_{t_{+}}(v_1) \ast \delta_{t_{+}}(v_2)$
be a non-zero term of the RHS of (\ref{delta:sum}). 
By the results in the previous subsection, 
the numerical classes $v_1$, $v_2$ are 
written as 
\begin{align}\label{v12}
v_1 =-e^{m_1 H}(1, 0, -\beta_1, -n_1), \
v_2 =e^{m_2 H}(1, 0, -\beta_2, -n_2)
\end{align}
for $m_i \in \mathbb{Z}$, 
$\beta_i$ are Poincar\'e duals of 
homology classes of effective
algebraic one cycles on $X$, 
and $n_i \in \mathbb{Z}$. 
In this subsection, we bound 
$\beta_i$ and $n_i$. 

For $\epsilon>0$,
let $C(m, \epsilon) \subset H_2(X) \oplus H_0(X)$
be the subset defined by (\ref{Cme}). 
The following result corresponds to 
the extreme polar state conjecture in~\cite{DM}:
\begin{prop}\label{prop:extreme}
In the same assumptions 
as Proposition~\ref{prop:core}, 
we write $v_1$ and $v_2$ as (\ref{v12}). 
Then we have 
\begin{align}\notag
(\beta_i, n_i) \in C(m, \eta H^3/2). 
\end{align}
\end{prop}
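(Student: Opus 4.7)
The plan is to translate the numerical constraints of Proposition~\ref{RHS:delta} and Conjecture~\ref{conj:BMT} on the data $(r_i,d_i H,\gamma_i,s_i) = \ch^B(v_i)$ directly into bounds on the pair $(\beta_i,n_i)$ appearing in the presentation (\ref{v12}). First I would unfold the Chern characters. Writing $B = bH$ with $b = -(\beta\cdot H)/(mH^3)$, a short computation gives
\begin{align*}
d_2 &= m_2 - b, \quad \gamma_2 = \tfrac{d_2^2}{2}H^2 - \beta_2, \quad s_2 = \tfrac{d_2^3}{6}H^3 - d_2 H\cdot \beta_2 - n_2,
\end{align*}
and analogously for $v_1$ with the opposite overall sign coming from $r_1=-1$. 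Combined with $v_1+v_2=v$ and $e^{-B}v=(0,mH,0,\tfrac{H^3}{24}m^3(1-\eta))$, this yields the additivity relations $d_1+d_2=m$, $\gamma_1+\gamma_2=0$, and $s_1+s_2=\tfrac{H^3}{24}m^3(1-\eta)$.

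For the bound on $\beta_i \cdot H$, I would solve $\beta_i \cdot H = \tfrac{d_i^2}{2}H^3 - \gamma_i\cdot H$ (up to sign), and feed in the two-sided estimates (\ref{eval:d}) and (\ref{eval:g}). A direct expansion cancels the leading $m^2 H^3/8$ terms, giving $|\beta_i\cdot H| < \tfrac{H^3}{8}m^2\bigl((1+\tfrac{2\eta}{3})^2 - (1-\tfrac{2\eta}{3})^2\bigr) = \tfrac{H^3 m^2\eta}{3} < \tfrac{\eta H^3}{2}m^2$, which is exactly the first component of the condition $(\beta_i,n_i)\in C(m,\eta H^3/2)$.

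For the bound on $n_i$, I would use Conjecture~\ref{conj:BMT}: since $v_1$ and $v_2$ are classes of $\nu_{t_0}$-semistable objects with $\nu_{t_0}(v_i)=0$, we have $s_i \le \tfrac{1}{18}\omega^2 \ch^B_1(v_i) = \tfrac{t_0^2 d_i H^3}{18}$. Adding these and comparing with $s_1+s_2 = \tfrac{H^3 m^3}{24}(1-\eta)$ converts each upper bound into a lower bound for the other $s_j$. Using $\gamma_2\cdot H = \tfrac{H^3 t_0^2}{6}$ together with (\ref{eval:g}) pins $t_0^2$ inside $\bigl(\tfrac{3m^2}{4}(1-\tfrac{2\eta}{3})^2,\ \tfrac{3m^2}{4}(1+\tfrac{2\eta}{3})^2\bigr)$, which localizes each $s_i$ in a small window around $m^3H^3/48$ of width $O(\eta m^3 H^3)$.

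I would then plug these into $n_2 = \tfrac{d_2^3}{6}H^3 - d_2 H\cdot\beta_2 - s_2$ (and the analogous formula for $n_1$): the leading $m^3H^3/48$ term of $\tfrac{d_2^3}{6}H^3$ cancels against $s_2$, and each of the three summands then contributes $O(\eta m^3 H^3)$, giving an explicit bound of the form $|n_i| \le \tfrac{7\eta}{24}H^3 m^3 + O(\eta^2 H^3 m^3) < \tfrac{\eta H^3}{2}m^3$ for $\eta$ in the range allowed by (\ref{cond:eta}). The main obstacle is organizing this cancellation carefully: the two large quantities $d_i^3 H^3/6$ and $s_i$ must be compared to order $\eta$ simultaneously, and the constant $\tfrac{1}{18}$ from Conjecture~\ref{conj:BMT} together with the factor $\tfrac{3}{4}$ from Lemma~\ref{lem:tsmall} must combine to reproduce exactly the coefficient $\tfrac{1}{24}$ appearing in $e^{-B}v$; this alignment is the reason the stronger inequality (\ref{intro:ineq}), rather than the weak version (\ref{weak}), is indispensable.
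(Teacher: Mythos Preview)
Your proposal is correct and uses the same three inputs as the paper --- the estimates (\ref{eval:d}), (\ref{eval:g}) from Proposition~\ref{RHS:delta}, the Bogomolov--Gieseker bound $s_i\le \tfrac{t_0^2 d_i H^3}{18}$ from Conjecture~\ref{conj:BMT}, and the additivity $s_1+s_2=\tfrac{H^3}{24}m^3(1-\eta)$ --- so the argument for $\beta_i\cdot H$ is literally the paper's. For the $n_i$ bound the paper organizes things a little differently: instead of first sandwiching $s_i$ via the $t_0^2$--window from (\ref{eval:g}) and then converting, it eliminates $t_0$ directly from $\nu_{t_0}(v_1)=0$ to get the sharp one-line bound $n_1\le \tfrac{2}{3}d_1\,\beta_1\cdot H<\tfrac{2H^3}{9}\eta m^3$, and then obtains the other direction by estimating $n_1-n_2$ through the cubic identity $d_1^3+d_2^3=m(d_1^2-d_1d_2+d_2^2)$. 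This yields tighter explicit constants ($\tfrac{2}{9}$ and $\tfrac{31}{72}$) than your route, whereas your sandwiching loses a bit of precision; in particular your claimed $\tfrac{7}{24}$ is not quite what falls out, so if you carry out your version you should track the $\eta^2$ terms honestly and verify the final coefficient stays below $\tfrac12$ under (\ref{cond:eta}).
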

\begin{proof}
We first describe the relationship between 
the notations of $v_i$ in Proposition~\ref{RHS:delta}
and (\ref{v12}). 
Let us set $b=-\frac{\beta H}{mH^3}$
so that $B=bH$ as in (\ref{B:choice}).
If we denote $e^{-B}v_i=(r_i, d_i H, \gamma_i, s_i)$
with $r_1=-1$, $r_2=1$ 
as in Proposition~\ref{RHS:delta}, 
we have 
$d_1=b-m_1$, $d_2=m_2-b$ and  
\begin{align*}
(\gamma_1, s_1) &=
\left(
\beta_1-\frac{1}{2}d_1^2 H^2, n_1-d_1 \beta_1 H +\frac{H^3}{6}d_1^3
\right) \\
(\gamma_2, s_2) &=
\left(
-\beta_2+\frac{1}{2}d_2^2 H^2, -n_2-d_2 \beta_2 H +\frac{H^3}{6}d_2^3
\right).
\end{align*}
By (\ref{eval:d}) and (\ref{eval:g}), 
 we can bound $\beta_2 H$ as follows: 
\begin{align*}
\beta_2 H &< \frac{H^3}{2}d_2^2 -\frac{H^3}{8}m^2 \left(1-\frac{2}{3}\eta
\right)^2 \\
&< \frac{H^3}{8}m^2 \left(1+\frac{2}{3}\eta
\right)^2 -\frac{H^3}{8}m^2 \left(1-\frac{2}{3}\eta
\right)^2  \\
&= \frac{H^3}{3}\eta m^2 < \frac{H^3}{2}\eta m^2. 
\end{align*}
A similar computation shows that 
\begin{align}\label{beta1H}
\beta_1 H < \frac{H^3}{3}\eta m^2
< \frac{H^3}{2}\eta m^2. 
\end{align}

Next we bound $\lvert n_i \rvert$. 
Since $\nu_{t_0}(v_1)=0$, we have 
\begin{align*}
\frac{H^3}{6}t_0^2 + \left( \beta_1-\frac{H^2}{2}d_1^2  \right)H=0.
\end{align*}
Since we assume Conjecture~\ref{conj:BMT}, we have 
\begin{align*}
n_1-d_1 \beta_1 H + \frac{H^3}{6}d_1^3 \le 
\frac{H^3}{18}d_1 t_0^2 
= \frac{d_1 H}{3} \left(-\beta_1 + \frac{H^3}{2}d_1^2  \right). 
\end{align*}
Therefore by (\ref{beta1H}) and noting 
$0<d_1<m$,  
we have 
\begin{align}\label{ueval:n1}
n_1 \le \frac{2}{3}d_1 \beta_1 H < \frac{2H^3}{9}\eta m^3< 
\frac{H^3}{2}\eta m^3. 
\end{align}
A similar computation 
also shows that 
\begin{align}\label{n2:similar}
n_2 > -\frac{2H^3}{9}\eta m^3 > -\frac{H^3}{2} \eta m^3. 
\end{align} 

In order to give a lower bound on $n_1$ and an upper bound 
of $n_2$, we bound $n_1-n_2$. 
Since $v_1 +v_2=v$ where $v$ is given by (\ref{v:class}), 
we have 
\begin{align*}
n_1 -n_2 -d_1 \beta_1 H -d_2 \beta_2 H +\frac{H^3}{6}(d_1^3 + d_2^3)
= \frac{H^3}{24}m^3(1-\eta). 
\end{align*}
Since $d_i >0$, $\beta_i H \ge 0$
and $d_1 + d_2=m$, the above equality implies
\begin{align}\label{ineq:n12}
n_1 -n_2 \ge \frac{H^3}{24}m^3(1-\eta)
 -\frac{H^3}{6}m(d_1 ^2 + d_2^2 -d_1 d_2). 
\end{align}
By (\ref{eval:d}), we can bound $d_1^2 + d_2^2 -d_1 d_2$
as 
\begin{align*}
d_1 ^2 + d_2^2 -d_1 d_2 &\le \frac{m^2}{2} \left(1+ \frac{2}{3}\eta \right)^2
-\frac{m^2}{4} \left(1-\frac{2}{3}\eta \right)^2 \\
&=\frac{m^2}{4} \left(\frac{4}{9}\eta^2 + 4\eta +1  \right).
\end{align*}
Combined with the inequality (\ref{ineq:n12}), we obtain 
\begin{align*}
n_1-n_2 &\ge \frac{H^3}{24}m^3(1-\eta)
-\frac{H^3}{24}m^3\left(\frac{4}{9}\eta^2 +4\eta +1 \right) \\
&=-\frac{5H^3}{24}m^3 \eta - \frac{H^3}{54}m^3 \eta^2. 
\end{align*}
Combined with (\ref{n2:similar}), 
we obtain 
\begin{align}\notag
n_1 &> -\frac{5H^3}{24}m^3 \eta - \frac{H^3}{54}m^3 \eta^2
-\frac{2H^3}{9}m^3 \eta \\
\label{leval:n1}
&= -\frac{31}{72}H^3 m^3 \eta - \frac{H^3}{54}m^3 \eta^2
> -\frac{H^3}{2}\eta m^3. 
\end{align}
By (\ref{ueval:n1}) 
and (\ref{leval:n1}), we have $\lvert n_1 \rvert < H^3 \eta m^3/2$. 
A similar computation also shows that
$\lvert n_2 \rvert < H^3 \eta m^3/2$. 
\end{proof}

We also use the following lemma: 
\begin{lem}\label{lem:revise:1}
Suppose that the numerical classes (\ref{v12})
satisfy 
\begin{align}\label{revise:1}
(\beta_i, n_i) \in C(m, \eta H^3/2), \quad 
v_1 + v_2=v. 
\end{align}

(i) We have $m_1<b < m_2$, 
where $B=bH$ in (\ref{B:choice}). 

(ii) There is only a finite number of 
possibilities for $v_1$ and $v_2$. 
\end{lem}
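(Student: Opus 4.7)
The plan is to extract both conclusions from the identity $v_1+v_2=v$ combined with the explicit form (\ref{v12}), using only the hypothesis $(\beta_i,n_i)\in C(m,\eta H^3/2)$ and the fact that each $\beta_i$ is the Poincaré dual of an effective one-cycle, so $\beta_i\cdot H\ge 0$.

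First I would expand the $v_i$ in $H^{\ast}(X,\mathbb{Q})$ and compare the components of $v_1+v_2=v$ degree by degree. The degree-two comparison gives $m_2-m_1=m$, and the degree-four comparison, intersected with $H$ and divided through by $mH^3$, is equivalent to
\[
b-m_1=\tfrac{m}{2}+\tfrac{\beta_1\cdot H-\beta_2\cdot H}{mH^3},\qquad m_2-b=\tfrac{m}{2}+\tfrac{\beta_2\cdot H-\beta_1\cdot H}{mH^3}.
\]
Since $0\le\beta_i\cdot H<\eta H^3m^2/2$, one has $|\beta_1\cdot H-\beta_2\cdot H|/(mH^3)<\eta m/2$, so both right-hand sides are bounded below by $m(1-\eta)/2$. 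In the regime $\eta<1$ in which this lemma is to be applied—recall from Corollary~\ref{cor:conji} that $\eta\ge 0$, while the smallness of $\eta$ required in Proposition~\ref{prop:extreme} (which provides the hypothesis (\ref{revise:1})) is already much stronger than $\eta<1$—this lower bound is strictly positive, proving (i).

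For (ii), part (i) confines $m_1$ to the open interval $(b-m,b)$, which has length $m$ and hence contains only finitely many integers; then $m_2=m_1+m$ is determined. The class $\beta_1\in H_2(X,\mathbb{Z})$ must be represented by an effective one-cycle of $H$-degree less than $\eta H^3m^2/2$, and standard boundedness of the Hilbert scheme of curves of bounded degree on the polarized variety $(X,H)$ forces $\beta_1$ to lie in a finite subset of $H_2(X,\mathbb{Z})$. Finally, $|n_1|<\eta H^3m^3/2$ leaves only finitely many integer values for $n_1$. Thus $v_1$ has finitely many possibilities and $v_2=v-v_1$ is then determined by $v_1$.

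The only step requiring any genuine input is the positivity argument in the first paragraph, which depends on two features of the setup: the effectivity of the $\beta_i$ (to secure $\beta_i\cdot H\ge 0$) and the smallness of $\eta$ (so that $m(1-\eta)/2>0$). Once these are granted, both (i) and (ii) follow by bookkeeping from $v_1+v_2=v$ together with the $C(m,\eta H^3/2)$-bounds, with no further appeal to stability or to Conjecture~\ref{conj:BMT}.
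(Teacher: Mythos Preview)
Your proposal is correct and follows essentially the same approach as the paper's proof. The paper derives $|m_1+m_2-2b|<m$ from the degree-four comparison of $v_1+v_2=v$ intersected with $H$, then combines with $m_2-m_1=m$, while you equivalently solve for $b-m_1$ and $m_2-b$ directly and bound them below by $m(1-\eta)/2$; both arguments rely on $\beta_i\cdot H\ge 0$ (effectivity) together with the upper bound from $C(m,\eta H^3/2)$ and the standing assumption $\eta<1$ coming from (\ref{cond:eta}), and the finiteness in (ii) is handled identically.
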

\begin{proof}
(i) 
The condition $v_1 +v_2=v$ implies that
$m_2-m_1=m$ and
\begin{align}\label{rev2}
\beta_2 - \beta_1 = \frac{H^2}{2} m(m_1 + m_2 -2b). 
\end{align}
By $(\beta_i, n_i) \in C(m, \eta H^3/2)$, 
$\beta_i H \ge 0$ 
and the condition $0 \le \eta <1$
in (\ref{cond:eta}),
we have 
\begin{align}\label{rev3}
\lvert 
\beta_1 H- \beta_2 H \rvert 
 < \frac{\eta H^3 m^2}{2} < \frac{H^3 m^2}{2}. 
\end{align}
By (\ref{rev2}) and (\ref{rev3}),  we obtain the inequality
\begin{align*}
\lvert m_1 + m_2 -2b \rvert <m. 
\end{align*}
The inequality $m_1<b<m_2$
follows from the above 
inequality and $m_2 -m_1=m$. 

(ii) The condition $(\beta_i, n_i) \in C(m, \eta H^3/2)$
and that $\beta_i$ are classes of 
effective algebraic one cycles on $X$
immediately imply that there is only a finite number
of possibilities for $\beta_i$ and $n_i$. 
It is enough to bound $m_1$ and $m_2$. 
By (i) and $m_2-m_1=m$, we have the 
inequalities
\begin{align*}
b-m < m_1 <b, \ b< m_2< b+m. 
\end{align*}
Therefore $m_i$ are also bounded. 

\end{proof}

\subsection{Wall-crossing of DT type invariants}
In this subsection, 
using the 
results of the previous subsections and the 
wall-crossing formula of DT type
invariants in~\cite{JS}, \cite{K-S}, 
we give a formula relating invariants (\ref{DT:0m}), (\ref{inv:I})
and (\ref{inv:P}). 

Suppose that Conjecture~\ref{conj:BMT} is true and 
$\eta$ satisfies (\ref{cond:eta}). 
Let $\delta_{t_{+}}(v_1) \ast \delta_{t_{+}}(v_2)$
be a non-zero term of the RHS of (\ref{delta:sum}), 
and write $v_i$ as (\ref{v12}). 
By Proposition~\ref{prop:core}, $\delta_{t_{+}}(v_i)$
 are written as
\begin{align}\label{d:PT}
\delta_{t_{+}}(v_1)=
\delta_{\rm{PT}}(v_1)
&\cneq \left[ 
\left[P_{-n_1}(X, \beta_1)/\mathbb{C}^{\ast} \right] \to 
\mM \right] \\
\label{d:DT}
\delta_{t_{+}}(v_2)= \delta_{\rm{DT}}(v_2)
&\cneq \left[ 
\left[I_{n_2}(X, \beta_2)/\mathbb{C}^{\ast} \right] \to 
\mM \right]
\end{align}
where $\mathbb{C}^{\ast}$ acts on 
$P_{-n_1}(X, \beta_1)$, $I_{n_2}(X, \beta_2)$
trivially. 
The morphisms in (\ref{d:PT}), (\ref{d:DT})
are given by 
sending a stable pair $(\oO_X \to F)$, 
an ideal sheaf $I_C$ to 
the objects
\begin{align}\label{objects}
\oO_X(m_1 H) \otimes \mathbb{D}(\oO_X \to F)[1], \quad
\oO_X(m_2 H) \otimes I_C
\end{align}
respectively. 
Furthermore $(\beta_i, n_i)$ are elements in 
$C(m, \eta H^3/2)$ by Proposition~\ref{prop:extreme}. 
Conversely if 
we are
 given numerical classes (\ref{v12})
satisfying (\ref{revise:1}), 
then Lemma~\ref{lem:revise:1} (i)
 implies that the objects (\ref{objects})
are objects in $\bB_{B, H}$, 
hence the elements $\delta_{\rm{PT}}(v_1)$, 
$\delta_{\rm{DT}}(v_2)$ are well-defined
as above.  

Therefore under the above situation, 
the formula (\ref{delta:sum})
is written as 
\begin{align*}
\delta_{t_0}(v)=\delta_{t_{+}}(v)
+\sum_{\begin{subarray}{c}
v_1=-e^{m_1 H}(1, 0, -\beta_1, -n_1) \\
v_2=e^{m_2 H}(1, 0, -\beta_2, -n_2) \\
m_i \in \mathbb{Z}, \ 
(\beta_i, n_i) \in C(m, \eta H^3/2) \\
\nu_{t_0}(v_1)=\nu_{t_0}(v_2)=0, \ 
v_1 + v_2=v
\end{subarray}}
\delta_{\rm{PT}}(v_1) \ast \delta_{\rm{DT}}(v_2).
\end{align*}
The sum in
the RHS is a finite sum
by Lemma~\ref{lem:revise:1} (ii). 
A similar argument for the 
 $\nu_{t_{-}}$-stability with $t_{-}=t_{0}-\varepsilon$
for $0<\varepsilon \ll 1$ implies that 
\begin{align*}
\delta_{t_0}(v)=\delta_{t_{-}}(v)
+\sum_{\begin{subarray}{c}
v_1=-e^{m_1 H}(1, 0, -\beta_1, -n_1) \\
v_2=e^{m_2 H}(1, 0, -\beta_2, -n_2) \\
m_i \in \mathbb{Z}, \ 
(\beta_i, n_i) \in C(m, \eta H^3/2) \\
\nu_{t_0}(v_1)=\nu_{t_0}(v_2)=0, \ 
v_1 + v_2=v
\end{subarray}}
\delta_{\rm{DT}}(v_2) \ast \delta_{\rm{PT}}(v_1).
\end{align*}
By taking the difference, we obtain 
\begin{align}\label{wall:hall}
\delta_{t_{+}}(v)-\delta_{t_{-}}(v)
=\sum_{\begin{subarray}{c}
v_1=-e^{m_1 H}(1, 0, -\beta_1, -n_1) \\
v_2=e^{m_2 H}(1, 0, -\beta_2, -n_2) \\
m_i \in \mathbb{Z}, \ 
(\beta_i, n_i) \in C(m, \eta H^3/2) \\
\nu_{t_0}(v_1)=\nu_{t_0}(v_2)=0, \ 
v_1 + v_2=v
\end{subarray}}
[\delta_{\rm{DT}}(v_2), \delta_{\rm{PT}}(v_1)].  
\end{align}
Note that any torsion sheaf is an object in $\bB_{B, H}$. 
Hence the moduli stacks $\mM^{ss}_H(v)$, $\mM^{s}_H(v)$
are open substacks of $\mM$. Therefore 
we can define the following elements: 
\begin{align*}
\delta_H(v) &\cneq 
[\mM^{ss}_{H}(v) \subset \mM] \in H(\bB_{B, H}) \\
\delta^{s}_H(v) &\cneq [\mM^{s}_H(v) \subset \mM] \in H(\bB_{B, H}).  
\end{align*}
Now we take 
$\xi$, $\mu$ and $m(\xi, \mu)$
as in Corollary~\ref{cor:mustable}, 
and assume that $m>m(\xi, \mu)$. 
By replacing $m(\xi, \mu)$, we may assume that 
$m>m(\xi, \mu)$ implies 
\begin{align*}
\frac{\mu}{m^{\xi}} \le \mathrm{min}
\left\{ \frac{3}{2m}, \frac{1}{2} \right\}. 
\end{align*}
Then if 
$0\le \eta< \mu/m^{\xi}$, 
applying the formula (\ref{wall:hall})
from $t_0 \gg 0$ to 
$0< t_0 \ll 1$
and using the results of Corollary~\ref{cor:mustable}, 
Lemma~\ref{lem:obvious},
we obtain the following formula: 
\begin{align}\label{deltas}
\delta_H^{s}(v)=
\delta_{H}(v)= \sum_{\begin{subarray}{c}
v_1=-e^{m_1 H}(1, 0, -\beta_1, -n_1) \\
v_2=e^{m_2 H}(1, 0, -\beta_2, -n_2) \\
m_i \in \mathbb{Z}, \ 
(\beta_i, n_i) \in C(m, \eta H^3/2) \\ 
v_1 + v_2=v, \ 
\nu_t(v_i)=0 \ \mathrm{for} \ \mathrm{some} \ t>0 
\end{subarray}}
[\delta_{\rm{DT}}(v_2), \delta_{\rm{PT}}(v_1)].
\end{align}
Let us set $\delta \cneq \mu H^3 /2$. 
Since $0\le \eta < \mu/m^{\xi}$, 
the condition $(\beta_i, n_i) \in C(m, \eta H^3/2)$
can be replaced by $(\beta_i, n_i) \in C(m, \delta/m^{\xi})$, 
 and by replacing $m(\xi, \mu)$ if necessary,  
the condition 
`$\nu_t(v_i)=0$ for some $t>0$'
in (\ref{deltas}) automatically 
follows from the condition
 $(\beta_i, n_i) \in C(m, \delta/m^{\xi})$. 
Therefore the formula (\ref{deltas}) can be modified by 
\begin{align}\label{form:final}
\delta_H^s(v)=
\delta_{H}(v)= \sum_{\begin{subarray}{c}
v_1=-e^{m_1 H}(1, 0, -\beta_1, -n_1) \\
v_2=e^{m_2 H}(1, 0, -\beta_2, -n_2) \\
m_i \in \mathbb{Z}, \ 
(\beta_i, n_i) \in C(m, \delta/m^{\xi}) \\ 
v_1 + v_2=v
\end{subarray}}
[\delta_{\rm{DT}}(v_2), \delta_{\rm{PT}}(v_1)].
\end{align}
The formula (\ref{form:final}) 
is interpreted as a relationship in a certain 
Lie subalgebra $H^{\rm{Lie}}(\bB_{B, H}) \subset H(\bB_{B, H})$, 
the Lie algebra of virtual indecomposable objects
in~\cite{JS}. 
The result of~\cite[Theorem~5.12]{JS} is that there is a Lie 
algebra homomorphism from $H^{\rm{Lie}}(\Coh(X))$, 
(not $H^{\rm{Lie}}(\bB_{B, H})$,)
to a certain Lie algebra defined by the anti-symmetric 
bilinear form $\chi$ on $H^{\ast}(X, \mathbb{Q})$, 
defined by 
\begin{align*}
&\chi\left((r_1, D_1, \gamma_1, s_1), (r_2, D_2, \gamma_2, s_2)  \right) \\
&=r_1 s_2 -\gamma_2 D_1 + \gamma_1 D_2 -r_2 s_1 + \frac{1}{12}c_2(X)
(r_1 D_2 -r_2 D_1). 
\end{align*}
Note that by the Riemann-Roch theorem and the 
Serre duality, for $E_1, E_2 \in \bB_{B, H}$ we have 
\begin{align*}
\chi(\ch(E_1), \ch(E_2))=&
\dim \Hom(E_1, E_2)-\dim \Ext^1(E_1, E_2) \\
 &+ \dim \Ext^1(E_2, E_1)
-\dim \Hom(E_2, E_1). 
\end{align*}
In order to apply the argument of~\cite[Theorem~5.12]{JS}
to our derived category setting,
we need to know the following local property of the 
 moduli stack of objects in $\bB_{B, H}$. 
Recall that $\mM$ is the moduli stack of objects in $\bB_{B, H}$. 
\begin{conj}\label{conj:CS}
 For
any $[E] \in \mM$, 
let $G$ be a maximal reductive 
subgroup in $\Aut(E)$.
Then there exists a $G$-invariant analytic 
open neighborhood $V$ of $0$ in 
$\Ext^1(E, E)$, 
a $G$-invariant holomorphic function $f\colon V\to \mathbb{C}$
with $f(0)=df|_{0}=0$, and a smooth morphism 
of complex analytic stacks
\begin{align*}
\Phi \colon [\{df=0\}/G] \to \mM,
\end{align*}
of relative dimension $\dim \Aut(E)- \dim G$. 
\end{conj}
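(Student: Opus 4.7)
The plan is to extend the analytic construction of Joyce-Song~\cite[Theorem~5.3]{JS} from coherent sheaves to objects $E \in \bB_{B, H} \subset D^b \Coh(X)$, using the cyclic structure on the deformation complex induced by the Calabi-Yau 3-fold hypothesis. The overall strategy is to realize $\mM$ analytically-locally at $[E]$ as the critical locus of a holomorphic Chern-Simons functional on a finite-dimensional Kuranishi slice inside $\Ext^1(E,E)$.

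First I would resolve $E$ by a bounded complex $\eE^{\bullet}$ of locally free sheaves on $X$ (possible since $X$ is smooth projective), choose Hermitian metrics on each term that are invariant under a chosen maximal compact subgroup of $G\subset \Aut(E)$, and assemble the Dolbeault complex of $\mathcal{E}nd(\eE^{\bullet})$. With its total differential (combining $\bar{\partial}$ with the internal differential of $\eE^{\bullet}$) and the graded commutator bracket, this forms a dg Lie algebra $L^{\bullet}$ whose cohomology is $\bigoplus_i \Ext^i(E,E)$, and whose Maurer-Cartan elements in $L^1$ modulo gauge parameterize deformations of $E$ in $D^b\Coh(X)$. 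Next I would use the Calabi-Yau trivialization $\bigwedge^3 T_X^{\vee}\cong \oO_X$ with its holomorphic volume form $\Omega$ together with Serre duality to define the holomorphic Chern-Simons functional
\[
CS(a)=\int_X \tr\bigl(\tfrac{1}{2}\,a\,\bar{\partial}a+\tfrac{1}{3}\,a\,[a,a]\bigr)\wedge \Omega,
\]
which is $G$-invariant by construction and whose critical points are exactly the Maurer-Cartan solutions.

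Third, standard Hodge-theoretic Kuranishi slicing extracts harmonic representatives and yields a $G$-invariant analytic open neighborhood $V$ of $0$ in $\Ext^1(E,E)$, together with a $G$-equivariant analytic map from $V$ into the Maurer-Cartan locus of $L^{\bullet}$. Restricting $CS$ to $V$ produces the desired $G$-invariant holomorphic function $f\colon V\to \mathbb{C}$; vanishing of the linear part of $CS$ at the harmonic gauge gives $f(0)=df|_{0}=0$, and Maurer-Cartan solutions in $V$ correspond bijectively to points of $\{df=0\}$. Sending such a solution to the associated deformation of $E$ defines the morphism $\Phi$, and the relative dimension equals $\dim \Aut(E)-\dim G$ because the gauge $G$-orbits absorb precisely the infinitesimal automorphisms not already quotiented out by $G$; smoothness of $\Phi$ as a morphism of analytic stacks then follows from a Luna-type slice argument.

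The main obstacle is upgrading this picture from the formal or homotopy-theoretic level to a genuinely analytic statement. For $E\in \Coh(X)$, Joyce-Song work with the Dolbeault operator on sections of a single bundle and apply elliptic estimates directly; for a general object of $D^b\Coh(X)$, one must simultaneously control deformations of the multi-term resolution $\eE^{\bullet}$, verify convergence of the Kuranishi map in analytic (not merely formal) norm, and check compatibility with the analytic structure of $\mM$ at points where $\eE^{\bullet}$ is not quasi-isomorphic to a sheaf. Carrying this out in detail requires extending the elliptic gauge-theoretic machinery to derived deformation problems, which is precisely the program announced by Behrend-Getzler~\cite{BG}, and explains why the full proof is deferred to a future publication.
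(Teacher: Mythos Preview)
The paper does not prove this statement: it is explicitly labeled a \emph{conjecture} (Conjecture~\ref{conj:CS}) and is used throughout as an assumption, with the only justification being that the case $E\in\Coh(X)$ is \cite[Theorem~5.3]{JS} and that an extension has been announced by Behrend--Getzler~\cite{BG}. There is therefore no proof in the paper to compare your proposal against.

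That said, your sketch is a faithful outline of the expected strategy---extending the Joyce--Song analytic Chern--Simons/Kuranishi construction from sheaves to objects in $\bB_{B,H}$ via a dg Lie algebra built from a locally free resolution---and you correctly isolate the genuine difficulty: upgrading the formal or $A_\infty$ deformation theory to an honest analytic statement compatible with the stack structure of $\mM$. This is precisely the content the paper defers to~\cite{BG} and ``a future publication.'' Your proposal should not be read as a proof but as a plausible program; in particular, the convergence of the Kuranishi map and the Luna-type slice argument for complexes are the steps that, as you note, require substantial new analytic input beyond~\cite{JS}.
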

The above conjecture 
is a derived category version of~\cite[Theorem~5.3]{JS}
and proved if $E \in \Coh(X)$ in~\cite[Theorem~5.3]{JS}.
Also a similar result is already announced by Behrend-Getzler~\cite{BG}. 

If we also assume Conjecture~\ref{conj:CS}, 
then we have a
derived category version of~\cite[Theorem~5.12]{JS}. 
Namely let $\Gamma \subset H^{\ast}(X, \mathbb{Q})$
be the finitely generated free abelian group 
defined by 
\begin{align*}
\Gamma \cneq \Imm \left( \ch \colon K(X) 
\to H^{\ast}(X, \mathbb{Q})\right). 
\end{align*} 
The Lie algebra $C(\Gamma)$ is defined by 
\begin{align*}
C(\Gamma) \cneq \bigoplus_{w\in \Gamma}
\mathbb{Q} c_{w}, 
\end{align*}
with Lie bracket given by 
\begin{align*}
[c_{w_1}, c_{w_2}] \cneq (-1)^{\chi(w_1, w_2)}
\chi(w_1, w_2) c_{w_1 +w_2}. 
\end{align*}
Then there is a
 Lie algebra homomorphism 
\begin{align*}
\Upsilon \colon H^{\rm{Lie}}(\bB_{B, H}) 
\to C(\Gamma), 
\end{align*}
sending 
$\delta_H^s(v)=\delta_{H}(v)$, $\delta_{\rm{PT}}(v_1)$
and $\delta_{\rm{DT}}(v_2)$ to 
\begin{align*}
-\DT_{H}(v) c_v, \ 
-P_{-n_1, \beta_1}c_{v_1}, \ 
-I_{n_2, \beta_2}c_{v_2}, 
\end{align*}
respectively. 
Applying $\Upsilon$ to 
the formula (\ref{form:final}), 
we obtain the following result: 
\begin{thm}\label{thm:DTformula}
Suppose that Conjecture~\ref{conj:BMT} and Conjecture~\ref{conj:CS}
are true. 
Take $v=(0, mH, -\beta, -n) \in H^{\ast}(X, \mathbb{Q})$
with $m\in \mathbb{Z}_{\ge 1}$ and define $\eta$ as in (\ref{def:eta}). 
Also take $(\xi, \mu)$ so that 
$\xi>1, \mu>0$ or $\xi=1$, $0< \mu < 3/2$. 
Then there is $m(\xi, \mu)>0$, which depends only 
on $\xi$ and $\mu$ such that if $m>m(\xi, \mu)$
and $0\le \eta< \mu/m^{\xi}$,
by setting $\delta=\mu H^3/2$, 
 we have 
\begin{align}\label{DT:wcross}
\DT_H(v)= \sum_{\begin{subarray}{c}
v_1=-e^{m_1 H}(1, 0, -\beta_1, -n_1) \\
v_2=e^{m_2 H}(1, 0, -\beta_2, -n_2) \\
m_i \in \mathbb{Z}, \ 
(\beta_i, n_i) \in C(m, \delta/m^{\xi}), \\ 
v_1 + v_2=v
\end{subarray}}
(-1)^{\chi(v_2, v_1)-1}
\chi(v_2, v_1) I_{n_2, \beta_2} P_{-n_1, \beta_1}. 
\end{align}
\end{thm}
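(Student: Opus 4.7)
The strategy is to apply the Joyce--Song integration map to the Hall algebra identity (\ref{form:final}) that has already been established in the preceding analysis. Concretely, under Conjecture~\ref{conj:CS}, the argument of \cite[Theorem~5.12]{JS} extends from $\Coh(X)$ to the heart $\bB_{B, H}$, furnishing a Lie algebra homomorphism $\Upsilon \colon H^{\rm{Lie}}(\bB_{B, H}) \to C(\Gamma)$, where $C(\Gamma)$ is the Lie algebra defined in the statement via the anti-symmetric form $\chi$.

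First I would check that each factor appearing in (\ref{form:final}) lies in the subalgebra $H^{\rm{Lie}}(\bB_{B, H})$ of virtually indecomposable elements. For $\delta_{\rm{PT}}(v_1)$ and $\delta_{\rm{DT}}(v_2)$ this holds because the objects in (\ref{objects}) are simple, so their automorphism groups are just $\mathbb{C}^{\ast}$, and these elements are realized as the $\mathbb{C}^{\ast}$-quotients (\ref{d:PT}), (\ref{d:DT}). For $\delta_H^s(v) = \delta_H(v)$, Corollary~\ref{cor:mustable} ensures that every $\mu_{H,2}$-semistable sheaf of class $v$ is in fact stable, so $\mM^{ss}_H(v) = \mM^s_H(v)$ is a $B\mathbb{C}^{\ast}$-gerbe over a quasi-projective scheme, which is again virtually indecomposable.

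Next, evaluating $\Upsilon$ on these three generators yields, by the definition of the integration map and the standard computation of Behrend weights on $\mathbb{C}^{\ast}$-gerbes,
\begin{align*}
\Upsilon(\delta_H(v)) &= -\DT_H(v)\, c_v, \\
\Upsilon(\delta_{\rm{PT}}(v_1)) &= -P_{-n_1, \beta_1}\, c_{v_1}, \\
\Upsilon(\delta_{\rm{DT}}(v_2)) &= -I_{n_2, \beta_2}\, c_{v_2}.
\end{align*}
Applying $\Upsilon$ to (\ref{form:final}) and expanding the bracket in $C(\Gamma)$ via $[c_{w_1}, c_{w_2}] = (-1)^{\chi(w_1, w_2)}\chi(w_1, w_2) c_{w_1 + w_2}$, the three sign factors $(-1)$ from the individual images collapse into the single sign $(-1)^{\chi(v_2,v_1)-1}$, and the identity $v_1 + v_2 = v$ identifies the $c$-coefficient on both sides. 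This produces (\ref{DT:wcross}) at once.

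The main obstacle is the construction of $\Upsilon$ beyond the abelian category $\Coh(X)$, which is precisely what Conjecture~\ref{conj:CS} is designed to supply. Joyce and Song's proof of~\cite[Theorem~5.12]{JS} rests on an analytic-local critical locus description of the moduli stack, which feeds into multiplicativity identities for Behrend functions across the $\ast$-product through the Milnor fibre and vanishing cycle formalism; once such a local model is granted for $\bB_{B,H}$, the remainder of the argument transports verbatim to the derived setting, because the other key input---the Serre duality $\Ext^i(E, F) \cong \Ext^{3-i}(F, E)^{\vee}$ responsible for the antisymmetry of $\chi$---is available on all of $D^b \Coh(X)$.
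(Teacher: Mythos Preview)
Your proposal is correct and follows essentially the same route as the paper: the paper also derives (\ref{DT:wcross}) by applying the Lie algebra homomorphism $\Upsilon \colon H^{\rm{Lie}}(\bB_{B,H}) \to C(\Gamma)$, whose existence is the derived-category analogue of \cite[Theorem~5.12]{JS} granted by Conjecture~\ref{conj:CS}, to the Hall algebra identity (\ref{form:final}), with the images of $\delta_H(v)$, $\delta_{\rm{PT}}(v_1)$, $\delta_{\rm{DT}}(v_2)$ being exactly the ones you wrote. Your added remarks on why each element is virtually indecomposable and on the sign bookkeeping are accurate elaborations of steps the paper leaves implicit.
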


\subsection{The formula for the generating series}
The formula (\ref{DT:wcross})
gives a relationship among invariants
(\ref{DT:0m}), (\ref{inv:I})
and (\ref{inv:P}). 
We finally rearrange the formula (\ref{DT:wcross})
in terms of generating series, and 
prove Theorem~\ref{intro:main}. 

For $\epsilon>0$, we define 
the series 
$\zZ_{\rm{D}6-\overline{\rm{D}6}}^{m, \epsilon}(x, y, z)$
to be
\begin{align}\notag
\zZ_{\rm{D}6-\overline{\rm{D}6}}^{m, \epsilon}(x, y, z) \cneq 
&\sum_{\begin{subarray}{c}m_2-m_1=m \\
m_1, m_2 \in \mathbb{Z}
\end{subarray}}
x^{\frac{H^3}{6}(m_1^3-m_2^3)}
y^{\frac{H^2}{2}(m_1^2-m_2^2)}
z^{\frac{H^3}{6}m^3 + \frac{c_2(X)H}{12}m} \\
&\label{D6antiD6} \quad I^{m, \epsilon}(xz^{-1}, x^{m_2 H}y z^{-mH})
P^{m, \epsilon}(x z^{-1}, x^{-m_1 H}y^{-1} z^{-mH}). 
\end{align}
Although the above sum is an infinite sum, 
the coefficient of each term is a finite sum
and the series 
$\zZ_{\rm{D}6-\overline{\rm{D}6}}^{m, \epsilon}(x, y, z)$
is well-defined. 

We see a relationship between 
$\zZ_{\rm{D}4}^{m}(x, y)$ defined by (\ref{ZD4})
and the series 
$\zZ_{\rm{D}6-\overline{\rm{D}6}}^{m, \epsilon}(x, y, z)$. 
By expanding, $\zZ_{\rm{D}6-\overline{\rm{D}6}}^{m, \epsilon}(x, y, z)$
is written as 
\begin{align*}
\sum_{\begin{subarray}{c}
m_i \in \mathbb{Z}, \ (\beta_i, n_i) \in C(m, \epsilon) \\
i=1, 2, \ m_2-m_1=m
\end{subarray}}
&I_{n_2, \beta_2}P_{-n_1, \beta_1} 
x^{n_2-n_1+m_2 \beta_2 H -m_1 \beta_1 H + \frac{H^3}{6}(m_1^3 -m_2^3)} \\ 
&y^{\beta_2 -\beta_1 + \frac{H^2}{2}(m_1^2 -m_2^2)}
z^{n_1 -n_2 -m(\beta_1 +\beta_2)H
+ \frac{H^3}{6}m^3 + \frac{c_2(X)H}{12}m}.
\end{align*}
On the other hand, in the formula (\ref{DT:wcross}), the 
condition $v_1 + v_2=v$ is 
equivalent to the
following conditions: 
\begin{align*}
&m_2 -m_1 =m, \\
&\beta_2 -\beta_1 + \frac{H^2}{2}(m_1^2 -m_2^2)=\beta, \\
&n_2-n_1+m_2 \beta_2 H -m_1 \beta_1 H + \frac{H^3}{6}(m_1^3 -m_2^3)
=n. 
\end{align*}
Also the Euler pairing $\chi(v_2, v_1)$ is 
computed by 
\begin{align*}
\chi(v_2, v_1)= n_1 -n_2 -m(\beta_1 +\beta_2)H
+ \frac{H^3}{6}m^3 + \frac{c_2(X)H}{12}m. 
\end{align*}
Therefore if we take $\xi$, $\mu$
and $m>m(\mu, \xi)$ as in 
Theorem~\ref{thm:DTformula}, 
the formula (\ref{DT:wcross}) implies the equality, 
\begin{align*}
\zZ_{\rm{D}4}^{m}(x, y)
=\left. \frac{\partial}{\partial z} \zZ_{\rm{D}6-\overline{\rm{D}6}}^{m, \epsilon=\frac{\delta}{m^{\xi}}}(x, y, z) \right|_{z=-1},
\end{align*}
for the terms $x^{n}y^{\beta}$ such that 
$(m, \beta, n)$ satisfies 
$\eta<\mu/m^{\xi}$, where $\eta$ is defined by (\ref{def:eta}). 
The last condition is equivalent to 
\begin{align*}
 -\frac{H^3}{24}m^3 
\left( 1-\frac{\mu}{m^{\xi}}  \right) >
n+ \frac{(\beta \cdot H)^2}{2mH^3}. 
\end{align*}
As a summary, we obtain the 
following result which proves Theorem~\ref{intro:main}: 
\begin{thm}\label{thm:goal}
Let $X$ be a smooth projective Calabi-Yau 3-fold
such that $\Pic(X)$ is generated by $\oO_X(H)$ for 
an ample divisor $H$ in $X$. 
Suppose that Conjecture~\ref{conj:BMT} and 
Conjecture~\ref{conj:CS}
are true. Then for any $(\xi, \mu) \in \mathbb{R}^2$
satisfying $\xi>1, \mu>0$ or $\xi=1$, $0<\mu<3/2$, 
there is $m(\xi, \mu)>0$ which depends only 
on $\xi$, $\mu$ such that if $m>m(\xi, \mu)$, 
by setting $\delta=\mu H^3/2$, 
there is an equality of the generating 
series, 
\begin{align}\label{form:goal}
\zZ_{\rm{D}4}^{m}(x, y)
=\left. \frac{\partial}{\partial z} \zZ_{\rm{D}6-\overline{\rm{D}6}}^{m, \epsilon=\frac{\delta}{m^{\xi}}}(x, y, z) \right|_{z=-1},
\end{align}
modulo terms of $x^n y^{\beta}$ with 
\begin{align}\notag
-\frac{H^3}{24}m^3
 \left(1-\frac{\mu}{m^{\xi}} \right)
\le n +\frac{(\beta \cdot H)^2}{2mH^3}.
\end{align}
\end{thm}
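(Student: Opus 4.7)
The plan is to deduce Theorem~\ref{thm:goal} from Theorem~\ref{thm:DTformula} by a direct coefficient-by-coefficient comparison between the two sides of (\ref{form:goal}). All of the substantive mathematics — constructing the one-parameter family of tilt stability, showing $\mM_t^{ss}(v) = \mM_H^s(v)$ for $t \gg 0$ and $\mM_t^{ss}(v) = \emptyset$ for small $t$ (Lemma~\ref{lem:obvious}), identifying wall-crossing factors with ideal sheaves and derived duals of stable pairs (Proposition~\ref{prop:core}), bounding numerical classes via Conjecture~\ref{conj:BMT} (Proposition~\ref{prop:extreme}), and applying the Joyce-Song integration map under Conjecture~\ref{conj:CS} — has already been packaged into Theorem~\ref{thm:DTformula}. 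What remains is a formal manipulation of generating series.

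First I would expand the definition (\ref{D6antiD6}) as a sum over $(m_1, m_2, \beta_1, \beta_2, n_1, n_2)$ with $m_2 - m_1 = m$ and $(\beta_i, n_i) \in C(m, \epsilon)$, recording the exponents of $x$, $y$, $z$ in each monomial. Differentiating in $z$ and setting $z = -1$ produces, for each term, a factor $c \cdot (-1)^{c-1}$ where $c$ is the $z$-exponent. The central identification to verify is that
\begin{align*}
c = n_1 - n_2 - m(\beta_1 + \beta_2) H + \tfrac{H^3}{6} m^3 + \tfrac{c_2(X) H}{12} m
\end{align*}
coincides with the Euler pairing $\chi(v_2, v_1)$ appearing in (\ref{DT:wcross}), where $v_1 = -e^{m_1 H}(1, 0, -\beta_1, -n_1)$ and $v_2 = e^{m_2 H}(1, 0, -\beta_2, -n_2)$. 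This is a direct computation using the stated bilinear form on $H^{\ast}(X, \mathbb{Q})$ and the fact that $e^{m_i H}$ acts on Chern characters in a controlled way, and it relies crucially on the Calabi-Yau assumption forcing $r_i^2 = 1$ and $\td_1(X) = 0$ so that only the $c_2(X)$ correction survives.

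Next I would match the $x$- and $y$-exponents with $n$ and $\beta$. The condition $v_1 + v_2 = v = (0, mH, -\beta, -n)$ unfolds into $m_2 - m_1 = m$ together with
\begin{align*}
\beta_2 - \beta_1 + \tfrac{H^2}{2}(m_1^2 - m_2^2) &= \beta, \\
n_2 - n_1 + m_2 \beta_2 H - m_1 \beta_1 H + \tfrac{H^3}{6}(m_1^3 - m_2^3) &= n,
\end{align*}
which are exactly the exponents of $y$ and $x$ in the expansion of (\ref{D6antiD6}). Combining with Theorem~\ref{thm:DTformula} and using the cutoff $(\beta_i, n_i) \in C(m, \delta/m^{\xi})$, the coefficient of $x^n y^\beta$ on each side of (\ref{form:goal}) agrees for every $(\beta, n)$ satisfying $0 \le \eta < \mu/m^{\xi}$. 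Translating this inequality on $\eta$ via the definition (\ref{def:eta}) gives precisely the stated exclusion region $-\tfrac{H^3}{24} m^3 (1 - \mu/m^\xi) \le n + (\beta \cdot H)^2/(2 m H^3)$, which is the "modulo" clause.

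The main obstacle, such as it is, is purely the bookkeeping: carrying the twist $B = -(\beta \cdot H)/(m H^3) \cdot H$ through the computation of $\chi(v_2, v_1)$, and being careful that the untwisted Chern character conventions of Theorem~\ref{thm:DTformula} agree with the $B$-twisted conventions used in Propositions~\ref{RHS:delta}-\ref{prop:extreme}. Since both sides of (\ref{form:goal}) are manifestly well-defined as formal series in $x, y$ (each coefficient being a finite sum by Lemma~\ref{lem:revise:1}(ii)), once the coefficient matching is verified the theorem follows immediately.
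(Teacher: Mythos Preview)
Your proposal is correct and follows essentially the same approach as the paper: expand (\ref{D6antiD6}), identify the $z$-exponent with $\chi(v_2, v_1)$, match the $x$- and $y$-exponents with the conditions $v_1+v_2=v$, apply Theorem~\ref{thm:DTformula}, and translate the constraint $\eta<\mu/m^{\xi}$ into the stated exclusion region. The paper carries out exactly this coefficient-by-coefficient comparison in the paragraphs preceding the theorem statement.
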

\begin{rmk}
Even when $\Pic(X)$ is not generated by 
an ample line bundle $\oO_X(H)$, many of the arguments
in this section work. However in this case, 
there is a difficulty in 
evaluating numerical classes of curves, 
which corresponds to Proposition~\ref{prop:extreme}. 
Unfortunately at this moment, we are 
not able to generalize the arguments 
without the condition $\Pic(X)=\mathbb{Z}[\oO_X(H)]$. 
\end{rmk}

\section{Appendix}\label{sec:append}
\subsection{Examples of wall-crossing}\label{subsec:Exam}
We investigate some examples
of wall-crossing phenomena 
in the previous subsection. 
Let us take a smooth member
$P \in \lvert mH \rvert$ 
and a zero dimensional subscheme 
$Z \subset X$ such that $\oO_Z$
has length $N$. 
We consider the object, 
\begin{align}\label{IPZ}
E=i_{\ast}I_{P, Z} \in \Coh_{\le 2}(X), 
\end{align}
where $i\colon P \hookrightarrow X$ is the inclusion
and $I_{P, Z} \subset \oO_P$ is the defining 
ideal of $Z$ in $P$. 
Note that we have 
\begin{align*}
\ch(E)=\left(0, mH, -\frac{H^2}{2}m^2, \frac{H^3}{6}m^3 -N \right).
\end{align*}
Hence we take $B=-mH/2$ and $\ch^{B}(E)$ is 
written as (\ref{calcu}) 
with $\eta$ given by 
\begin{align*}
\eta=\frac{24N}{m^3 H^3}. 
\end{align*}
We assume that $\eta<\mu/m^{\xi}$ and $m>m(\xi, \mu)$
as in Theorem~\ref{thm:goal}.
The object $E$ is $\mu_{H, 2}$-stable, 
hence $\nu_t$-semistable for $t\ge \sqrt{3}m/2$
by Proposition~\ref{prop:larget}. 

Let us look at the $\nu_t$-stability of $E$
for $t<\sqrt{3}m/2$. 
If we denote by $I_{X, Z} \subset \oO_X$
the defining ideal sheaf of $Z$ in $X$, 
we have the distinguished triangle, 
\begin{align}\label{IEO}
I_{X, Z} \to E \to \oO_X(-mH)[1]. 
\end{align}
The above triangle is an exact sequence in 
$\bB_{B, H}$. Moreover we have 
\begin{align*}
\nu_{t_{-}}(I_{X, Z})> \nu_{t_{-}}(\oO_X(-mH)[1]),
\end{align*}
where $t_{-}=\left( \sqrt{3}m/2 \right) -\varepsilon$
for $0< \varepsilon \ll 1$. 
Here we have observed the wall-crossing phenomena: 
the object $E$ is no longer $\nu_t$-semistable 
for $t=t_{-}$. Instead one
might try to flip the sequence (\ref{IEO})
and consider a sequence, 
\begin{align*}
\oO_X(-mH)[1] \to E' \to I_{X, Z}. 
\end{align*}
If
 the above sequence does not split,
then the 
object $E'$
is $\nu_{t_{-}}$-semistable and 
 coincides with an object
considered in~\cite{BBMT} 
up to tensoring a line bundle. 

If we assume Conjecture~\ref{conj:BMT}, 
then $E'$ is not $\nu_t$-semistable 
for $0<t\ll 1$ by Lemma~\ref{lem:tsmall}. 
Hence there should exist
$t' <\sqrt{3}m/2$
such that $E'$ is $\nu_t$-semistable 
for $t\in [t', \sqrt{3}m/2]$
but not $\nu_{t'_{-}}$-semistable 
where $t_{-}'=t'-\varepsilon$
for $0<\varepsilon \ll 1$. 
An argument of~\cite[Proposition~3.3]{BMT} shows 
that a destabilizing sequence of $E'$
with respect to $\nu_{t'_{-}}$-stability 
should be of the following form, 
\begin{align}\label{IE'E}
I_{C} \to E' \to E'',
\end{align}
where $C \subset X$ is a curve in $X$
which contains $Z$. The object $E''$ is of the form
\begin{align*}
E'' \cong \oO_X(-mH) \otimes \mathbb{D}(\oO_X \to F)[1], 
\end{align*}
where $F$ is a pure sheaf supported on $C$
and $\oO_X \to F$ is a PT stable pair. 
In this way, we observe that 
curves in $X$ appear starting from 
the object (\ref{IPZ}). 

By Lemma~\ref{lem:tsmall},
we must have $t'\ge \sqrt{3}m/2 \cdot \sqrt{1-\eta}$. 
Since $\nu_{t'}(I_C)=0$, this condition is equivalent to 
\begin{align}\label{HCN}
mH \cdot C \le 3N. 
\end{align}
Indeed if $N=1$, we can  
find such a curve $C$ without assuming 
Conjecture~\ref{conj:BMT}. 
In turn, the
existence of the curve $C$
can be used to show Conjecture~\ref{conj:BMT} 
for the object $E'$. 
(cf.~\cite[Proposition~4.4]{BBMT}.)
As we explained in~\cite[Proposition~4.4]{BBMT}, 
the curve $C$ is found in 
the proof of Fujita's 
freeness conjecture 
on 3-folds~\cite{EL}, \cite{Kaw97}, \cite{Hel97}, 
and is defined by a
multiplier ideal sheaf of some 
log canonical $\mathbb{Q}$-divisor in $X$. 
There is an 
embedding $I_C \hookrightarrow E'$ 
since the composition 
$I_C \to I_{X, Z} \to \oO_X(-mH)[2]$
vanishes by Nadel's vanishing theorem. 
\begin{rmk}
When $N >1$, we can at least 
find a curve $C \subset X$ 
satisfying (\ref{HCN})
and 
$Z \cap C \neq \emptyset$,
without assuming Conjecture~\ref{conj:BMT}, 
following the proof of~\cite[Theorem~6.1]{Hel97}.
Unfortunately the argument of~\cite[Theorem~6.1]{Hel97}
is not enough to find such a curve $C$
with $Z\subset C$, which is necessary
to solve Conjecture~\ref{conj:BMT}
for the object $E'$. 
\end{rmk}

\subsection{Euler characteristic version}
If we do not assume the inequality in 
 Conjecture~\ref{conj:CS}
and just assume Conjecture~\ref{conj:BMT}, 
we still have a result for 
Euler characteristic invariants, 
which are not weighted by the Behrend function. 
By formally putting $\nu \equiv 1$
in the definitions of (\ref{DT:0m}), (\ref{inv:I})
and (\ref{inv:P}),
where $\nu$ is the Behrend function, we 
can define the Euler characteristic invariants, 
\begin{align}\label{inv:Eu}
\widehat{\DT}_{H}(0, mH, -\beta, -n),  \
\widehat{I}_{n, \beta}, \
\widehat{P}_{n, \beta}. 
\end{align}
By replacing (\ref{DT:0m}), (\ref{inv:I}), (\ref{inv:P}) 
in the generating series (\ref{ZD4}), (\ref{D6antiD6})
by the invariants (\ref{inv:Eu})
respectively, we can define the generating series,
\begin{align*}
\widehat{\zZ}_{\rm{D}4}^{m}(x, y), \quad 
\widehat{\zZ}_{\rm{D}6-\overline{\rm{D}6}}^{m, \epsilon}(x, y, z).
\end{align*}
The following result can be proved 
along with the same proof of Corollary~\ref{cor:conji} and 
Theorem~\ref{thm:goal}. 
The only modification is that we 
use the result of~\cite[Theorem~6.12]{Joy2} 
instead of~\cite[Theorem~5.12]{JS}. 
\begin{thm}\label{thm:goal:euler}
Let $X$ be a smooth projective Calabi-Yau 3-fold
such that $\Pic(X)$ is generated by $\oO_X(H)$ for 
an ample divisor $H$ in $X$. 
Suppose that Conjecture~\ref{conj:BMT}
is true. 
Then we have the following:

(i) The invariant   
$\widehat{\DT}_H(0, mH, -\beta, -n)$
vanishes 
unless 
\begin{align*}
-\frac{H^3}{24}m^3 \le n+ \frac{(\beta \cdot H)^2}{2mH^3}. 
\end{align*}

(ii) For any $(\xi, \mu) \in \mathbb{R}^2$
satisfying $\xi>1, \mu>0$ or $\xi=1$, $0<\mu<3/2$, 
there is $m(\xi, \mu)>0$ which depends only 
on $\xi$, $\mu$ such that if $m>m(\xi, \mu)$, 
by setting $\delta=\mu H^3/2$, 
there is an equality of the generating 
series, 
\begin{align}\notag
\widehat{\zZ}_{\rm{D}4}^{m}(x, y)
=\left. \frac{\partial}{\partial z} \widehat{\zZ}_{\rm{D}6-\overline{\rm{D}6}}^{m, \epsilon=\frac{\delta}{m^{\xi}}}(x, y, z) \right|_{z=1},
\end{align}
modulo terms of $x^n y^{\beta}$ with 
\begin{align}\notag
-\frac{H^3}{24}m^3
 \left(1-\frac{\mu}{m^{\xi}} \right)
\le n +\frac{(\beta \cdot H)^2}{2mH^3}.
\end{align}
\end{thm}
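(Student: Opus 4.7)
The plan is to replay the entire argument of Section~\ref{sec:proof} leading to Theorem~\ref{thm:goal}, isolating exactly where Conjecture~\ref{conj:CS} enters and showing that for the Euler characteristic invariants one can sidestep it by using Joyce's earlier Lie algebra homomorphism from \cite[Theorem~6.12]{Joy2} in place of the Joyce--Song homomorphism of \cite[Theorem~5.12]{JS}.

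For part (i) I would simply repeat the proof of Corollary~\ref{cor:conji}. Non-vanishing of $\widehat{\DT}_H(0, mH, -\beta, -n)$ forces the moduli stack $\mM_H^{ss}(0, mH, -\beta, -n)$ to be non-empty, so there exists an $H$-semistable sheaf $E$ with $\ch(E) = (0, mH, -\beta, -n)$. By Remark~\ref{rmk:check} such $E$ is $\mu_{H,2}$-semistable, hence by Proposition~\ref{prop:larget} it is a $\nu_{\sqrt{3}m/2}$-semistable object of $\bB_{B, H}$ with $\nu_{\sqrt{3}m/2}(E) = 0$. Conjecture~\ref{conj:BMT} applied to $E$ and the choice $B = -\frac{\beta \cdot H}{mH^3} H$ of (\ref{B:choice}) immediately yields $\eta \ge 0$, which is the claimed inequality. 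No Behrend function input enters.

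For part (ii) the first task is to observe that the entire chain of arguments leading to the Hall algebra identity (\ref{form:final}) depends only on the wall-crossing analysis of tilt stability together with Conjecture~\ref{conj:BMT}, and never invokes Conjecture~\ref{conj:CS}. In particular, Proposition~\ref{prop:moduli} shows that the relevant moduli stacks $\mM_t^{ss}(w)$ are Artin stacks of finite type with affine geometric stabilizers, so the Hall algebra elements $\delta_t(w)$, $\delta_{\rm{DT}}(v_2)$, $\delta_{\rm{PT}}(v_1)$ are well-defined and (\ref{form:final}) holds verbatim in $H(\bB_{B, H})$. The second task is then to apply the Lie algebra homomorphism
\begin{align*}
\widehat{\Upsilon} \colon H^{\rm{Lie}}(\bB_{B, H}) \to C(\Gamma)
\end{align*}
from \cite[Theorem~6.12]{Joy2}, which is constructed purely from naive topological Euler characteristics of coarse moduli spaces, with target Lie bracket
\begin{align*}
[c_{w_1}, c_{w_2}] = \chi(w_1, w_2)\, c_{w_1 + w_2}
\end{align*}
without the sign $(-1)^{\chi(w_1, w_2)}$ of the Joyce--Song bracket. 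Under $\widehat{\Upsilon}$ the elements $\delta_H^s(v)$, $\delta_{\rm{DT}}(v_2)$ and $\delta_{\rm{PT}}(v_1)$ map to $-\widehat{\DT}_H(v) c_v$, $-\widehat{I}_{n_2, \beta_2} c_{v_2}$ and $-\widehat{P}_{-n_1, \beta_1} c_{v_1}$ respectively. Applying $\widehat{\Upsilon}$ to (\ref{form:final}) produces the analogue of Theorem~\ref{thm:DTformula} with the weight $(-1)^{\chi(v_2, v_1) - 1}\chi(v_2, v_1)$ replaced by $\chi(v_2, v_1)$.

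Finally, I would repeat the generating series rearrangement at the end of Section~\ref{sec:proof}. In the Joyce--Song case the factor $(-1)^{\chi(v_2, v_1) - 1}\chi(v_2, v_1)$ equals $\frac{\partial}{\partial z}\bigl(z^{\chi(v_2, v_1)}\bigr)\big|_{z = -1}$, since $\chi(v_2, v_1)$ is the exponent of $z$ in (\ref{D6antiD6}); removing the sign amounts to evaluating the same derivative at $z = 1$ instead of $z = -1$, which produces precisely the formula claimed. The main obstacle is purely bookkeeping: one must check that Joyce's construction of $\widehat{\Upsilon}$, originally phrased for moduli of coherent sheaves, carries over to the derived category heart $\bB_{B, H}$ and to the moduli stacks of tilt semistable objects considered here. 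This is in fact unproblematic, because the construction is stack-theoretic and requires only that the ambient abelian category has sufficiently well-behaved moduli of objects and of short exact sequences; in our setting these requirements are supplied by Proposition~\ref{prop:moduli} together with the standard openness of Harder--Narasimhan filtrations for tilt stability, with no analogue of the local critical-locus structure of Conjecture~\ref{conj:CS} needed.
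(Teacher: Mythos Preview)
Your proposal is correct and follows exactly the approach the paper indicates: the paper's own proof is a one-line remark that the argument of Corollary~\ref{cor:conji} and Theorem~\ref{thm:goal} goes through verbatim, with the single substitution of \cite[Theorem~6.12]{Joy2} for \cite[Theorem~5.12]{JS}. Your write-up simply fleshes out this hint, correctly tracking the missing sign $(-1)^{\chi(w_1,w_2)}$ in the Lie bracket to the change $z=-1 \rightsquigarrow z=1$, and is in fact more careful than the paper in flagging that Joyce's Euler-characteristic Lie algebra homomorphism must be checked to apply to the heart $\bB_{B,H}$.
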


\subsection{Evidence of Conjecture~\ref{intro:conj} (i)}
In the situation of Corollary~\ref{cor:conji}, the inequality (\ref{ineq:mH})
can be also written as 
\begin{align}\label{ineq:ch:tor}
\ch_3(E) \le \frac{1}{24} \ch_1(E)^3 + 
\frac{(\ch_1(E) \cdot \ch_2(E))^2}{2(\ch_1(E))^3}. 
\end{align}
Now let $i \colon S \hookrightarrow X$ be a smooth ample divisor
and $E$ is supported on $S$, i.e. 
\begin{align*}
E \in \Coh_S(X) \cneq \{ F \in \Coh(X) : 
\Supp(F) \subset S\}. 
\end{align*}
Note that $E$ may not be an $\oO_S$-module, 
but an $\oO_{S'}$-module for some thickening 
$S \subset S'$. 

The inclusion $\Coh(S) \subset \Coh_S(X)$
induces the isomorphism of $K$-groups
\begin{align*}
K(\Coh(S)) \stackrel{\cong}{\to} K(\Coh_S(X)),
\end{align*} hence 
the class $[E] \in K(\Coh_S(X))$ is 
regarded as an element $[E] \in K(\Coh(S))$. 
Taking its Chern character, we can define 
\begin{align*}
\ch^{S}([E]) =(r, l, s) \in H^0(S) \oplus H^2(S) \oplus H^4(S). 
\end{align*}
It is related to the usual $\ch(E) \in H^{\ast}(X, \mathbb{Q})$
by 
\begin{align}\label{ch:chS}
\ch(E)=\left(0, rS, -\frac{r}{2}S^2 +i_{\ast}l, 
\frac{r}{6}S^3 -\frac{1}{2}S \cdot i_{\ast}l+s \right),
\end{align}
by the Grothendieck Riemann-Roch theorem. Substituting (\ref{ch:chS})
 to (\ref{ineq:ch:tor}),
and writing $\lL \cneq \oO_X(S)|_{S} \in \Pic(S)$,  
we obtain 
\begin{align}\label{ineq:derive}
s \le \frac{\lL^2}{24}(r^3 -r) + \frac{\lL \cdot l}{2r \lL^2}. 
\end{align} 
The above inequality is derived by assuming Conjecture~\ref{conj:BMT}. 
When the formal neighborhood 
$S \subset X$ is isomorphic to 
$S \subset \lvert \lL \rvert$, where 
the latter embedding is the zero section, then 
we can show the inequality (\ref{ineq:derive}), 
(or rather a stronger one,) directly
without assuming Conjecture~\ref{conj:BMT}. 
The following theorem gives an evidence of Conjecture~\ref{intro:conj} (i), 
hence Conjecture~\ref{conj:BMT}:
(note that the inequality (\ref{ineq:derive2}) below
implies (\ref{ineq:derive}) by the Hodge index theorem.)
\begin{thm}\label{thm:append}
Let $S$ be a smooth projective surface over $\mathbb{C}$
and $\lL$ an ample line bundle on $S$. 
Let $Y$ be the total space of the line bundle $\lL$, 
$\pi \colon Y \to S$ the projection, 
and $S$ is considered as a subvariety of $Y$ via 
zero section. Then for any $\pi^{\ast}\lL$-semistable 
pure two dimensional sheaf $E \in \Coh_{S}(Y)$, 
we have the inequality, 
\begin{align}\label{ineq:derive2}
s \le \frac{\lL^2}{24}(r^3 -r) + \frac{l^2}{2r},
\end{align}
where $(r, l, s) =\ch(\pi_{\ast}E)=\ch^{S}([E]) \in H^{\ast}(S, \mathbb{Q})$. 
\end{thm}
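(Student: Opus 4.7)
My approach will be to filter $E$ by powers of the ideal sheaf $\mathfrak{m} := I_S \subset \oO_Y$, which since $N_{S/Y} = \lL$ satisfies $\mathfrak{m}/\mathfrak{m}^2 \cong \lL^{-1}$ as $\oO_S$-modules. The finite decreasing filtration $E = \mathfrak{m}^0 E \supset \mathfrak{m} E \supset \cdots \supset \mathfrak{m}^{n-1} E \supset \mathfrak{m}^n E = 0$ yields coherent $\oO_S$-module successive quotients $F_i := \mathfrak{m}^i E/\mathfrak{m}^{i+1}E$ linked by $\oO_S$-linear surjections $F_{i-1}\otimes \lL^{-1} \twoheadrightarrow F_i$ induced by multiplication by a local generator of $\mathfrak{m}$. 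Since $\pi|_{\Supp E}$ is affine, pushforward preserves the filtration and $\ch(\pi_* E) = \sum_{i=0}^{n-1}\ch(F_i)$. Purity of $E$ on $Y$ forces the deepest layer $\iota_* F_{n-1} = \mathfrak{m}^{n-1}E$ to be pure 2-dim, so $F_{n-1}$ is torsion-free on $S$ of rank $r_{n-1} \ge 1$, and the surjections propagate this to give a non-increasing rank sequence $r_0 \ge r_1 \ge \cdots \ge r_{n-1} \ge 1$ summing to $r$.

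The core of the argument is to apply the classical Bogomolov--Gieseker inequality on $S$ to each $F_i$, obtaining $s_i \le l_i^2/(2r_i)$, and then to sum. This reduces the target to the algebraic claim
\begin{align*}
\sum_{i=0}^{n-1}\frac{l_i^2}{2r_i}\ \le\ \frac{\lL^2}{24}(r^3-r)+\frac{l^2}{2r}.
\end{align*}
Writing $\tilde l_i := l_i - (r_i/r)\,l$ so that $\sum \tilde l_i = 0$, the inequality becomes the weighted-variance bound $\sum \tilde l_i^2/r_i \le \lL^2 r(r^2-1)/12$. The structure maps force $l_i = l_{i-1} - r_{i-1}\lL - \ch_1(K_i)$, where $K_i$ is the (rank $r_{i-1}-r_i$, effective $\ch_1$) kernel of $F_{i-1}\otimes\lL^{-1}\twoheadrightarrow F_i$; this constrains the $l_i$ to deviate from a linear progression $l_0, l_0-r_0\lL, l_0-2r_0\lL, \ldots$ only by effective corrections. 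Combined with the Hodge index theorem on $S$, this yields the desired variance bound; the extremal case $E = \oO_{nS}$ (uniform rank one with $F_i = \lL^{-i}$) saturates via the identity $\sum_{i=0}^{n-1}(i-\tfrac{n-1}{2})^2 = \tfrac{n(n^2-1)}{12}$.

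The principal obstacle is establishing $\mu_\lL$-semistability of each $F_i$ on $S$, which is required to invoke Bogomolov--Gieseker for $F_i$. The $\pi^*\lL$-semistability of $E$ on $Y$ does not pass trivially to semistability of $F_i$, since a destabilizing quotient $F_i \twoheadrightarrow Q$ only yields a quotient $\iota_* Q$ of the subquotient $\mathfrak{m}^i E/\mathfrak{m}^{i+1}E$ rather than of $E$ itself. The remedy is to exploit the Grothendieck--Riemann--Roch identity $\mu_{\pi^*\lL,2}(\iota_* G) = \mu_\lL^S(G)/\lL^2 - 1/2$ applied to the inclusions $\mathfrak{m}^i E \hookrightarrow E$ and the quotients $E \twoheadrightarrow \iota_* F_0$, combined with the chain of slope inequalities $\mu_{\pi^*\lL,2}(\mathfrak{m}^i E) \le \mu_{\pi^*\lL,2}(E) \le \mu_{\pi^*\lL,2}(\iota_* F_0)$ that semistability provides. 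When an $F_i$ fails to be semistable, one replaces it by its Harder--Narasimhan filtration and applies Bogomolov--Gieseker piecewise; the discriminants of the HN factors combine convexly to bound $\sum s_i$, although the precise combinatorics of strict rank decrease $r_{i-1} > r_i$ (when the kernels $K_i$ have positive rank and carry their own Bogomolov contribution) is where the most delicate bookkeeping is concentrated.
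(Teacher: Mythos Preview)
Your $\mathfrak{m}$-adic filtration is a genuinely different decomposition from the paper's, and the obstacle you yourself flag---that the graded pieces $F_i=\mathfrak{m}^iE/\mathfrak{m}^{i+1}E$ need not be $\mu_\lL$-semistable---is a real gap that your sketch does not close. Bogomolov--Gieseker can fail for non-semistable sheaves (e.g.\ $\oO\oplus\oO(-D)$ with $D$ ample), so $s_i\le l_i^2/(2r_i)$ is unavailable. Your proposed remedy of refining each $F_i$ by its HN filtration does not obviously succeed: the resulting filtration of $\pi_*E$ has semistable subquotients but is in general \emph{not} the HN filtration of $\pi_*E$, and the surjections $F_{i-1}\otimes\lL^{-1}\twoheadrightarrow F_i$ only bound $\mu_{\min}(F_i)$ from below without controlling $\mu_{\max}(F_i)$, so the pairwise slope gaps feeding the quadratic sum are uncontrolled. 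The inequalities $\mu_{\pi^*\lL,2}(\mathfrak{m}^iE)\le\mu_{\pi^*\lL,2}(E)$ you invoke constrain only aggregate slopes: a destabilizing $F_i'\subset F_i$ does lift to an $\oO_Y$-submodule $\tilde F_i'\supset\mathfrak{m}^{i+1}E$ of $E$, but $\mu_S(\pi_*\tilde F_i')$ is merely a weighted average of $\mu_S(F_i')$ and $\mu_S(\pi_*\mathfrak{m}^{i+1}E)$, so semistability of $E$ does not force $\mu_S(F_i')\le\mu_S(F_i)$.

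The paper sidesteps all of this by taking the Harder--Narasimhan filtration $0=E_0\subset\cdots\subset E_N=\pi_*E$ on $S$ from the outset, so the $F_i=E_i/E_{i-1}$ are $\mu_\lL$-semistable by construction and Bogomolov--Gieseker applies immediately. The $\oO_Y$-module structure enters not via the $\mathfrak{m}$-adic filtration but only through the ``Higgs field'' $\theta:\pi_*E\to\pi_*E\otimes\lL$: if some $E_i$ were $\theta$-invariant it would correspond to an $\oO_Y$-submodule of $E$ with strictly larger slope, contradicting $\pi^*\lL$-semistability. Hence the induced map $E_i\to(\pi_*E/E_i)\otimes\lL$ is nonzero, which forces $\mu(F_i)\le\mu(F_{i+1})+\lL^2$. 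This single slope-gap bound, together with Hodge index and the elementary estimate $\sum_{i<j}r_ir_j(j-i)^2\le\sum_{1\le i<j\le r}(j-i)^2=r^2(r^2-1)/12$, finishes the proof with no further bookkeeping. The spectral/Higgs viewpoint on $\pi_*E$ is the organizing idea you are missing.
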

\begin{proof}
Let us consider the object $\pi_{\ast}E \in \Coh(S)$ 
and its Harder-Narasimhan filtration 
with respect to $\mu_{\lL}$-stability, 
\begin{align*}
0= E_0 \subset E_1 \subset \cdots \subset E_N=\pi_{\ast}E.
\end{align*}
Here $F_i=E_i/E_{i-1}$ is 
$\mu_{\lL}$-semistable with $\mu_{\lL}(F_i)>\mu_{\lL}(F_{i+1})$
for all $i$. 
Note that the $\oO_Y$-module structure on 
$E$ induces the morphism, 
\begin{align*}
\theta \colon \pi_{\ast}E \to \pi_{\ast}E\otimes \lL. 
\end{align*}
The $\pi^{\ast} \lL$-semistability of $E$
implies that the subsheaf 
$E_i \subset \pi_{\ast}E$ is not preserved by $\theta$. 
Therefore the composition 
\begin{align*}
E_i \to \pi_{\ast}E \stackrel{\theta}{\to} \pi_{\ast}E \otimes \lL 
\to (\pi_{\ast}E/E_i) \otimes \lL,
\end{align*}
is non-zero, which implies 
\begin{align}\label{ineq:mu}
\mu_{\lL}(F_i) \le \mu_{\lL}(F_{i+1} \otimes \lL)
= \mu_{\lL}(F_{i+1}) + \lL^2. 
\end{align}
If we write $\ch(F_i)=(r_i, l_i, s_i)$, 
then the inequality (\ref{ineq:mu}) implies  
\begin{align}\label{ineq:ij}
(j-i) \lL^2 \ge \frac{r_j l_i - r_i l_j}{r_i r_j} \cdot \lL >0,
\end{align}
for any $j>i$.
On the other hand, we have 
$l_i^2 \ge 2r_i s_i$ by Bogomolov-Gieseker inequality.
Therefore we have  
\begin{align}\label{desire1}
s-\frac{l^2}{2r}
\le \sum_{i=1}^{N} \frac{l_i^2}{2r_i} 
- \frac{\left(\sum_{i=1}^{N}l_i\right)^2}{\sum_{i=1}^{N}2r_i}
=\sum_{1\le i<j \le N}
\frac{(r_j l_i -r_i l_j)^2}{2r_i r_j r}. 
\end{align}
By the Hodge index theorem and (\ref{ineq:ij}), 
we have 
\begin{align*}
(r_j l_i -r_i l_j)^2 \le \{ (r_j l_i -r_i l_j) \cdot \lL \}^2 /\lL^2  
\le r_i^2 r_j^2 (j-i)^2 \lL^2. 
\end{align*}
Hence we have
\begin{align}\label{desire2}
\sum_{1\le i<j \le N}
\frac{(r_j l_i -r_i l_j)^2}{2r_i r_j r}
\le \frac{\lL^2}{2r} \sum_{1\le i< j\le N}
r_i r_j(j-i)^2. 
\end{align}
The desired inequality (\ref{ineq:derive2})
follows from (\ref{desire1}), (\ref{desire2})
and noting that
\begin{align*}
\sum_{1\le i<j \le N} r_i r_j(j-i)^2 
\le \sum_{1\le i< j\le r} (j-i)^2
= \frac{1}{12}r^2(r^2-1). 
\end{align*}
\end{proof}
\begin{rmk}
The inequality (\ref{ineq:derive2}) 
is best possible. In fact the equality 
is achieved
when $E$ is a structure sheaf of a 
thickened surface $\oO_{S_m} \cneq
\oO_Y/\oO_Y(-mS)$
for  
$m\in \mathbb{Z}_{\ge 1}$.  
\end{rmk}

Todai Institute for Advanced Studies (TODIAS), 

Kavli Institute for the Physics and Matheamtics of 
the Universe, 

University of Tokyo, 
5-1-5 Kashiwanoha, Kashiwa, 277-8583, Japan.

\textit{E-mail address}: yukinobu.toda@ipmu.jp


\begin{thebibliography}{10}

\bibitem{AM}
E.~Andriyash and G.~Moore.
\newblock Ample {D}4-{D}2-{D}0 decay.
\newblock arXiv:hep-th/0806.4960.

\bibitem{BBMT}
A.~Bayer, A.~Bertram~E. Macri, and Y.~Toda.
\newblock Bridgeland stability conditions on 3-folds {II}: {A}n application to
  {F}ujita's conjecture.
\newblock {\em preprint}.
\newblock arXiv:1106.3430.

\bibitem{BMT}
A.~Bayer, E.~Macri, and Y.~Toda.
\newblock Bridgeland stability conditions on 3-folds {I}:
  {B}ogomolov-{G}ieseker type inequalities.
\newblock {\em J.~Algebraic Geom.~(to appear)}.
\newblock arXiv:1103.5010.

\bibitem{Beh}
K.~Behrend.
\newblock Donaldson-{T}homas invariants via microlocal geometry.
\newblock {\em Ann.~of Math}, Vol. 170, pp. 1307--1338, 2009.

\bibitem{BG}
K.~Behrend and E.~Getzler.
\newblock Chern-{S}imons functional.
\newblock {\em in preparation}.

\bibitem{Brs1}
T.~Bridgeland.
\newblock Stability conditions on triangulated categories.
\newblock {\em Ann.~of Math}, Vol. 166, pp. 317--345, 2007.

\bibitem{BrH}
T.~Bridgeland.
\newblock Hall algebras and curve-counting invariants.
\newblock {\em J.~Amer.~Math.~Soc.~}, Vol.~24, pp. 969--998, 2011.

\bibitem{DM}
F.~Denef and G.~Moore.
\newblock Split states, {E}ntropy {E}nigmas, {H}oles and {H}alos.
\newblock arXiv:hep-th/0702146.

\bibitem{DM07}
E.~Diaconescu and G.~Moore.
\newblock Crossing the {W}all: {B}ranes vs. {B}undles.
\newblock arXiv:hep-th/0706.3193.

\bibitem{EL}
L.~Ein and R.~Lazarsfeld.
\newblock Global generation of pluri canonical and adjoint linear series on
  smooth projective threefolds.
\newblock {\em J.~Amer.~Math.~Soc.~}, Vol.~6, pp. 875--903, 1993.

\bibitem{Fujita}
T.~Fujita.
\newblock On polarized manifolds whose adjoint bundles are not semipositive.
\newblock {\em Adv.~Stud.~Pure Math.~}, Vol.~10, pp. 167--178, 1987.
\newblock Algebraic Geometry, Sendai, 1985.

\bibitem{GGHS05}
D.~Gaiotto, M~Guica, L.~Huang, A.~Simons, A.~Strominger, and X.~Yin.
\newblock {D}4-{D}0 branes on the quintic.
\newblock arXiv:hep-th/0509168.

\bibitem{GSY06}
D.~Gaiotto, A.~Strominger, and X.~Yin.
\newblock The {M}5-brane elliptic genus: {M}odularity and {BPS} states.
\newblock arXiv:hep-th/0607010.

\bibitem{Got}
L.~G$\ddot{\rm{o}}$ttsche.
\newblock The {B}etti numbers of the {H}ilbert scheme of points on a smooth
  projective surface.
\newblock {\em Math.~Ann.~}, Vol. 286, pp. 193--207, 1990.

\bibitem{HRS}
D.~Happel, I.~Reiten, and S.~O. Smal$\o$.
\newblock {\em Tilting in abelian categories and quasitilted algebras}, Vol.
  120 of {\em Mem.~Amer.~Math.~Soc}.
\newblock 1996.


\bibitem{Hel97}
S.~Helmke.
\newblock On {F}ujita's conjecture.
\newblock {\em Duke Math.~J.~}, Vol.~88, pp. 201--216, 1997.



\bibitem{Hu}
D.~Huybrechts and M.~Lehn.
\newblock {\em Geometry of moduli spaces of sheaves}, Vol. E31 of {\em Aspects
  in Mathematics}.
\newblock Vieweg, 1997.

\bibitem{Inaba}
M.~Inaba.
\newblock Toward a definition of moduli of complexes of coherent sheaves on a
  projective scheme.
\newblock {\em J.~Math.~Kyoto Univ.~}, Vol. 42-2, pp. 317--329, 2002.

\bibitem{Joy2}
D.~Joyce.
\newblock Configurations in abelian categories {I}\hspace{-.1em}{I}.
  {R}ingel-{H}all algebras.
\newblock {\em Advances in Math}, Vol. 210, pp. 635--706, 2007.

\bibitem{Joy4}
D.~Joyce.
\newblock Configurations in abelian categories {I}\hspace{-.1em}{V}.
  {I}nvariants and changing stability conditions.
\newblock {\em Advances in Math}, Vol. 217, pp. 125--204, 2008.

\bibitem{JS}
D.~Joyce and Y.~Song.
\newblock A theory of generalized {D}onaldson-{T}homas invariants.
\newblock {\em Memoirs of the A.~M.~S.~(to appear)}.
\newblock arXiv:0810.5645.

\bibitem{Kaw97}
Y.~Kawamata.
\newblock On {F}ujita's freeness conjecture for 3-folds and 4-folds.
\newblock {\em Math.~Ann.~}, Vol. 308, pp. 491--505, 1997.

\bibitem{K-S}
M.~Kontsevich and Y.~Soibelman.
\newblock Stability structures, motivic {D}onaldson-{T}homas invariants and
  cluster transformations.
\newblock {\em preprint}.
\newblock arXiv:0811.2435.

\bibitem{Langer}
A.~Langer.
\newblock Semistable sheaves in positive characteristic.
\newblock {\em Ann.~of Math.~}, Vol. 159, pp. 251--276, 2004.

\bibitem{Langer2}
A.~Langer.
\newblock Moduli spaces of sheaves and principal {$G$}-bundles.
\newblock {\em Proc.~Sympos.~Pure Math.~}, Vol.~80, pp. 273--308, 2009.

\bibitem{LIE}
M.~Lieblich.
\newblock Moduli of complexes on a proper morphism.
\newblock {\em J.~Algebraic Geom.~}, Vol.~15, pp. 175--206, 2006.

\bibitem{MNOP}
D.~Maulik, N.~Nekrasov, A.~Okounkov, and R.~Pandharipande.
\newblock Gromov-{W}itten theory and {D}onaldson-{T}homas theory. {I}.
\newblock {\em Compositio.~Math}, Vol. 142, pp. 1263--1285, 2006.

\bibitem{OSV}
H.~Ooguri, A.~Strominger, and C.~Vafa.
\newblock Black hole attractors and the topological string.
\newblock {\em Phys.~Rev.~D}, Vol.~70, , 2004.
\newblock arXiv:hep-th/0405146.

\bibitem{PT}
R.~Pandharipande and R.~P. Thomas.
\newblock Curve counting via stable pairs in the derived category.
\newblock {\em Invent.~Math.~}, Vol. 178, pp. 407--447, 2009.

\bibitem{StTh}
J.~Stoppa and R.~P. Thomas.
\newblock Hilbert schemes and stable pairs: {GIT} and derived category wall
  crossings.
\newblock {\em Bull.~Soc.~Math.~France}, Vol. 139, pp. 297--339, 2011.

\bibitem{SV}
A.~Strominger and C.~Vafa.
\newblock Microscopic origin of the {B}ekenstein-{H}awking entropy.
\newblock {\em Phys.~Lett.~B}, Vol. 379, , 1996.
\newblock arXiv:hep-th/9601029.

\bibitem{Thom}
R.~P. Thomas.
\newblock A holomorphic {C}asson invariant for {C}alabi-{Y}au 3-folds and
  bundles on ${K3}$-fibrations.
\newblock {\em J.~Differential.~Geom}, Vol.~54, pp. 367--438, 2000.

\bibitem{Tst3}
Y.~Toda.
\newblock Moduli stacks and invariants of semistable objects on {K}3 surfaces.
\newblock {\em Advances in Math}, Vol. 217, pp. 2736--2781, 2008.

\bibitem{Tolim}
Y.~Toda.
\newblock Limit stable objects on {C}alabi-{Y}au 3-folds.
\newblock {\em Duke Math.~J.~}, Vol. 149, pp. 157--208, 2009.

\bibitem{Tcurve1}
Y.~Toda.
\newblock Curve counting theories via stable objects~{I}: {DT/PT}
  correspondence.
\newblock {\em J.~Amer.~Math.~Soc.~}, Vol.~23, pp. 1119--1157, 2010.

\bibitem{Tolim2}
Y.~Toda.
\newblock Generating functions of stable pair invariants via wall-crossings in
  derived categories.
\newblock {\em Adv.~Stud.~Pure Math.~}, Vol.~59, pp. 389--434, 2010.
\newblock New developments in algebraic geometry, integrable systems and mirror
  symmetry (RIMS, Kyoto, 2008).

\end{thebibliography}
\end{document}